\definecolor{DarkBlue}{rgb}{0,0,0.8} 
\definecolor{DarkGreen}{rgb}{0,0.5,0.0} 
\definecolor{DarkRed}{rgb}{0.9,0.0,0.0} 
\numberwithin{equation}{section}
\newtheorem{theorem}[equation]{Theorem}
\newtheorem{lemma}[equation]{Lemma}
\newtheorem{cor}[equation]{Corollary}
\newtheorem{prop}[equation]{Proposition}
\theoremstyle{definition}
\newtheorem{open}[equation]{Question}
\newcommand{\ZZ}{\mathbf{Z}}
\newcommand{\NN}{\mathbf{N}}
\newcommand{\RR}{\mathbf{R}}
\newcommand{\N}{N}
\newcommand{\po}[2]{\mathfrak{po}^{#1|#2}}
\newcommand{\on}{\operatorname}
\newcommand{\Img}{\on{Im}}
\newcommand{\Q}{\overline{q}}
\newcommand{\w}{\on{weight}}
\newcommand{\smon}{\mathbf{SMon}}
\newcommand{\clif}{\on{clif}}
\newcommand{\cl}{\mathbf{Cl}}
\newcommand{\inc}{\on{inc}}
\newcommand{\cut}[4]{#1 = #2 \amalg_{#4} #3}
\newcommand{\piece}[3]{#1_{#3}}
\newcommand{\wt}{\on{wt}}
\title{Adinkras for Mathematicians}
\author{Yan X Zhang \\ Massachusetts Institute of Technology}
\date{\today}
\begin{document}

\maketitle

\begin{abstract}
\emph{Adinkras} are graphical tools created for the study of representations in supersymmetry. Besides having
inherent interest for physicists, adinkras offer many easy-to-state and accessible mathematical problems of algebraic, combinatorial, and computational nature. We use a more mathematically natural language to survey these topics, suggest new definitions, and present original results.
\end{abstract}

\section{Introduction}

In a series of papers starting with \cite{d2l:first}, different subsets of the ``DFGHILM collaboration'' (Doran, Faux, Gates, H\"{u}bsch, Iga, Landweber, Miller) have built and extended the machinery of \emph{adinkras}. Following the spirit of Feyman diagrams, adinkras are combinatorial objects that encode information about the representation theory of supersymmetry algebras. Adinkras have many intricate links with other fields such as graph theory, Clifford theory, and coding theory. Each of these connections provide many problems that can be compactly communicated to a (non-specialist) mathematician.
 
This paper is a humble attempt to bridge the language gap and generate communication. We redevelop the foundations in a self-contained manner in Sections~\ref{sec:adinkras} and \ref{sec:topologies}, using different definitions and constructions that we consider to be more mathematically natural for our purposes. Using our new setup, we prove some original results and make new interpretations in Sections~\ref{sec:dashing} and \ref{sec:ranking}. We wish that these purely combinatorial discussions will equip the readers with a mental model that allows them to appreciate (or to solve!) the original representation-theoretic problems in the physics literature. We return to these problems in Section~\ref{sec:together} and reconsider some of the foundational questions of the theory.

\section{Definitions}
\label{sec:adinkras}

We assume basic notions of graphs. For a graph $G$, we use $E(G)$ to denote the edges of $G$ and $V(G)$ to denote the vertices of $G$. We deviate from the original literature\footnote{In the existing literature, the notion of posets is created from scratch without ever being referred to as such. We make a more compact presentation, using preexisting language when possible. Furthermore, the existing literature defines adinkras first and then defines topologies and chromotopologies as coming from adinkras (i.e. what we call \emph{adinkraizable}). We modualarize the data and take a ``ground-up'' approach instead.}, but we believe our approach yields slightly cleaner and more general mathematics while getting to the same destinations.

We assume most basic notions of posets (there are many references, including \cite{stanley:ec1}). In this paper, we think of each Hasse diagram for a poset as a directed graph, with $x \to y$ an edge if $y$ covers $x$. Thus it makes sense to call the maximal elements (i.e. those $x$ with no $y > x$) \emph{sinks} and the minimal elements \emph{sources}. 

A \emph{ranked} poset\footnote{This is also often called a \emph{graded} poset, though there are similar but subtly different uses of that name. For this paper, we use \emph{ranked} to avoid ambiguity.} is a poset $A$ equipped with a rank function $h\colon A \to \ZZ$ such that for all $x$ covering $y$ we have $h(x) = h(y) + 1$. There is a unique rank function $h_0$ among these such that $0$ is the lowest value in the range of $h_0$, so it makes sense to define the \emph{rank} of an element $v$ as $h_0(v)$. The largest element in the range of $h_0$ is then the length of the longest chain in $A$; we call it the \emph{height} of $A$.

\subsection{Topologies, Chromotopologies, and Adinkras}
\label{sec:definitions}

An $n$-dimensional \emph{adinkra topology}, or \emph{topology} for short, is a finite connected simple graph $A$ such that $A$ is bipartite and \emph{$n$-regular} (every vertex has exactly $n$ incident edges). We call the two sets in the bipartition of $V(A)$ \emph{bosons} and \emph{fermions}, though the actual choice is mostly arbitrary and we do not consider it part of the data.

A \emph{chromotopology} of dimension $n$ is a topology $A$ such that the following holds.
\begin{itemize}
\item The elements of $E(A)$ are colored by $n$ colors, which are elements of the set $[n] = \{1,2,\ldots, n\}$ unless denoted otherwise, such that every vertex is incident to exactly one edge of each color. 
\item For any distinct $i$ and $j$, the edges in $E(A)$ with colors $i$ and $j$ form a disjoint union of $4$-cycles.  
\end{itemize}

There are further properties we can put on a chromotopology:
\begin{enumerate}
\item {\bf ranked}: we give $A$ additional structure of a ranked poset\footnote{In the existing literature, such as \cite{d2l:graph-theoretic}, these adinkras are constructed as part of the data of the graph, using directed edges to encode cover relations (the graph with the poset structure is called \emph{engineerable}), developing the notion of posets from scratch without ever referring to them as such. In this paper, we separate the poset structure from the graph and refer to the well-developed notion of \emph{ranked poset}.}, with rank function $h_A$ (though we will usually just write $h$). By this, we mean that we identify $A$ with the Hasse diagram of some ranked poset and assign the corresponding ranks to $V(A)$. In this paper, such as in Figure~\ref{fig:3cube}, we will usually represent ranks via vertical placement, with higher $h$ corresponding to being higher on the page.
\item {\bf dashed}: we add an \emph{odd dashing} $A$, which is a map $d\colon E(A) \to \ZZ_2$ such that the sum of $d(e)$ as $e$ runs over each $2$-colored $4$-cycle (that is, a $4$-cycle of edges that use a total of $2$ colors) is $1 \in \ZZ_2$. Visually, we can think of the odd dashing as making each edge of $A$ either \emph{dashed} or \emph{solid}, such that every $2$-colored $4$-cycle contains an odd number of dashes. We will slightly abuse notation and write $d(v,w)$ to mean $d((v,w))$, where $(v,w)$ is an edge from $v$ to $w$.
\end{enumerate}
An \emph{adinkra} is a chromotopology with both of these properties: i.e. a dashed ranked chromotopology. We call a topology or chromotopology that can be made into an adinkra \emph{adinkraizable}.

\begin{figure}[htb]
\begin{center}

\begin{tikzpicture}[scale=0.15]
\SetUpEdge[labelstyle={draw}]
\Vertex[x=0,y=0]{000}
\Vertex[x=0,y=10]{010}
\Vertex[x=0,y=20]{101}
\Vertex[x=0,y=30]{111}
\Vertex[x=-10,y=10]{100}
\Vertex[x=10,y=10]{001}
\Vertex[x=-10,y=20]{110}
\Vertex[x=10,y=20]{011}
\draw (20, 0) node []{$h = 0$};
\draw (20, 10) node []{$h = 1$};
\draw (20, 20) node []{$h = 2$};
\draw (20, 30) node []{$h = 3$};
\Edge[color=red](100)(101)
\Edge[color=red](000)(001)
\Edge[color=red](010)(011)
\Edge[color=red](110)(111)
\Edge[color=green](000)(100)
\Edge[color=green, style=dashed](001)(101)
\Edge[color=green](010)(110)
\Edge[color=green, style=dashed](011)(111)
\Edge[color=blue](000)(010)
\Edge[color=blue, style=dashed](001)(011)
\Edge[color=blue, style=dashed](100)(110)
\Edge[color=blue](101)(111)
\end{tikzpicture}

\caption{An adinkra with vertices labeled by $3$ bits. We can take the bosons to be either $\{000,011,101,110\}$ or $\{001,010,100,111\}$ and take the fermions to be the other set. \label{fig:3cube}}
\end{center}
\end{figure}
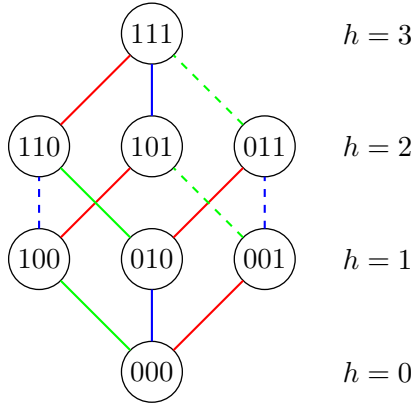

Many of our proofs involve algebraic manipulation. To make our treatment more streamlined, we now set up algebraic interpretations of our definitions.

\begin{itemize}
\item The condition of $A$ being a chromotopology is equivalent to having a map $q_i \colon V(A) \to V(A)$ for every color $i$ that sends each vertex $v$ to the unique vertex connected to $v$ by the edge with color $i$, such that the different $q_i$ commute (equivalently, the $q_i$ generate a free $\ZZ_2^n$ action on $V(A)$. The well-definedness of the $q_i$ corresponds to the edge-coloring condition and the commutation requirement corresponds to the $4$-cycle condition. Note that $q_i$ is an involution, as applying $q_i$ twice simply traverses the same edge twice. Furthermore, $q_i$ sends any boson to a fermion, and vice-versa.
\item The condition of a chromotopology $A$ being dashed (with dashing function $d$) is equivalent to having the maps $\Q_i$ anticommute, where we define $\Q_i\colon \RR[V(A)] \to \RR[V(A)]$ for every color $i$ by $\Q_i(v) = d(v, q_i(v)) q_i(v)$.
\end{itemize}

Finally, we define a few forgetful functions in the intuitive way: for any (possibly ranked and/or dashed) chromotopology $A$, we will use ``the chromotopology of $A$'' to mean the vertex- and edge-colored graph of $A$, forgetting all other information. We similarly say ``the topology of $A$'' to mean the graph of $A$ with no colors of any sort.

\subsection{Multigraphs}

In Section~\ref{sec:graph quotients}, multigraphs come up naturally. Thus, we generalize some of our definitions to multigraphs.

Let a \emph{pretopology} be an $n$-regular finite connected multigraph (that is, we allow loops and multiple-edges). Let a \emph{prechromotopology} be a generalization of \emph{chromotopology} where the graph is allowed to be a pretopology. The condition is still that the $q_i$ must commute. However, the combinatorial version of the rule (that the union of edges of different colors $i$ and $j$ form a disjoint union of $4$-cycles) must be extended to allow degenerate $4$-cycles that use double-edges or loops. Define the \emph{dashed} and \emph{ranked} properties on prechromotopologies and \emph{preadinkras} analogously, again extending our condition for $2$-colored $4$-cycles to allow double-edges and loops.

The double-edges actually do not introduce any new dashed prechromotopologies (and thus preadinkras), because the existence of a double-edge immediately gives a degenerate $4$-cycle, and the sum of dashes over a degenerate $4$-cycle must be even. Loops, however, create new preadinkras. Figure~\ref{fig:preadinkra} gives an example.

\begin{figure}[htb]
\begin{center}

\begin{tikzpicture}[scale=0.10]
\SetUpEdge[labelstyle={draw}]
\Vertex[x=0,y=0]{0}
\Vertex[x=0,y=10]{1}
\Loop[dist=10cm, color=red, dir=SO, style=solid](0)
\Loop[dist=10cm, color=red, dir=NO, style=dashed](1)
\Edge[color=green, style={bend left}](0)(1)
\Edge[color=green, style={bend right}](0)(1)
\end{tikzpicture}

\caption{A preadinkra with two loops. \label{fig:preadinkra}}
\end{center}
\end{figure}
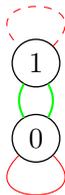

\section{Physical Motivation}
\label{sec:physical motivation}

The reader is already equipped to understand the rest of the paper (with the exception of Section~\ref{sec:together}) with no knowledge from this section. However, we hope our brief outline will serve as enrichment that may provide some additional intuition, as well as provide a review of the original problems of interest (where much remains to be done). While knowledge of physics will help in understanding this section, it is by no means necessary. We have neither the space nor the qualification to give a comprehensive review, so we encourage interested readers to explore the original physics literature.

The physics motivation for adinkras is the following: ``we want to understand off-shell representations of the $\N$-extended Poincar\'{e} superalgebra in the $1$-dimensional worldline.'' There is no need to understand what all of these terms mean\footnote{The author certainly does not.} to appreciate the rest of the discussion; we now sketch the thinking process that leads to adinkras.

Put simply, we are looking at the representations of the algebra $\po{1}{N}$ generated by $\N+1$ generators $Q_1, Q_2, \ldots, Q_\N$ (the \emph{supersymmetry generators}) and $H = i\partial_t$ (the \emph{Hamiltonian}), such that
\begin{align*}
\{Q_I, Q_J\} & = 2 \delta_{IJ}H, \\
[Q_I, H] & = 0.
\end{align*} 
Here, $\delta$ is the Kronecker delta, $\{A, B\} = AB + BA$ is the anticommutator, and $[A,B] = AB-BA$ is the commutator. We can also say that $\po{1}{N}$ is a \emph{superalgebra} where the $Q_i$'s are \emph{odd} generators and $H$ is an \emph{even} generator. Since $H$ is basically a time derivative, it changes the \emph{engineering dimension} (physics units) of a function $f$ by a single power of time when acting on $f$.

Consider $\RR$-valued functions $\{\phi_1, \ldots, \phi_m\}$ (the \emph{bosonic fields} or \emph{bosons}) and $\{\psi_1, \ldots, \psi_m\}$ (the \emph{fermionic fields} or \emph{fermions}), collectively called the \emph{component fields}\footnote{The fact that the two cardinalities match come from the physics assumption that the representations are \emph{off-shell}; i.e. the component fields do not obey other differential equations.}. We want to understand representations of $\po{1}{N}$ acting on the following infinite basis: 
$$\{H^k\phi_I, H^k\psi_J \mid k \in \NN; I, J \leq m\}.$$
There's a subtlety here, as these infinite-dimensional representations are frequently called ``finite-dimensional'' by physicists, who would just call the $\{\phi_I\}$ and the $\{\psi_I\}$ as the ``basis,'' emphasizing the finiteness of $m$. A careful treatment of this is given in \cite{d2l:supermodules}.) A long-open problem is:

\begin{open}
\label{que:all classification}
What are all such ``finite-dimensional'' representations of $\po{1}{N}$? What if we extend to higher dimensions (we will explain what this means to Section~\ref{sec:extensions})?
\end{open}

In particular, we want to understand representations of $\po{1}{N}$ satisfying some physics restrictions (most importantly, having the supersymmetry generators send bosons to only fermions, and vice-versa; this kind of ``swapping symmetry'' is what supersymmetry tries to study). Understanding all such representations seems intractible, so we restrict our attention to representations where the supersymmetry generators act as permutations (up to a scalar) and also possibly the Hamiltonian $H = i\partial_t$ on the basis fields: we require that for any boson $\phi$ and any $Q_I$,
$$ Q_I\phi = \pm (-iH)^s \psi = \pm (\partial_t)^s \psi,$$
where $s \in \{0, 1\}$, the sign, and the fermion $\psi$ depends on $\phi$ and $I$. We enforce a similar requirement
$$ Q_I\psi = \pm i(-iH)^s \phi = \pm i (\partial_t)^s \phi $$
for fermions.  We call the representations corresponding to these types of actions \emph{adinkraic representations}. For each of these representations, we associate an \emph{adinkra}. We now form a correspondence between our definition of adinkras in Section~\ref{sec:definitions} and our definition for adinkraic representations.

\begin{center}
\begin{tabular}{c|c}
adinkras & representations \\
\hline
vertex bipartition & bosonic/fermionic bipartition \\
colored edges and $q_I$ & action of $Q_I$ without the sign or powers of $(-iH)$ \\
dashing / sign in $\Q_I$ & sign in $Q_I$ \\
change of rank by $q_I$ and $\Q_I$ & powers of $(-iH)$ in $Q_I$ \\
rank function & partition of fields by engineering dimension
\end{tabular}
\end{center}

To summarize, 

\begin{framed}
\noindent An adinkra encodes a representation of $\po{1}{N}$. An adinkraic representation is a representation of $\po{1}{N}$ that can be encoded into an adinkra.
\end{framed}

So instead of attacking Question~\ref{que:all classification} head-on, we focus on the following problem instead:
\begin{open}
\label{que:adinkraic classification}
What are all the adinkraic representations of $\po{1}{N}$?
\end{open}

The set of adinkraic representations is already rich enough to contain representations of interest. When the poset structure of our adinkra $A$ is a boolean lattice, we get what \cite{d2l:topology} calls the \emph{exterior supermultiplet}, which coincides with the classical notion of \emph{superfield} introduced in \cite{salam:super-gauge}. When the poset of $A$ is a height-$2$ poset (in which case we say that $A$ is a \emph{valise}), we get \cite{d2l:topology}'s \emph{Clifford supermultiplet}. By direct sums, tensors, and other operations familiar to the Lie algebras setting, it is possible to construct many more representations (see \cite{d2l:topology} and \cite{d2l:clifford}), a technique that has been extended to higher dimensions in \cite{hubsch:weaving}.

\section{Topologies and Chromotopologies}
\label{sec:topologies}

In this section, we study prechromotopologies, chromotopologies, and adinkraizable chromotopologies. Compared to the relevant sections of \cite{d2l:topology} and \cite{d2l:clifford}, our approach is more general, though we owe many ideas to the original work. There is a pleasant connection to codes and Clifford algebras. 

We now give a quick review of codes (there are many references, including \cite{huffman}). See Appendix~\ref{app:clifford} for a review of Clifford algebras and some related results we will need. An $n$-\emph{bitstring} is a vector in $\ZZ_2^{n}$, which we usually write as $b_1b_2\cdots b_n$, $b_i \in \ZZ_2$. We distinguish two $n$-bitstrings ${\overrightarrow{1_n}} = 11\ldots 1$ and ${\overrightarrow{0_n}} = 00\ldots 0$, and when $n$ is clear from context we suppress the subscript $n$. The number of $1$'s in a bitstring $v$ is called the \emph{weight} of the string, which we denote by $\wt(v)$. We use $\overline{v}$ to denote the bitwise complement of $v$, which reverses $0$'s and $1$'s. For example, $\overline{00101} = 11010$. An $(n,k)$-\emph{linear binary code} (for this paper, we will not talk about any other kind of codes, so we will just say \emph{code} for short) is a $k$-dimensional $\ZZ_2$-subspace of bitstrings. A code is \emph{even} if all its bitstrings have weight divisible by $2$ and \emph{doubly even} if all its bitstrings have weight divisible by $4$.




We now introduce the key running example throughout our paper. Define the $n$-dimensional \emph{hypercube} to be the graph with $2^n$ vertices labeled by the $n$-bitstrings, with an edge between two vertices if they differ by exactly one bit. This graph is bipartite and $n$-regular. Thus, it has the structure of a topology, which we can call the \emph{$n$-cubical topology}, or $I^n$. Now, if two vertices differ at some bit $i$, $1 \leq i \leq n$, color the edge between them with the color $i$. The $2$-colored $4$-cycle condition holds, so we get a chromotopology $I_c^n$, the \emph{$n$-cubical chromotopology}. Figure~\ref{fig:3cube norank} shows $I_c^3$; our earlier example adinkra in Figure~\ref{fig:3cube} also had this chromotopology. 

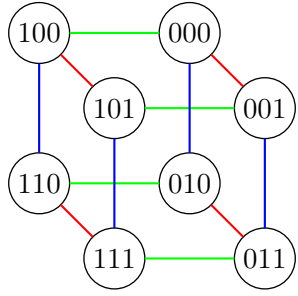
\begin{figure}[htb]
\begin{center}

\begin{tikzpicture}[scale=0.10]
\SetUpEdge[labelstyle={draw}]
\Vertex[x=0,y=0]{111}
\Vertex[x=20,y=0]{011}
\Vertex[x=0,y=20]{101}
\Vertex[x=20,y=20]{001}
\Vertex[x=-10,y=10]{110}
\Vertex[x=10,y=10]{010}
\Vertex[x=-10,y=30]{100}
\Vertex[x=10,y=30]{000}
\Edge[color=red](100)(101)
\Edge[color=red](000)(001)
\Edge[color=red](010)(011)
\Edge[color=red](110)(111)
\Edge[color=green](000)(100)
\Edge[color=green](001)(101)
\Edge[color=green](010)(110)
\Edge[color=green](011)(111)
\Edge[color=blue](000)(010)
\Edge[color=blue](001)(011)
\Edge[color=blue](100)(110)
\Edge[color=blue](101)(111)
\end{tikzpicture}
\caption{The hypercube chromotopology $I_c^3$. \label{fig:3cube norank}}
\end{center}
\end{figure}

\subsection{The Valise}
\label{sec:valise}

Note that any bipartite prechromotopology (including all chromotopologies) $A$ can be ranked as follows: take one choice of bipartition of $V(A)$ into bosons and fermions. Assign the rank function $h$ to take values $0$ on all bosons and $1$ on all fermions, which creates a height-$2$ poset. We call the corresponding ranked prechromotopology a \emph{valise}. Because we could have switched the roles of bosons and fermions, each bipartite prechromotopology gives rise to exactly two valises.

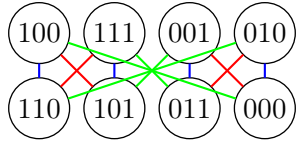
\begin{figure}[htb]
\begin{center}

\begin{tikzpicture}[scale=0.10]
\SetUpEdge[labelstyle={draw}]
\Vertex[x=0,y=20]{111}
\Vertex[x=0,y=10]{101}
\Vertex[x=20,y=20]{010}
\Vertex[x=20,y=10]{000}
\Vertex[x=-10,y=10]{110}
\Vertex[x=10,y=10]{011}
\Vertex[x=-10,y=20]{100}
\Vertex[x=10,y=20]{001}
\Edge[color=red](100)(101)
\Edge[color=red](000)(001)
\Edge[color=red](010)(011)
\Edge[color=red](110)(111)
\Edge[color=green](000)(100)
\Edge[color=green](001)(101)
\Edge[color=green](010)(110)
\Edge[color=green](011)(111)
\Edge[color=blue](000)(010)
\Edge[color=blue](001)(011)
\Edge[color=blue](100)(110)
\Edge[color=blue](101)(111)
\end{tikzpicture}

\caption{A valise with topology $I^3$.\label{fig:3cube valise}}
\end{center}
\end{figure}

\subsection{Graph Quotients and Codes}
\label{sec:graph quotients}
In this section, we recover the main result (Theorem~\ref{thm:adinkraizable as quotient}) classifying adinkraizable chromotopologies from the existing literature. However, we take a more general approach with multigraphs\footnote{The existing literature only considers quotients which are simple graphs, though double edges occur naturally in the quotient. An alternate way to avoid multigraphs would be to define the quotient to identify differently-colored multiple-edges as a single edge, but then the number of colors would change under a quotient and we lose information. We take the following compromise: for classification, we use our approach since we feel it is the most natural and inclusive setup, but afterwards we will respect the original literature and focus on adinkraizable chromotopologies (which have simple graphs).} and also classify prechromotopologies and chromotopologies.

Consider the $n$-cubical chromotopology $I^n_c$. For any linear code $L \subset \ZZ_2^n$, the quotient $\ZZ_2^n / L$ is a $\ZZ_2$-subspace. Using this, we define the map $p_L$, which sends $I^n_c$ to the following prechromotopology, which we call the \emph{graph quotient} (or \emph{quotient} for short) $I^n_c/L$: 
\begin{itemize}
\item let the vertices of $I^n_c/L$ be labeled by the equivalence classes of $\ZZ_2^n/L$ and define $p_L(v)$ to be the image of $v$ under the quotient $\ZZ_2^n/L$. When $L$ is an $(n,k)$-code, the preimage over every vertex in $I^n_c/L$ contains $2^k$ vertices, so $I^n_c/L$ has $2^{n-k}$ vertices. 
\item let there be an edge $p_L(v,w)$ in $I^n_c/L$ with color $i$ between $p_L(v)$ and $p_L(w)$ in $I^n/L$ if there is at least one edge with color $i$ of the form $(v', w')$ in $\ZZ_2^n$, with $v' \in p_L^{-1}(v)$ and $w' \in p_L^{-1}(w)$.
\end{itemize}
Every vertex in $I^n_c/L$ still has degree $n$ (counting possible multiplicity) and the commutivity condition on the $q_i$'s is unchanged under a quotient, so $I^n_c/L$ is indeed a prechromotopology. Let the pretopology $I^n/L$ be the underlying multigraph of $I^n_c/L$. We now prove some properties of the quotient.


\begin{prop}
\label{prop:codes}
The following hold for $A = I^n/L$, where $L$ is a code.
\begin{enumerate}
\item $A$ has a loop if and only if $L$ contains a bitstring of weight $1$; $A$ has a double edge if and only if $L$ contains a bitstring of weight $2$.
\item $A$ can be ranked if and only if $A$ is bipartite, which is true if and only if $L$ is an even code.
\end{enumerate}
\end{prop}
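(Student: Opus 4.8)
The plan is to translate every combinatorial feature of the quotient into a statement about cosets of $L$ in $\ZZ_2^n$. Recall that each edge of $I^n_c$ has the form $(v, v+e_i)$, where $e_i$ is the weight-$1$ bitstring with a single $1$ in position $i$, and that $p_L$ identifies $v$ with the coset $v+L$; equivalently, $q_i$ descends to the quotient and sends $v+L$ to $(v+e_i)+L$. Thus adjacency, loops, and edge multiplicities in $A = I^n/L$ are governed entirely by membership of small-weight bitstrings in $L$. With this dictionary each assertion becomes a short algebraic check, and the only genuine content is (i) a care about degenerate endpoints in the double-edge claim and (ii) producing an odd cycle in the non-bipartite direction.

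For part (1), a color-$i$ loop occurs at $p_L(v)$ exactly when $p_L(v) = p_L(v+e_i)$, i.e. $e_i \in L$; since the weight-$1$ bitstrings are precisely the $e_i$, this gives ``$A$ has a loop $\iff L$ contains a weight-$1$ codeword.'' For double edges, I would observe that two edges can join the same pair of distinct cosets only via two different colors $i \neq j$ leaving a common vertex, so that $(v+e_i)+L = (v+e_j)+L$; this says exactly $e_i + e_j \in L$, a weight-$2$ codeword, and conversely every weight-$2$ codeword has this form. The forward direction holds verbatim. The point requiring care — and what I expect to be the main obstacle in stating the equivalence cleanly — is the backward direction: an honest double edge needs the endpoints $p_L(v)$ and $p_L(v+e_i)$ to be distinct, i.e. $e_i \notin L$. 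This is automatic in the regime of interest (when $L$ has no weight-$1$ codeword), and I would either impose that hypothesis or note that a weight-$2$ codeword $e_i+e_j$ with $e_i \in L$ is already accounted for by weight-$1$ codewords producing loops.

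For part (2), introduce the parity homomorphism $\epsilon \colon \ZZ_2^n \to \ZZ_2$, $\epsilon(v) = \wt(v) \bmod 2$. Since every cube edge flips one bit, $\epsilon$ changes across each edge, so it is a proper $2$-coloring exhibiting $I^n$ as bipartite. This coloring descends to $A$ precisely when $\epsilon$ is constant on cosets, i.e. $\epsilon|_L \equiv 0$, i.e. $L$ is even; in that case $A$ inherits the bipartition. For the converse, if $L$ is not even pick $w \in L$ with $\wt(w)$ odd and walk from the zero bitstring to $w$ in $I^n$ by flipping one $1$-bit of $w$ at a time; this is a path of odd length $\wt(w)$ whose endpoints are identified in $A$ (as $w \in L$), hence an odd closed walk, forcing an odd cycle and non-bipartiteness. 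This is the one step that is a real argument rather than bookkeeping.

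Finally, ``$A$ can be ranked $\iff A$ is bipartite'' splits into two implications. If $A$ is ranked, its edges are Hasse cover relations and so join consecutive ranks; then $h \bmod 2$ is a proper $2$-coloring (and no loop can satisfy $h(v) = h(v)+1$), so $A$ is bipartite. Conversely, if $A$ is bipartite I invoke the valise construction of Section~\ref{sec:valise}: a bipartition of a (necessarily loopless) bipartite prechromotopology yields a height-$2$ ranked structure. Chaining the three equivalences gives the stated result.
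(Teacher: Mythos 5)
Your proof is correct and follows essentially the same route as the paper's: the coset dictionary for loops and double edges, the weight-parity argument for bipartiteness (the paper lifts an odd cycle to a path whose endpoints differ by an odd-weight codeword, you run the same correspondence in the opposite direction via an explicit odd closed walk), and the valise construction together with $h \bmod 2$ for the ranking equivalence. The one substantive difference is that you explicitly flag the degenerate case in the double-edge converse (a weight-$2$ codeword $e_i + e_j$ when $e_i \in L$, forcing the would-be parallel edges to collapse to loops), a subtlety the paper's proof elides with ``the logic is reversible'' --- your extra care there is an improvement rather than a detour.
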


\begin{proof}
\begin{enumerate}
\item Suppose $A$ has a loop. This means some edge $(v,w)$ in $I^n_c$ has both endpoints $v$ and $w$ mapped to the same vertex in the quotient. Equivalently, $(v-w) \in L$. However, $v$ and $w$ differ by a bitstring of weight $1$. Suppose $A$ has a double edge $(v,w)$ with colors $i$ and $j$. Since $q_1(q_2(v)) = v$ in $A = I^n_c/L$, for some $v' \in p_L^{-1}(v)$, we must have in $I^n_c$ that $q_1(q_2(v')) - v'$ is in $L$. But this is a weight $2$ bitstring with support in $i$ and $j$. The logic is reversible in both of these situations.

\item Suppose $A$ were not bipartite, then $A$ has some odd cycle. One of the preimages of this cycle in $I_c^n$ is a path of odd length from some $v$ to some $w$ that both map to the same vertex under the quotient (i.e. $v - w \in L$). Since each edge changes the weight of the vertex by $1 \pmod{2}$, $v - w$ must have an odd weight. Since $v - w \in L$, $L$ cannot be an even code. In the other direction, if $L$ were an even code such odd cycles cannot occur. 

Recall that any bipartite prechromotopology can be ranked by making a valise (see Section~\ref{sec:valise}). If $A$ can be ranked via a rank function $h$, the sets $\{v \in V(A) \mid h(v) \cong 0 \pmod{2}\}$ and $\{v \in V(A) \mid h(v) \cong 1 \pmod{2}\}$ must be a bipartition of $A$ because all the edges in $A$ change parity of $h$. \qedhere
\end{enumerate}
\end{proof}

The most difficult condition to classify is dashing. For this proof, we need the material from Appendix~\ref{app:clifford}. Let a \emph{dashing code} be a code where the following two conditions hold: first, all bitstrings in the code must have weight $0$ or $1 \pmod{4}$; second, for any two bitstrings $w_1$ and $w_2$, we have
$$(w_1 \cdot w_2) + \wt(w_1) \wt(w_2) = 0 \pmod{2},$$
where the first term is the dot product in $\ZZ_2^n$. The following result extends the combination of ideas used in {\cite[Construction 3.1]{d2l:clifford}} and {\cite[Theorem 4.2]{d2l:topology}}. 

\begin{prop}
\label{prop:dashing codes}
The prechromotopology $A = I^n_c/L$ can be dashed if and only if $L$ is a dashing code.
\end{prop}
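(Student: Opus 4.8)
The plan is to translate the existence of an odd dashing into an algebraic statement about the anticommuting operators $\Q_i$ and then to Clifford algebra representation theory. Recall from the excerpt that dashing $A = I^n_c/L$ is equivalent to producing operators $\Q_i$ on $\RR[V(A)]$ with $\Q_i(v) = d(v, q_i(v))\, q_i(v)$ that pairwise anticommute; each $\Q_i$ squares to the identity (since $q_i$ is an involution and signs multiply to $+1$ along a doubled edge). A tuple of anticommuting involutions is exactly a representation of a Clifford algebra on $\ZZ_2^n$ with the standard quadratic form, so I would first set up the correspondence: a dashing is the same data as a sign assignment making the $\Q_i$ satisfy $\Q_i\Q_j + \Q_j\Q_i = 2\delta_{ij}$. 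On the $n$-cube itself this is the standard Clifford representation (one can use the Appendix's results on the existence of such operators), and the content of the proposition is understanding when this structure descends to the quotient by $L$.

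Next I would reduce the descent question to a condition on each generator $\ell$ of $L$. A dashing on $I^n_c/L$ pulls back to a dashing on $I^n_c$ that is invariant under the translation action of $L$. Concretely, for a codeword $w = e_{i_1} + \cdots + e_{i_k}$, translating by $w$ is realized by the composite operator $\Q_{i_1}\cdots \Q_{i_k}$ (up to sign), and consistency of the dashing on the quotient requires this composite to act as a scalar that is compatible across the group $L$. I expect the key computation to be: the product $\Q_{i_1}\cdots\Q_{i_k}$ squares to $(-1)^{\binom{k}{2}}$ times the identity because of the anticommutation relations, and $\binom{k}{2} \equiv 0 \pmod 2$ precisely when $k = \wt(w) \equiv 0$ or $1 \pmod 4$. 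This is exactly the first condition in the definition of a dashing code, so I would show that condition ensures each codeword gives an operator squaring to $+1$ (hence a genuine well-defined involution/translation symmetry rather than something of order $4$).

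Then I would handle interactions between two codewords $w_1, w_2$, which is where the bilinear condition $(w_1\cdot w_2) + \wt(w_1)\wt(w_2) \equiv 0 \pmod 2$ enters. The operators realizing translation by $w_1$ and by $w_2$ must themselves commute (because $L$ is abelian and we need a consistent $L$-action to form the quotient dashing), and a direct Clifford computation shows that the product of two such monomials picks up a sign $(-1)^{(w_1\cdot w_2) + \wt(w_1)\wt(w_2)}$ when their order is swapped: the $w_1 \cdot w_2$ term counts the shared generators that must be anticommuted past each other, while the $\wt(w_1)\wt(w_2)$ term accounts for parity bookkeeping of the disjoint generators. Requiring commutativity gives exactly the second defining equation of a dashing code. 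For the converse, assuming $L$ is a dashing code, I would build the dashing explicitly by choosing the standard Clifford dashing on $I^n_c$ and checking it is $L$-invariant using the two conditions just verified, so it descends to $I^n_c/L$.

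The main obstacle I anticipate is bookkeeping the signs carefully and cleanly separating the two roles of the conditions: the single-codeword condition (weight $\equiv 0, 1 \bmod 4$) guarantees each translation operator is an involution, while the pairwise condition guarantees these operators commute so that the whole group $L$ acts coherently and the quotient dashing is well-defined. Getting the Clifford-algebra sign conventions to line up with the combinatorial $\binom{k}{2}$ and dot-product quantities — and making sure ``can be dashed'' on the quotient is genuinely equivalent to ``$L$-invariant dashing on the cube exists,'' with no hidden obstruction from the $4$-cycle summation condition — is the delicate part; I would lean on the Clifford results in Appendix~\ref{app:clifford} to supply the existence of the base dashing and the squaring relations rather than reproving them by hand.
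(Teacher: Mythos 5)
Your forward direction is essentially sound, though the justification for commutativity is not quite the right one. You argue the translation operators must commute ``because $L$ is abelian and we need a consistent $L$-action,'' but abelianness of $L$ alone does not force the lifted signed operators to commute: the group generated by the $\Q_w$ is a central extension of $L$ by $\{\pm 1\}$, and such extensions can be non-abelian (the quaternion group is a central extension of $\ZZ_2^2$). The paper's argument is more direct and avoids pulling back to the cube at all: for $w \in L$, the operator $\Q_w = \Q_n^{w_n}\cdots\Q_1^{w_1}$ computed \emph{on the quotient} traverses a closed loop from every vertex, hence acts diagonally with entries $\pm 1$; diagonal operators commute and square to the identity, and comparing with the universal signs $\Q_w^2 = (-1)^{\binom{\wt(w)}{2}}$ and $\Q_{w_1}\Q_{w_2} = (-1)^{(w_1\cdot w_2)+\wt(w_1)\wt(w_2)}\Q_{w_2}\Q_{w_1}$ (your computations here agree with Lemma~\ref{lem:clifford commutation}) yields both dashing-code conditions. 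Your version can be repaired by observing that descent forces each $\Q_w$ to act as a signed identity on cosets, but as written the step is a gap.

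The converse, however, contains a genuine error: the standard Clifford dashing on $I_c^n$ is \emph{not} $L$-invariant in general, even when $L$ is a dashing code, so your proposed descent fails. Concretely, take $n=4$ and $L = \{0000,1111\}$. The standard dashing from left multiplication assigns to the color-$2$ edge at $b$ the sign $d(b, b\oplus e_2) = \#\{j<2 : b_j = 1\} = b_1 \pmod 2$, and translating by $1111$ flips $b_1$, changing the dash; the right-multiplication convention fails similarly on color $1$. So there is no ``checking'' to be done --- an $L$-invariant odd dashing exists but must be \emph{constructed}, and this is exactly where the dashing-code conditions do real work: by Proposition~\ref{prop:clifford weird group} they guarantee a sign function $s$ making $\smon_L = \{s(v)\clif(v) \mid v \in L\}$ a genuine subgroup of $\smon$ (equivalently, the central extension over $L$ splits, which requires $-1 \notin \smon_L$, i.e.\ $\wt \equiv 0,1 \pmod 4$, together with commutativity). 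The paper then defines $\Q_i$ via left multiplication by $\gamma_i$ on the cosets $\smon/\smon_L$, which correspond (in $\pm$-pairs) to $V(A)$; the resulting dashing is a gauge-twisted version of the standard one, with the twist encoded in the choice of coset representatives, not the standard dashing itself. Your plan omits precisely this twisting step, which is the heart of the existence proof.
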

\begin{proof}
Given a bitstring $v = v_1 v_2 \cdots v_n$, define $\Q_v$ to be the map $\Q_n^{v_n} \cdots \Q_1^{v_1}$. 

Suppose we have an odd dashing. Let $v$ and $w$ be codewords in $L$. Both $\Q_v$ and $\Q_w$ take any vertex to itself in $\RR[V(A)]$ with possibly a negative sign, since the $\Q_i$ are basically the $q_i$ with possibly a sign, and following a sequence of $q_i$ corresponding to a codeword is a closed loop in $I_c^n/L$. This means  $\Q_v$ and $\Q_w$ must commute; furthermore, $\Q_v^2$ must be the identity map for any $v \in L$. By Lemma~\ref{lem:clifford commutation}, this is exactly the condition required for $L$ being a dashing code.

Now, suppose $L$ were a dashing code. Then by Proposition~\ref{prop:clifford weird group}, we can find a sign function $s$ such that $\{s(v)\clif(v) \mid v \in L\}$ form a subgroup $\smon_L \subset \smon$, the signed monomials of $\cl(n)$. This gives a well-defined action of $\smon$ on $\smon / \smon_L$ via left multiplication while possibly introducing signs. The cosets of $\smon_L$ under $\smon$ naturally correspond to $V(A)$, so we can define $\Q_i(v)$ to introduce the same sign as $\gamma_i$ on $\clif(v) \in \smon / \smon_L$. Since we have a Clifford algebra action, we get the desired anticommutation relations between $\Q_i$ and thus an odd dashing.
\end{proof}


Quotients of $I^n_c$ are prechromotopologies. Surprisingly, the converse is also true, which gives us our main classification:

\begin{theorem}
\label{thm:multigraphs are quotients}
Prechromotopologies are exactly quotients $I^n_c/L$ for some code $L$.
\end{theorem}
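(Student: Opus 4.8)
The plan is to prove the nontrivial direction: every prechromotopology $A$ arises as a quotient $I^n_c/L$. The text already establishes that each $I^n_c/L$ is a prechromotopology, so this converse completes the classification. The key observation is that the maps $q_1, \ldots, q_n$ attached to $A$ are commuting involutions, so they generate an action of $\ZZ_2^n$ on $V(A)$ through the homomorphism sending the $i$-th standard generator $e_i$ to $q_i$. Concretely, for a bitstring $b = b_1 \cdots b_n$ I would set $q_b = q_n^{b_n} \cdots q_1^{b_1}$; since the $q_i$ commute and square to the identity, the assignment $b \mapsto q_b$ is a group homomorphism $\ZZ_2^n \to \on{Sym}(V(A))$.

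First I would fix a base vertex $v_0 \in V(A)$ and consider the orbit map $\phi \colon \ZZ_2^n \to V(A)$, $b \mapsto q_b(v_0)$. Because $A$ is connected, any vertex can be reached from $v_0$ by following a sequence of colored edges, i.e. by applying some composite of the $q_i$; hence $\phi$ is surjective. Let $L = \{ b \in \ZZ_2^n \mid q_b(v_0) = v_0 \}$ be the stabilizer of $v_0$. As the stabilizer of a group action it is a subgroup of $\ZZ_2^n$, and every subgroup of $\ZZ_2^n$ is a $\ZZ_2$-subspace, so $L$ is automatically a code. The homomorphism property gives $q_b(v_0) = q_{b'}(v_0)$ exactly when $b + b' \in L$, so $\phi$ descends to a bijection $\overline{\phi} \colon \ZZ_2^n/L \to V(A)$.

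It remains to check that $\overline{\phi}$ is an isomorphism of prechromotopologies, i.e. that it matches the colored (multi-)edge data. By construction the color-$i$ neighbor of $\overline{\phi}(b + L) = q_b(v_0)$ is $q_i(q_b(v_0)) = q_{b + e_i}(v_0) = \overline{\phi}(b + e_i + L)$, which is precisely the color-$i$ adjacency rule defining $I^n_c/L$. Since in both graphs the set of color-$i$ edges at a vertex is determined by the single map $q_i$, the correspondence of edges is exact, and it handles the degenerate cases automatically: a loop occurs exactly when $b + e_i \in b + L$, and a double edge exactly when two colors send $b + L$ to the same neighboring coset. Therefore $\overline{\phi}$ is an isomorphism and $A \cong I^n_c/L$.

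I expect the main subtlety to be bookkeeping rather than a deep difficulty: one must verify that the edges of a prechromotopology really are encoded faithfully by the $q_i$, so that loops and multiple edges are captured correctly, and that connectedness of the multigraph genuinely forces transitivity of the $\ZZ_2^n$-action. Once the action is set up, the heart of the argument is just the orbit--stabilizer identification $V(A) \cong \ZZ_2^n / \on{Stab}(v_0)$ transported through the combinatorial structure.
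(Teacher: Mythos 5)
Your proposal is correct and is essentially the paper's own proof: both set up the $\ZZ_2^n$-action generated by the commuting involutions $q_i$, use connectedness for transitivity, take $L$ to be the stabilizer of a base vertex $v_0$, and identify $V(A)$ with $\ZZ_2^n/L$ via orbit--stabilizer before checking the colored edge data. If anything, your phrasing via the homomorphism $\ZZ_2^n \to \on{Sym}(V(A))$ is slightly more careful than the paper's map $\phi\colon G \to \ZZ_2^n$ (which is not well-defined as stated when a nontrivial product of the $q_i$ acts as the identity), since your version absorbs that degeneracy into the stabilizer $L$ automatically.
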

\begin{proof}
Take a prechromotopology $A$. Consider the abelian group $G$ acting on $V(A)$ generated by the $q_i$. The elements of $G$ can be written as $g = q_1^{s_1} q_2^{s_2} \cdots q_n^{s_n}$, where $s_i \in \ZZ_2$ for all $i$. Consider the isomorphism $\phi\colon G \to L$ which sends such a $g$ to the $n$-bitstrings $s_1 s_2 \cdots s_n \in \ZZ_2^n$. Take any vertex $v_0 \in V(A)$ and consider its stabilizer group $H$ under $G$. $\phi(H)$ is a subspace of $\ZZ_2^n$ and thus must be some code $L$. Any vertex $v$ is equal to $g(v_0)$ for some $g \in G$, so we may label $v$ with the coset $\phi(g) + L$. It is easy to check the resulting prechromotopology is exactly the one produced by the quotient $I^n_c/L$.
\end{proof}

Combining Proposition~\ref{prop:codes} and Theorem~\ref{thm:multigraphs are quotients} immediately gives the classification of all chromotopologies and adinkraizable chromotopologies:

\begin{theorem}
\label{thm:chromotopology as quotient}
Chromotopologies are exactly quotients $I^n_c/L$, where $L$ is an even code with no bitstring of weight $2$.
\end{theorem}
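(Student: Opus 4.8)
The plan is to derive this purely by combining Theorem~\ref{thm:multigraphs are quotients} with Proposition~\ref{prop:codes}, after pinning down exactly which prechromotopologies are genuine chromotopologies. The key observation is that a chromotopology is precisely a prechromotopology whose underlying multigraph is a \emph{simple} graph (no loops, no double edges) and which is bipartite: once the graph is simple there are no degenerate $4$-cycles available, so the relaxed prechromotopology $4$-cycle condition automatically upgrades to the honest $2$-colored $4$-cycle condition of a chromotopology, while bipartiteness together with $n$-regularity and connectedness are exactly the remaining topology axioms.

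Given this, Theorem~\ref{thm:multigraphs are quotients} already tells us every prechromotopology is $I^n_c/L$ for some code $L$ and conversely, so it suffices to translate the three conditions ``no loops,'' ``no double edges,'' and ``bipartite'' into conditions on $L$. First I would invoke Proposition~\ref{prop:codes}(2) to replace ``bipartite'' by ``$L$ is an even code.'' Then I would invoke Proposition~\ref{prop:codes}(1) to replace ``no double edge'' by ``$L$ contains no bitstring of weight $2$'' and ``no loop'' by ``$L$ contains no bitstring of weight $1$.''

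The final step is to notice that these conditions are partly redundant: an even code, by definition, contains only bitstrings of even weight, so it can never contain a weight-$1$ bitstring. Hence the loop-free requirement is automatically implied by evenness, and the only surviving constraint beyond ``$L$ even'' is ``$L$ has no bitstring of weight $2$.'' This collapses the characterization to exactly the statement of the theorem, with both directions following at once since every step is a biconditional.

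I expect no serious obstacle here, as the result is essentially a direct corollary of the two cited results; the one point genuinely requiring care is the first paragraph's claim that a simple prechromotopology is automatically a chromotopology. I would argue this explicitly by observing that a degenerate $4$-cycle must use a loop or a double edge, so that in the simple case the union of any two color classes is forced to be a disjoint union of genuine (nondegenerate) $4$-cycles, matching the chromotopology definition verbatim.
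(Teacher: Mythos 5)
Your proposal is correct and follows exactly the paper's route: the paper derives this theorem by declaring it an immediate combination of Proposition~\ref{prop:codes} and Theorem~\ref{thm:multigraphs are quotients}, which is precisely your argument, with the details (simplicity plus bipartiteness characterizing chromotopologies among prechromotopologies, and evenness subsuming the weight-$1$ exclusion) filled in. Your explicit check that a degenerate $4$-cycle forces a loop or double edge is a worthwhile elaboration of a step the paper leaves implicit, but it is not a different approach.
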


\begin{theorem}[DFGHILM, {\cite[Theorem 4.1]{d2l:topology}} and {\cite[Section 3.1]{d2l:clifford}}]
\label{thm:adinkraizable as quotient}
Adinkraizable chromotopologies are exactly quotients $I^n_c/L$, where $L$ is a doubly even code.
\end{theorem}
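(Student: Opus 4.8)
The plan is to combine the three preceding results---Theorem~\ref{thm:multigraphs are quotients}, Theorem~\ref{thm:chromotopology as quotient}, and Proposition~\ref{prop:dashing codes}---and reduce the statement to a purely code-theoretic equivalence. By definition, an adinkraizable chromotopology is a chromotopology that admits both a ranking and a dashing. By Theorem~\ref{thm:chromotopology as quotient}, every chromotopology is of the form $A = I^n_c/L$ where $L$ is an even code with no weight-$2$ bitstring, so it suffices to determine for which such $L$ the quotient $A$ can simultaneously be ranked and dashed. Proposition~\ref{prop:codes}(2) tells me that $A$ can be ranked exactly when $L$ is even, and Proposition~\ref{prop:dashing codes} tells me that $A$ can be dashed exactly when $L$ is a dashing code. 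So the whole theorem will follow once I show that, among codes with no weight-$2$ bitstring, being a dashing code is equivalent to being doubly even.

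First I would recall the definition of a dashing code: every codeword has weight $0$ or $1 \pmod 4$, and for all $w_1, w_2 \in L$ we have $(w_1 \cdot w_2) + \wt(w_1)\wt(w_2) = 0 \pmod 2$. The key step is to show that, for codes already known to be even, these two conditions together collapse to the single condition of double evenness. The forward direction is easy: a doubly even code has all weights $\equiv 0 \pmod 4$, so the first dashing-code condition holds trivially, and the standard identity $\wt(w_1 + w_2) = \wt(w_1) + \wt(w_2) - 2(w_1 \cdot w_2)$ forces $w_1 \cdot w_2 \equiv 0 \pmod 2$ whenever all three weights are divisible by $4$, while $\wt(w_1)\wt(w_2) \equiv 0 \pmod 2$ as well, so the second condition holds.

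The reverse direction is where I expect the real work to lie. Suppose $L$ is a dashing code; I want to conclude it is doubly even, i.e. rule out codewords of weight $\equiv 1 \pmod 4$. The constraint to exploit is the bilinear condition applied with $w_1 = w_2 = w$: it gives $(w \cdot w) + \wt(w)^2 \equiv 0 \pmod 2$, and since $w \cdot w = \wt(w)$ over $\ZZ_2$, this reads $\wt(w) + \wt(w)^2 \equiv \wt(w)(1 + \wt(w)) \equiv 0 \pmod 2$, which is automatically true and hence gives nothing new---so I cannot kill the weight-$1 \pmod 4$ codewords from a single codeword alone. The leverage must instead come from the even-code hypothesis together with the absence of weight-$2$ strings, combined with the bilinear condition across pairs. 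The cleanest route is probably to argue that the set of codewords of weight $\equiv 1 \pmod 4$ is empty by a linear-algebra or counting argument: if some $w$ had weight $\equiv 1\pmod 4$, I would examine how it interacts with a hypothetical basis and derive a contradiction with evenness (every codeword weight is $\equiv 0 \pmod 2$) being compatible with the $\pmod 4$ refinement the dashing condition demands. Here I would lean on the Clifford-algebraic interpretation underlying Proposition~\ref{prop:dashing codes}: the dashing-code conditions are exactly the requirement (via Lemma~\ref{lem:clifford commutation}) that the signed monomials $\clif(v)$ for $v \in L$ can be made into a commuting family of involutions, and a direct computation of $\clif(w)^2$ in $\cl(n)$ shows that $\clif(w)^2 = (-1)^{\wt(w)(\wt(w)-1)/2}$, which equals $+1$ precisely when $\wt(w) \equiv 0$ or $1 \pmod 4$; the even-code constraint removes the weight $\equiv 1 \pmod 4$ option, leaving $\wt(w) \equiv 0 \pmod 4$, i.e. double evenness.

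The main obstacle, then, is marshalling the interplay between the three separate divisibility facts---even from ranking, the $0/1 \pmod 4$ and bilinear conditions from dashing---and showing they are not merely individually necessary but jointly equivalent to double evenness once weight-$2$ strings are excluded. I would want to double-check that the weight-$2$ exclusion from Theorem~\ref{thm:chromotopology as quotient} is genuinely needed here (it is what guarantees we stay in the chromotopology regime rather than the more general prechromotopology regime) and that no doubly even code is accidentally discarded by any of the auxiliary conditions. Once the code-theoretic equivalence is nailed down, assembling it with Proposition~\ref{prop:codes}(2) and Proposition~\ref{prop:dashing codes} to produce the final biconditional is routine.
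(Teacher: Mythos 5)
Your proposal is correct and takes essentially the same route as the paper, which presents this theorem as an immediate combination of Theorem~\ref{thm:multigraphs are quotients}, Proposition~\ref{prop:codes}, and Proposition~\ref{prop:dashing codes} --- precisely the assembly you perform, with your check that doubly even codes satisfy the bilinear dashing-code condition via $\wt(w_1+w_2)=\wt(w_1)+\wt(w_2)-2(w_1\cdot w_2)$ filling in the one detail the paper leaves implicit. Note only that you overestimate the reverse direction: since ``all weights $\equiv 0$ or $1 \pmod 4$'' is the first clause of the \emph{definition} of a dashing code, evenness eliminates the $1 \pmod 4$ case in one line, so the linear-algebra/counting argument you speculate about and the $\clif(w)^2$ computation are unnecessary detours (the latter merely re-derives that clause, as in the proof of Proposition~\ref{prop:clifford weird group}).
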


Thanks to Theorem~\ref{thm:multigraphs are quotients}, we can assume the following:
\begin{framed}
\noindent From now on, any prechromotopology (including chromotopologies) $A$ we discuss comes from some $(n,k)$-code $L(A) = L$. If $L$ is an $(n,k)$-code, we say that the corresponding $A$ is an $(n,k)$-prechromotopology (or chromotopology).
\end{framed}

An $(n,0)$-prechromotopology is exactly the $n$-cubical chromotopology, corresponding to the trivial code $\{\overrightarrow{0}\}$. The first non-cubical chromotopology, shown in Figure~\ref{fig:4cube folding}, is the result of quotienting the $4$-cubical topology by the code $L = \{0000, 1111\}$, the smallest non-trivial doubly-even code. It has the topology of the bipartite graph $K_{4,4}$.

\begin{figure}[htb]
\begin{center}

\begin{tabular}{cc}
\begin{tikzpicture}[scale=0.10]
\SetUpEdge[labelstyle={draw}]
\Vertex[x=0,y=0]{A}
\Vertex[x=-5,y=10]{C}
\Vertex[x=-15,y=10]{B}
\Vertex[x=5,y=10]{D}
\Vertex[x=15,y=10]{E}
\Vertex[x=-25,y=20]{H'}
\Vertex[x=-15,y=20]{G'}
\Vertex[x=-5,y=20]{F'}
\Vertex[x=25,y=20]{H}
\Vertex[x=15,y=20]{G}
\Vertex[x=5,y=20]{F}
\Vertex[x=-5,y=30]{D'}
\Vertex[x=-15,y=30]{E'}
\Vertex[x=5,y=30]{C'}
\Vertex[x=15,y=30]{B'}
\Vertex[x=0,y=40]{A'}
\Edges(A,B,H',C,A,D,H,E,A)
\Edges(B,G',D,F,C,G,E,F',B)
\Edges(A',B',H,C',A',D',H',E',A')
\Edges(B',G,D',F',C',G',E',F,B')
\end{tikzpicture}
&
\begin{tikzpicture}[scale=0.10]
\SetUpEdge[labelstyle={draw}]
\Vertex[x=0,y=0]{A}
\Vertex[x=-5,y=10]{C}
\Vertex[x=-15,y=10]{B}
\Vertex[x=5,y=10]{D}
\Vertex[x=15,y=10]{E}
\Vertex[x=25,y=20]{H}
\Vertex[x=15,y=20]{G}
\Vertex[x=5,y=20]{F}
\Edges(A,B,G)
\Edges(B,H,C)
\Edges(C,A,D,H,E,A)
\Edges(D,F,C,G,E,F,B)

\end{tikzpicture}
\end{tabular}
\caption{The topologies $I^4$ and $I^4/\{0000,1111\}$. Labels with the same letter are sent to the same vertex. \label{fig:4cube folding}}
\end{center}
\end{figure}
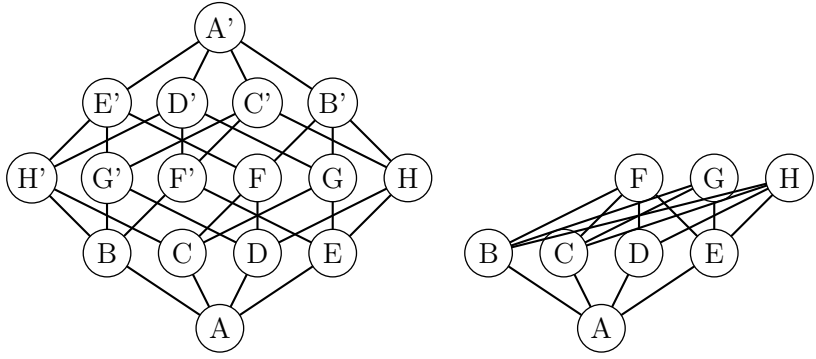

While we have ``solved'' the problem of classifying adinkraizable chromotopologies by reducing it to that of classifying doubly-even linear codes, the theory of these codes is very rich and nontrivial. Computationally, \cite{miller} contains the current status of the classification through an exhaustive search. Interestingly, when examining irreducible adinkratic representations, self-orthogonal codes come up. Self-orthogonal codes form a well-studied subset of codes. We talk more about this in Section~\ref{sec:together}.

\subsection{A Homological View}

From the theorems in Section~\ref{sec:graph quotients}, we see that the $n$-cubical chromotopologies $I^n_c$ look like universal covers in the sense that everything else come from their quotients. We make this intuition rigorous in this section with the language of homological algebra. Any standard introduction, such as \cite{Hatcher}, is more than sufficient for our purposes.

We work over $\ZZ_2$. Construct the following $2$-dimensional complex $X(A)$ from a chromotopology $A$. Let $C_0$ be formal sums of elements of $V(A)$ and $C_1$ be formal sums of elements of $E(A)$. For each $2$-colored $4$-cycle $C$ of $A$, create a $2$-cell with $C$ as its boundary as a generator in $C_2$, the boundary maps $\{d_i\colon C_i \to C_{i-1}\}$ are the natural choices (we do not worry about orientations since we are using $\ZZ_2$), giving homology groups $H_i = H_i(X(A))$. The most important observation about our complex $X(A)$ is the following, which we return to in Section~\ref{sec:homology}.

\begin{prop}
\label{prop:covering space}
Let $A$ be an $(n,k)$-adinkraizable chromotopology corresponding to the code $L$. Then $X(A) = X(I_c^n)/L$ as a quotient complex, where $L$ acts freely on $X(I_c^n)$. We have that $X(I_c^n)$ is a simply-connected covering space of $X(A)$, with $L$ the group of deck transformations.
\end{prop}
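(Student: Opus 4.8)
The plan is to prove the two assertions of Proposition~\ref{prop:covering space} in sequence, building up from the vertex level (where Theorem~\ref{thm:multigraphs are quotients} already does most of the work) to the $2$-cells, and then to invoke standard covering-space theory. First I would make precise the action of $L$ on $X(I_c^n)$. On $C_0 = \RR$-spans (here $\ZZ_2$-spans) of vertices, $L$ acts by translation $v \mapsto v + \ell$ for $\ell \in L$, which is exactly the free $\ZZ_2^n$-flavored action already discussed; the key point is that since $L$ is doubly even it contains no weight-$1$ or weight-$2$ bitstrings (by Theorem~\ref{thm:adinkraizable as quotient} and Proposition~\ref{prop:codes}), so no vertex, edge, or $4$-cycle is fixed or sent to an adjacent overlapping cell. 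Adding $\ell$ sends an edge of color $i$ (a pair $\{v, v+e_i\}$) to the edge $\{v+\ell, v+e_i+\ell\}$ of the same color, so $L$ permutes $C_1$; and since a $2$-colored $4$-cycle in colors $i,j$ based at $v$ is the coset $\{v, v+e_i, v+e_j, v+e_i+e_j\}$, translation by $\ell$ carries it to the parallel $4$-cycle based at $v+\ell$, so $L$ permutes the generators of $C_2$ as well. Thus $L$ acts cellularly and freely on all of $X(I_c^n)$.

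Next I would identify the quotient complex $X(I_c^n)/L$ with $X(A)$ cell by cell. The vertex identification is precisely Theorem~\ref{thm:multigraphs are quotients}: the $0$-cells of the quotient are the cosets $\ZZ_2^n/L$, matching $V(A)$. For edges, two edges of $X(I_c^n)$ are identified in the quotient exactly when they differ by an element of $L$, and by the definition of the graph quotient $I_c^n/L$ the edge set of $A$ is exactly the set of such $L$-orbits of colored edges (here I should note that the doubly-even hypothesis guarantees $A$ is a genuine simple graph, so no edge-orbit degenerates). For $2$-cells, I would observe that the boundary maps $d_i$ are $L$-equivariant by construction (translation commutes with taking boundaries over $\ZZ_2$), so the natural map $X(I_c^n)/L \to X(A)$ respects the cell structure and the boundary operators, and is a bijection on cells of each dimension; hence it is an isomorphism of complexes. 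This is the step I expect to carry the most bookkeeping, since one must check that the $4$-cycle-to-$2$-cell assignment is compatible with the quotient, i.e. that distinct $4$-cycles of $A$ lift to distinct $L$-orbits of $4$-cycles in $I_c^n$ and conversely — again this is where the absence of short codewords is used to rule out collapsed or doubled cells.

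Finally, for the topological conclusions I would invoke the standard theory (e.g. \cite{Hatcher}). Since $L$ acts freely and cellularly on the CW-complex $X(I_c^n)$ with quotient $X(A)$, the quotient map is a covering map and $L$ is its group of deck transformations; freeness is exactly what was established in the first step. It then remains to argue that $X(I_c^n)$ is simply connected. For this I would note that $X(I_c^n)$ is the $2$-skeleton built from the $1$-skeleton (the hypercube graph) by attaching a $2$-cell along every $2$-colored $4$-cycle; since $\pi_1$ of the complex is the quotient of $\pi_1$ of the graph by the normal subgroup generated by the boundaries of the attached $2$-cells, it suffices to know that the $4$-cycles generate all of $H_1$/the cycle space of the hypercube graph. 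The cycle space of the $n$-cube is generated by its $2$-colored $4$-cycles (the squares $\{v, v+e_i, v+e_j, v+e_i+e_j\}$), a standard fact I would cite or prove in one line, and since the graph is the $1$-skeleton of the contractible solid cube these squares kill every loop; hence $\pi_1(X(I_c^n)) = 1$. The main obstacle, and the place I would spend the most care, is the cell-by-cell identification in the second step — specifically verifying that the $L$-action on $2$-cells is free and that the quotient does not accidentally merge two distinct $4$-cycles, both of which hinge on $L$ being doubly even and thus devoid of low-weight codewords.
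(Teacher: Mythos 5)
Most of your proposal is a careful elaboration of something the paper disposes of in one sentence: the paper simply asserts that $X(A) = X(I_c^n)/L$ is ``evident from the construction of the graph quotient,'' whereas you actually verify that $L$ acts cellularly and freely on vertices, edges, and $2$-cells, that the boundary maps are $L$-equivariant, and that no cells collapse or merge because a doubly even code has minimum weight $4$ (a translation preserving a square $\{v, v+e_i, v+e_j, v+e_i+e_j\}$ would have to lie in that set, hence have weight at most $2$). That part is correct, matches the paper's intent, and is arguably more rigorous than the paper itself. The covering-space and deck-group conclusions from a free cellular action are also handled correctly.

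The genuine gap is in your simple-connectedness step. You claim ``it suffices to know that the $4$-cycles generate all of $H_1$/the cycle space of the hypercube graph.'' This sufficiency claim is false in general: attaching $2$-cells whose boundaries generate the cycle space of the $1$-skeleton only forces $H_1(X(I_c^n)) = 0$ over the relevant coefficients, i.e.\ it makes $\pi_1$ perfect, not trivial. The presentation $2$-complex of any nontrivial perfect group (the binary icosahedral group, say) has $2$-cells killing all of $H_1$ of its $1$-skeleton while $\pi_1 \neq 1$; since $\pi_1$ of a graph is free and you are quotienting by the \emph{normal closure} of the square boundaries, surjecting onto the abelianization proves nothing about triviality. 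Your trailing clause --- ``since the graph is the $1$-skeleton of the contractible solid cube these squares kill every loop'' --- gestures at the right repair but does not follow from contractibility of the solid cube alone. What is actually needed, and what the paper uses, is the stronger identification that $X(I_c^n)$ is the \emph{entire $2$-skeleton} of the solid hypercube $Y$ (the square faces of $Y$ are precisely the $2$-colored $4$-cycles), combined with the standard fact that attaching cells of dimension $\geq 3$ does not change $\pi_1$, so $\pi_1(X(I_c^n)) = \pi_1(Y) = 1$. With that substitution in place of the $H_1$ argument, your proof goes through and is otherwise a fleshed-out version of the paper's.
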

\begin{proof}
The fact that $X(A)$ is a quotient complex is already evident from the construction of the graph quotient, since we have restricted to simple graphs (recall that adinkraizable chromotopologies have simple graphs). The more interesting statement is that $X(I^n)$ is simply connected. A cute way to see this is to note that $X(I^n)$ is the $2$-skeleton of the $n$-dimensional (solid) hypercube $Y$. Thus, $X(I^n)$ and $Y$ must have matching $H_1$ and $\pi_1$. But $Y$ is obviously simply-connected.
\end{proof}

\subsection{Adinkra Decomposition}
\label{sec:decomposition}

Finally, we introduce a notion designed to reduce the complexity of chromotopologies. Say that a color $i$ \emph{decomposes} a chromotopology $A$ if removing all edges of color $i$ splits $A$ into $2$ separate connected components. Our definition was inspired by observations in \cite{d2l:clifford}, where certain adinkras were called \emph{$1$-decomposable}.


\begin{lemma}
\label{lem:cut preserves bit}
The color $i$ decomposes the chromotopology $A$ if and only if for all $d \in L(A)$, the $i$-th bit of $d$ is $0$. 
\end{lemma}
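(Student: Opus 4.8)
The plan is to translate the combinatorial notion of ``color $i$ decomposes $A$'' into a statement about which vertices are reachable from a fixed basepoint using only the non-$i$ edge-traversals, and then to recognize that reachability question as a condition on the code $L = L(A)$. Throughout I will use the algebraic picture from Section~\ref{sec:definitions}: the maps $q_1,\dots,q_n$ generate a $\ZZ_2^n$-action on $V(A)$, and by Theorem~\ref{thm:multigraphs are quotients} we may label $V(A)$ by the cosets $\ZZ_2^n/L$, with $q_j$ acting by adding the $j$-th standard basis vector $e_j$ (mod $L$) and each color-$j$ edge realizing a single such traversal.

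First I would make precise what the two connected components are when we delete all color-$i$ edges. Deleting the color-$i$ edges leaves the graph whose edges correspond to the traversals $q_j$ for $j \neq i$. Hence the connected component of a vertex $v_0$ is exactly the set of cosets reachable from $v_0$ by adding sums $\sum_{j \neq i} s_j e_j$; equivalently, it is the image in $\ZZ_2^n/L$ of the subspace $W$ spanned by $\{e_j : j \neq i\}$, translated to start at $v_0$. So the whole graph splits into the cosets of the subgroup $(W + L)/L$ inside $\ZZ_2^n/L$.

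Next I would count these cosets. Since $A$ is connected, adding the single remaining generator $e_i$ must reach everything outside the component of $v_0$, so there are at most two components, and removing color $i$ gives exactly two components iff $e_i \notin W + L$. The forward and backward directions of the lemma then both reduce to the elementary linear-algebra equivalence
\[
e_i \notin W + L \iff e_i \text{ has nonzero } i\text{-th coordinate in every representative, i.e. no } d \in L \text{ has } i\text{-th bit } 1.
\]
Concretely, $e_i \in W + L$ would mean $e_i = w + d$ for some $w \in W$ and $d \in L$; since every vector in $W$ has $i$-th bit $0$ while $e_i$ has $i$-th bit $1$, this forces $d$ to have $i$-th bit $1$. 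Conversely, if some $d \in L$ has $i$-th bit $1$, then $e_i - d \in W$, so $e_i \in W + L$ and deleting color $i$ leaves the graph connected.

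I expect the only subtle point to be the bookkeeping in the reduction of the first paragraph: one must check that ``two components'' (rather than one) is genuinely equivalent to $e_i \notin W + L$, using connectedness of $A$ to rule out more than two components and using $n$-regularity to guarantee that color-$i$ edges actually exist to be deleted. Once the component structure is identified as cosets of $(W+L)/L$, the remaining step is the purely linear equivalence above, which is routine. The main conceptual move, and the step I would emphasize, is recognizing the decomposition condition as the separation of $e_i$ from the hyperplane-subspace $W$ modulo $L$.
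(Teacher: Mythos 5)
Your proof is correct, but it is organized differently from the paper's. The paper argues the two directions separately and combinatorially: for the ``if'' direction it observes that when every codeword has $i$-th bit $0$, the $i$-th bit is well-defined on cosets, partitions $V(A)$ into $A_0$ and $A_1$ accordingly, and checks that any edge between the two classes must have color $i$; for the ``only if'' direction it argues by contradiction, taking a codeword $d \in L$ with $i$-th bit $1$ and walking along the colors in the support of $d$ other than $i$ to produce a path between the two components that avoids color $i$. You instead make a single structural computation: after deleting the color-$i$ edges, the connected components are exactly the cosets of $(W+L)/L$ in $\ZZ_2^n/L$ where $W = \operatorname{span}\{e_j : j \neq i\}$, so the number of components is the index $[\ZZ_2^n : W+L] \in \{1,2\}$, and the whole lemma collapses to the linear-algebra equivalence $e_i \notin W+L \iff L \subseteq W \iff$ every $d \in L$ has $i$-th bit $0$. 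Your route buys three things: both directions fall out of one equivalence rather than two ad hoc arguments; you get the sharper dichotomy that deleting color $i$ yields either one component or exactly two, never more; and you explicitly verify that each of the two pieces is connected (each coset is a single reachability class), a point the paper's ``if'' direction glosses over when it passes from ``all cross edges have color $i$'' to ``$i$ decomposes $A$.'' The paper's path-construction argument, for its part, is more hands-on and makes visible the concrete walk witnessing failure of decomposition, which is the same walk implicit in your statement $e_i - d \in W$. One small note: your aside about needing $n$-regularity to guarantee color-$i$ edges exist is unnecessary, since the coset count alone settles the component structure.
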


\begin{proof}
The condition that the $i$-th bits of all codes in $L(A)$ are $0$ is equivalent to the condition that the vertices in $I_c^n/L$ correspond to equivalence classes $v = \{v_i\}$ where all of the $v_i$ have the same $i$-th bit. Thus, when this condition is met we can split the vertices in $I_c^n/L$ into two classes, $A_0$ and $A_1$, where vertices in $A_0$ have $0$ as the $i$-th bit and vertices in $A_1$ have $1$ as the $i$-th bit. Now, if vertices $v \in A_0$ and $w \in A_1$ have an edge corresponding to bit $j \neq i$, they must correspond to two bitstrings $v'$ and $w'$ where $q_j(v') = w'$. But this is impossible, since we know all elements in $p_L^{-1}(v)$ have $0$ in the $i$-th bit and the opposite is true for $p_L^{-1}(w)$. Thus, such two vertices $v$ and $w$ can only have edges corresponding to bit $i$, and this is equivalent to saying that $i$ decomposes $A$.

On the other hand, if $i$ decomposes $A$, let the two connected components have vertex sets $A_0$ and $A_1$, take some edge $(v, w)$ in $A$ with $v \in A_0$ and $w \in A_1$. The edge must have color $i$. Now, take a preimage $v' \in p_L^{-1}(v)$. If any bitstring $d \in L(A)$ with weight $k$ has $1$ as the $i$-th bit, then we can get from $v'$ to some $w' \in A$ via $k-1$ steps corresponding to the colors in the support of $d$ that are not $i$. Under $p_L$, this walk sends $w'$ to $w$, so we have a connected path between $v$ and $w$ in $A$ without using color $i$, a contradiction since $i$ decomposes $A$.
\end{proof}

\begin{cor}
\label{cor:everything cuts hypercube}
Every color in $[n]$ decomposes $I_c^n$.
\end{cor}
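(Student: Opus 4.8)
The plan is to recognize this as a one-line consequence of Lemma~\ref{lem:cut preserves bit} together with the identification of $I_c^n$ as the quotient by the trivial code. First I would note that, as recorded immediately after Theorem~\ref{thm:adinkraizable as quotient}, the cubical chromotopology $I_c^n$ is the $(n,0)$-chromotopology, i.e. $L(I_c^n) = \{\overrightarrow{0}\}$, the code whose only element is the zero bitstring $\overrightarrow{0_n} = 00\cdots 0$. Since every bit of $\overrightarrow{0}$ equals $0$, the hypothesis of Lemma~\ref{lem:cut preserves bit} is satisfied for every $i \in [n]$: for all $d \in L(I_c^n)$ (there is only one such $d$, namely $\overrightarrow{0}$) the $i$-th bit of $d$ is $0$. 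The lemma then immediately yields that color $i$ decomposes $I_c^n$.

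There is essentially no obstacle in this argument; the entire content lives in the preceding lemma, and the only thing to check is the routine fact that the trivial code is the code attached to $I_c^n$. In that sense the main (and only) step is simply invoking Lemma~\ref{lem:cut preserves bit} with the correct code.

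For concreteness, and as an independent sanity check, I would also sketch the direct combinatorial picture. Deleting all color-$i$ edges from the hypercube separates the vertices according to their $i$-th bit. Every edge of color $j \neq i$ preserves the $i$-th bit (it flips bit $j$), so it lies entirely within the set of vertices with $i$-th bit $0$ or entirely within the set with $i$-th bit $1$; the color-$i$ edges, on the other hand, are exactly those that flip bit $i$, hence exactly the ones bridging the two sets. Thus removing them leaves precisely two components, each isomorphic to $I_c^{n-1}$, which agrees with the conclusion obtained from the lemma.
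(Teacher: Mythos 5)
Your proof is correct and matches the paper's intended argument exactly: the corollary follows immediately from Lemma~\ref{lem:cut preserves bit} applied to the trivial code $L(I_c^n) = \{\overrightarrow{0}\}$, which is why the paper states it without proof. Your direct combinatorial sanity check is a fine addition but not a departure in approach.
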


In the situation where Lemma~\ref{lem:cut preserves bit} holds, we say that $i$ decomposes $A$ into $\piece{A}{i}{0}$ and $\piece{A}{i}{1}$, or $\cut{A}{\piece{A}{i}{0}}{\piece{A}{i}{1}}{i}$, if removing all edges with color $i$ creates two disjoint chromotopologies $\piece{A}{i}{0}$ and $\piece{A}{i}{1}$, which are labeled and colored in a natural fashion, equipped with an inclusion $\inc$ on their vertices that map into $V(A)$. Formally: 
\begin{itemize}
\item $V(A)$ can be parititioned into two sets $V(A|0)$ and $V(A|1)$, where vertices in $V(A|0)$ have $0$ in the $i$-th bit (by Lemma~\ref{lem:cut preserves bit}, this is a well-defined notion) and vertices in $V(A|1)$ have $1$ in the $i$-th bit. Furthermore, all edges between $V(A|0)$ and $V(A|1)$ are of color $i$.
\item define $\piece{A}{i}{0}$ to be isomorphic to the edge-colored graph induced by vertices in $V(A|0)$, where any bitstring $v = (b_1 b_2 \cdots b_n)$ in the vertex label class of $v' \in V(A|0)$ is sent to the $(n-1)$-bitstring $(b_1 b_2 \cdots \widehat{b_i} \cdots b_n)$, where we remove the bit $b_i$. Color the edges analogously with colors in $\{1, 2 \cdots, \widehat{i}, \cdots, n\}$. Define $\piece{A}{i}{1}$ in the same way with $V(A|1)$ instead of $V(A|0)$.
\item define the maps $\inc(b_1 b_2 \ldots b_{n-1}, j \to i) = b_1 \ldots b_{i-1} j b_i \ldots b_{n-1}$, which inserts $j$ into the $i$-th place of an $(n-1)$-bitstring to create an $n$-bitstring. If $\cut{A}{\piece{A}{i}{0}}{\piece{A}{i}{1}}{i}$, let $\inc(v)$ send a vertex $v \in \piece{A}{i}{j}$ to $\inc(v, j \to i)$ for $j \in \{0, 1\}$. Lemma~\ref{lem:cut preserves bit} gives that the union of the image of $V(\piece{A}{i}{0})$ and $V(\piece{A}{i}{1})$ under $\inc$ is exactly $V(A)$. 
\end{itemize}

If $A$ has the additional structure of a ranked chromotopology, we can say more. Suppose $\cut{A}{\piece{A}{i}{0}}{\piece{A}{i}{1}}{i}$. Now, let $z_0 = \inc({\overrightarrow{0_{n-1}}}, 0 \to i)$ and $z_1 = \inc({\overrightarrow{0_{n-1}}}, 1 \to i)$ be elements in $V(A)$. Since they are adjacent, their rank functions must differ by exactly $1$; that is, $|h(z_0) - h(z_1)| = 1$. We denote $A = \piece{A}{i}{0} \nearrow_{i} \piece{A}{i}{1}$ in the case $h(z_1) = h(z_0) + 1$ and $A = \piece{A}{i}{0} \searrow_{i} \piece{A}{i}{1}$ otherwise. See Figure~\ref{fig:3cubecut} for an example.
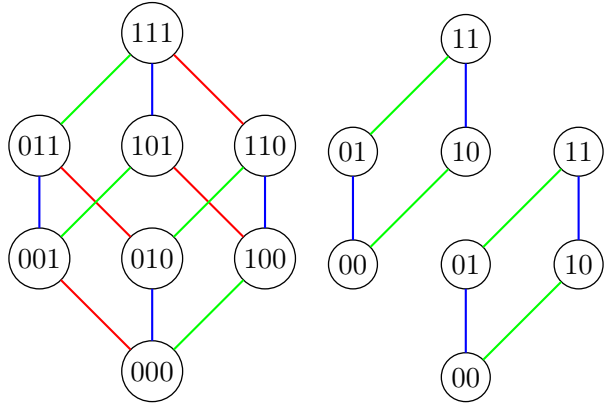
\begin{figure}[htb]
\begin{center}

\begin{tabular}{cc}
\begin{tikzpicture}[scale=0.15]
\SetUpEdge[labelstyle={draw}]
\Vertex[x=0,y=0]{000}
\Vertex[x=0,y=10]{010}
\Vertex[x=0,y=20]{101}
\Vertex[x=0,y=30]{111}
\Vertex[x=-10,y=10]{001}
\Vertex[x=10,y=10]{100}
\Vertex[x=-10,y=20]{011}
\Vertex[x=10,y=20]{110}
\Edge[color=red](100)(101)
\Edge[color=red](000)(001)
\Edge[color=red](010)(011)
\Edge[color=red](110)(111)
\Edge[color=green](000)(100)
\Edge[color=green](001)(101)
\Edge[color=green](010)(110)
\Edge[color=green](011)(111)
\Edge[color=blue](000)(010)
\Edge[color=blue](001)(011)
\Edge[color=blue](100)(110)
\Edge[color=blue](101)(111)
\end{tikzpicture}
 &

\begin{tikzpicture}[scale=0.15] 
\SetUpEdge[labelstyle={draw}]
\Vertex[x=0,y=0]{00}
\Vertex[x=0,y=10]{01}
\Vertex[x=0,y=20]{10{}} 
\Vertex[x=0,y=30]{11{}}
\Vertex[x=-10,y=10]{00{}}
\Vertex[x=10,y=10]{10}
\Vertex[x=-10,y=20]{01{}}
\Vertex[x=10,y=20]{11}
\Edge[color=green](11)(01)
\Edge[color=green](11{})(01{})
\Edge[color=green](10)(00)
\Edge[color=green](10{})(00{})
\Edge[color=blue](11)(10)
\Edge[color=blue](11{})(10{})
\Edge[color=blue](01)(00)
\Edge[color=blue](01{})(00{})
\end{tikzpicture}

\end{tabular}

\parbox{5in}{\caption{The color $3$ decomposes a ranked chromotopology $A$ (with chromotopology $I_c^3$) as $\piece{A}{3}{0} \nearrow_3 \piece{A}{3}{1}$ (we have $\nearrow$ because $000 > 001$ in $A$), each with chromotopology $I_c^2$.}
\label{fig:3cubecut}}
\end{center}
\end{figure}

\begin{prop}
\label{prop:cutting properties} Let $\cut{A}{A_0}{A_1}{i}$, where $A$ is an $(n,k)$-chromotopology. Then $A_0$ and $A_1$ are $(n-1, k)$ chromotopologies, isomorphic as graphs.
\end{prop}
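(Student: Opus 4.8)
The plan is to show that decomposing an $(n,k)$-chromotopology $A$ by a color $i$ produces two pieces $A_0$ and $A_1$ that are each $(n-1,k)$-chromotopologies, and then to exhibit an explicit graph isomorphism between them. The starting point is Lemma~\ref{lem:cut preserves bit}: since $i$ decomposes $A$, every codeword in $L = L(A)$ has $0$ in its $i$-th bit. This is precisely the statement that allows the bit-deletion map used in the definition of $\piece{A}{i}{0}$ and $\piece{A}{i}{1}$ to be well-defined on vertex-label classes.

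First I would identify the code governing each piece. Deleting the $i$-th coordinate is a linear map $\pi_i\colon \ZZ_2^n \to \ZZ_2^{n-1}$, and because every element of $L$ already has $0$ in position $i$, the restriction of $\pi_i$ to $L$ is injective; hence $\pi_i(L)$ is again a $k$-dimensional code, call it $L'$. I would then check that the edge-colored graph $\piece{A}{i}{0}$ obtained from $V(A|0)$ by deleting the $i$-th bit is exactly the quotient $I_c^{n-1}/L'$: a vertex class of $A$ lying in $V(A|0)$ is a coset $v + L$ with all representatives having $i$-th bit $0$, and deleting that bit sends it to the coset $\pi_i(v) + L'$, bijectively. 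The colored adjacency (edges of color $j \neq i$) is preserved because flipping bit $j$ commutes with deleting bit $i$. The same argument applied to $V(A|1)$ shows $\piece{A}{i}{1} \cong I_c^{n-1}/L'$ as well, using the same code $L'$ since shifting the $i$-th bit from $1$ to $0$ does not change the code after deletion. By Theorem~\ref{thm:chromotopology as quotient}, $L'$ inherits the ``even, no weight-$2$ bitstring'' property from $L$ (deleting a $0$ bit changes no weights), so both pieces are genuine $(n-1,k)$-chromotopologies.

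The isomorphism between $A_0$ and $A_1$ then falls out immediately: both equal $I_c^{n-1}/L'$ as edge-colored graphs, so the identity on $V(I_c^{n-1}/L')$ is the desired graph isomorphism. Concretely, this corresponds to the map $v \mapsto q_i(v)$ on $V(A)$, which flips the $i$-th bit and hence carries $V(A|0)$ bijectively onto $V(A|1)$ while preserving all edges of color $j \neq i$; since $q_i$ is an involution commuting with every other $q_j$, it descends to a color-preserving graph isomorphism after bit-deletion.

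The main obstacle I anticipate is the bookkeeping needed to verify that $\pi_i(L)$ is the correct code for \emph{both} halves simultaneously and that deletion genuinely preserves the colored-graph structure rather than merely the underlying graph. In particular one must confirm that no new loops or double-edges are introduced under deletion (equivalently, that $L'$ has no weight-$0$ issues beyond those already ruled out for $L$), so that the pieces remain simple chromotopologies and not merely prechromotopologies. Once the identification $\piece{A}{i}{j} \cong I_c^{n-1}/L'$ is pinned down carefully for $j \in \{0,1\}$, the counting of vertices ($2^{n-1-k}$ each) and the graph isomorphism are routine consequences.
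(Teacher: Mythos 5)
Your proof is correct, but it takes a genuinely different route from the paper's. The paper never identifies the code of the pieces: it works intrinsically in $A$, taking $q_i$ itself as the vertex bijection between $A_0$ and $A_1$, using the $2$-colored $4$-cycle condition on $(v_1, q_i(v_1), q_i(v_2), v_2)$ to see that $q_i$ carries color-$j$ edges to color-$j$ edges, and then deducing the $(n-1,k)$ claim from a counting argument (each piece is an $(n-1)$-regular chromotopology on $2^{(n-1)-k}$ vertices, so the classification forces a $k$-dimensional code). You instead route everything through the quotient classification of Theorem~\ref{thm:chromotopology as quotient}: you puncture the code at coordinate $i$, observe that $\pi_i|_L$ is injective precisely because Lemma~\ref{lem:cut preserves bit} forces the $i$-th bit of every codeword to vanish, and identify both $\piece{A}{i}{0}$ and $\piece{A}{i}{1}$ with the single quotient $I_c^{n-1}/\pi_i(L)$, after which the isomorphism (concretely, $q_i$ descending through bit-deletion) is automatic. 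Your coset bookkeeping is sound: the map $v + L \mapsto \pi_i(v) + L'$ is injective on each half because representatives of a coset share their $i$-th bit, and puncturing a coordinate that is $0$ on all of $L$ changes no weights, so evenness and the absence of weight-$2$ (hence weight-$1$) codewords pass to $L'$ and the pieces stay simple. The trade-off: the paper's argument is shorter and purely graph-theoretic, needing only the $4$-cycle axiom; yours costs more verification but buys strictly more information, namely that the pieces' code is exactly the punctured code $\pi_i(L)$ and that the isomorphism preserves the coloring by $\{1,\ldots,\widehat{i},\ldots,n\}$ --- facts the paper's counting argument leaves implicit.
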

\begin{proof}
The image of $q_i$ on $V(A_0$ is exactly $V(A_1)$ and $q_i$ is an involution, so we have a bijection between the vertices. If $q_j(v_1) = v_2$ in $A_0$, the $4$-cycle condition on $(v_1, q_i(v_1), q_i(v_2), v_2)$ gives that $(q_i(v_1), q_i(v_2))$ is also an edge of color $j$ in $A_1$, so the bijection between the vertices extends to a bijection between $A_0$ and $A_1$ as edge-colored graphs, and thus chromotopologies. Each of these chromotopologies has $2^{n-1}$ vertices and is $(n-1)$-regular, so by Proposition~\ref{prop:codes} they must be $(n-1, k)$- chromotopologies.
\end{proof}

\section{Dashing}
\label{sec:dashing}

Given an adinkraizable chromotopology $A$, define $o(A)$ to be the set of odd dashings of $A$.

\begin{open}
What are the enumerative and algebraic properties of $o(A)$? 
\end{open}

We introduce the concept of \emph{even dashings} and relate them to odd dashings, showing that not only do the even dashings form a more convenient model for calculations, there is a bijection between the two types of dashings. We will then count the number of (odd or even) dashings of $I^n_c$ via a cute application of linear algebra and generalize to all chromotopologies with a homological algebra computation.

Finally, we remark that studying dashed chromotopologies is basically equivalent to studying Clifford algebras. We discuss this further in Section~\ref{sec:clifford representations}. 

\subsection{Odd and Even Dashings}

Given an adinkraizable chromotopology $A$, even $I_c^n$, it is not intuitive if an odd dashing exists. It is somewhat surprising that one always does, given Proposition~\ref{prop:codes}. Let an \emph{even dashing} be a way to dash $E(A)$ such that every $2$-colored $4$-cycle contains an even number of dashed edges, and let $e(A)$ be the set of even dashings. We have the following nice fact:
\begin{lemma}
\label{lem:parity irrelevant}
For any adinkraizable chromotopology $A$, we have $|o(A)| = |e(A)|$.
\end{lemma}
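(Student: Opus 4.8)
The plan is to linearize the entire setup over $\ZZ_2$ so that both notions of dashing become affine subsets of one vector space. I would view a dashing as a vector in the $\ZZ_2$-vector space $\ZZ_2^{E(A)}$ of all functions $E(A) \to \ZZ_2$, and let $F$ denote the set of $2$-colored $4$-cycles of $A$. Define a $\ZZ_2$-linear map $\partial\colon \ZZ_2^{E(A)} \to \ZZ_2^{F}$ by $(\partial d)(C) = \sum_{e \in C} d(e)$; that is, for each $4$-cycle $C$, the map $\partial$ records the parity of the number of dashed edges on $C$. With this in hand the two defining conditions become transparent: an even dashing is exactly a $d$ with $\partial d = \vec{0}$, so $e(A) = \ker\partial$, while an odd dashing is exactly a $d$ with $\partial d = \vec{1}$ (the all-ones vector indexed by $F$), so $o(A) = \partial^{-1}(\vec{1})$ is the fiber of $\partial$ over $\vec{1}$.

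Once phrased this way, the result is pure coset arithmetic. First I would note that $e(A) = \ker\partial$ is a linear subspace, hence nonempty (it contains the zero dashing). The only genuine input is that $o(A) \neq \emptyset$, and this is handed to us by the hypothesis: since $A$ is adinkraizable, by definition it admits an odd dashing $d_0 \in o(A)$. Because a nonempty fiber of a linear map is a coset of its kernel, we get $o(A) = d_0 + \ker\partial = d_0 + e(A)$. Concretely, the translation $d \mapsto d_0 + d$ (pointwise XOR) maps $e(A)$ into $o(A)$, since $\partial(d_0 + d) = \vec{1} + \vec{0} = \vec{1}$ whenever $\partial d = \vec{0}$; and this map is its own inverse because $d_0 + d_0 = 0$ over $\ZZ_2$. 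A self-inverse map is a bijection, so $e(A)$ and $o(A)$ have the same cardinality.

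I do not expect any real obstacle here: the substantive content is entirely in the nonemptiness of $o(A)$, which adinkraizability supplies for free, and everything else is the observation that the odd and even conditions are the two fibers of a single $\ZZ_2$-linear map over $\vec{1}$ and $\vec{0}$ respectively. The one point to state carefully is that membership in $o(A)$ really is the affine-linear condition $\partial d = \vec{1}$ (and not something subtler), but this is immediate from the definition of an odd dashing. This linear-algebraic reformulation also sets up the later sections cleanly, since counting $e(A)$ then reduces to understanding $\ker\partial$, i.e. the homology of the complex $X(A)$.
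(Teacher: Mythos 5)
Your proof is correct and follows essentially the same route as the paper: both identify even dashings as the kernel (the paper phrases this as an intersection of hyperplanes, which is exactly your $\ker\partial$) and odd dashings as a coset of it, with adinkraizability supplying the one nontrivial input, namely that $o(A)$ is nonempty. Your fiber-of-a-linear-map packaging is a clean restatement of the paper's coset argument, not a genuinely different method.
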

\begin{proof}
Let $l = |E(A)|$. We may consider a dashing of $A$ as a vector in $\ZZ_2^l$, where each coordinate corresponds to an edge and is assigned $1$ for a dashed edge and $0$ for a solid edge. There's an obvious way to add two dashings (i.e. addition in $\ZZ_2^l$) and there is a zero vector $d_0$ (all edges solid), so the family of all dashings (with no restrictions) form a vector space $V_{\text{free}}(n)$ of dimension $l$.

Now, the crucial observation is that $e(A)$ has the structure of a subspace of $V_{\text{free}}(n)$. To see this, we can directly check that adding two even dashings preserve the even parity of each $2$-colored $4$-cycle and that $d_0$ is an even dashing. Alternatively, we can note the restriction of a dashing $d$ having a particular cycle with an even number of dashes just means the inner product of $d$ and some vector with four 1's as support is zero, so such dashings are exactly the intersection of $V_{\text{free}}(n)$ and a set of hyperplanes, which is a subspace. 

Unlike the even dashings $e(A)$, the odd dashings $o(A)$ do not form a vector space; in particular, they do not include $d_0$. However, adding an even dashing to an odd dashing gives an odd dashing and the difference between any two odd dashings gives an even dashing. Thus, $o(A)$ is a coset in $V_{\text{free}}(n)$ of $e(A)$ and must then have the same cardinality as $e(A)$ given that at least one odd dashing exists. Since $A$ is adinkraizable by definition, we are done.
\end{proof}

The proof of Lemma~\ref{lem:parity irrelevant} shows that the odd dashings form a \emph{torsor} for the even dashings, which are easier to work with. 

\subsection{Decompositions and Dashing $I_c^n$}

We start by looking at the $n$-cubical chromotopology $I_c^n$. The main simplification here is that dashings behave extremely well under decompositions.

\begin{lemma}
\label{lem:even dashings and cuts}
If $A$ has $l$ edges colored $i$ and $\cut{I_c^n}{A_0}{A_1}{1}$, then each even (resp. odd) dashing of the induced graph of $A_0$ and each of the $2^l$ choices of dashing the $i$-colored edges extends to exactly one even (resp. odd) dashing of $A$.
\end{lemma}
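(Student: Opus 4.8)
The plan is to exploit the decomposition structure via a forced-extension argument carried out edge by edge on $A_1$, followed by a single global consistency check. Throughout I take the decomposition color to be $1$ (matching the notation $I_c^n = A_0 \amalg_1 A_1$; the $l = 2^{n-1}$ edges of that color are exactly the edges joining $A_0$ and $A_1$, and these are the ``$i$-colored edges'' whose dashing is chosen freely). The first step is to classify the $2$-colored $4$-cycles of $I_c^n$ by how they meet the decomposition. A cycle on colors $j,k$ with $j,k \neq 1$ lies entirely inside $A_0$ or entirely inside $A_1$. A cycle involving color $1$ is a \emph{crossing} cycle: using colors $1$ and $k$, it has the form $(v, q_1(v), q_1 q_k(v), q_k(v))$ with $v, q_k(v) \in V(A_0)$ and $q_1(v), q_1 q_k(v) \in V(A_1)$, and it consists of one color-$k$ edge of $A_0$, the corresponding color-$k$ edge of $A_1$, and two color-$1$ edges.

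By Proposition~\ref{prop:cutting properties}, $q_1$ is a color-preserving isomorphism $A_0 \to A_1$, so every edge of $A_1$ is $(q_1(v), q_1(w))$ for a unique edge $(v,w)$ of $A_0$, and it lies in exactly one crossing $4$-cycle, determined by $(v,w)$ together with the two color-$1$ edges at $v$ and $w$. Given an even (resp. odd) dashing $d_0$ of $A_0$ and an arbitrary dashing of the $l$ color-$1$ edges, I would define the dashing of each edge of $A_1$ by solving the single linear equation that its crossing cycle imposes: the even (resp. odd) condition rewrites as $d(q_1(v), q_1(w)) = d_0(v,w) + d(v, q_1(v)) + d(w, q_1(w))$ (plus an extra $1 \in \ZZ_2$ in the odd case). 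This determines the dashing of $A_1$ uniquely, and by construction all crossing cycles then have the prescribed parity; any two distinct choices of $(d_0, \text{color-}1\text{ dashing})$ plainly produce distinct dashings of $A$.

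The only nontrivial point — and the main obstacle — is verifying that the $4$-cycles lying entirely inside $A_1$ also receive the correct parity, since their edge dashings were forced rather than chosen. For this I would take such a cycle, namely the $q_1$-image of an $A_0$ cycle on colors $j,k$ through a base vertex $v$, and sum the four defining equations around it. Each of the four vertices $v, q_j(v), q_k q_j(v), q_k(v)$ of the $A_0$ cycle is an endpoint of exactly two of the four edges, so each color-$1$ dashing appears twice and all color-$1$ contributions vanish modulo $2$; in the odd case the four extra $1$'s likewise sum to $0$. What survives is exactly the sum of $d_0$ over the corresponding $4$-cycle of $A_0$, which is $0$ (even case) or $1$ (odd case) since $d_0$ is a valid dashing. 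Hence the interior cycle of $A_1$ inherits the correct parity, so the forced extension is a genuine even (resp. odd) dashing of $A$. As every even (resp. odd) dashing of $A$ restricts to a pair $(d_0, \text{color-}1\text{ dashing})$, the correspondence is a bijection, giving the ``exactly one'' conclusion.
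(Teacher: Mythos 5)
Your proposal is correct and follows essentially the same route as the paper: classify the $2$-colored $4$-cycles into those interior to $A_0$, interior to $A_1$, and crossing; force the dashing of each $A_1$ edge via its unique crossing cycle; and verify the interior $A_1$ cycles by summing the forced equations modulo $2$ so the color-$1$ contributions cancel. The only cosmetic difference is that you sum the four crossing-cycle equations and substitute the known parity of the corresponding $A_0$ cycle, whereas the paper sums five faces of the relevant sub-hypercube (including the $A_0$ face) to deduce the sixth --- the same computation in a slightly different packaging.
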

\begin{proof}
Without loss of generality, we can take $i = 1$, so $A_0$ contains equivalence classes of bitstrings with first bit $0$ and $A_1$ contains those with first bit $1$.

After an even dashing of $A_0$ and an arbitrary dashing of the $i$-colored edges, note the remaining $2$-colored $4$-cycles are of exactly two types:
\begin{enumerate}
\item the $4$-cycles in $A_1$;
\item the $4$-cycles of the form $(u,v,w,x)$, where $(u,v)$ is in $A_0$, $(w,x)$ is in $A_1$, and $(v,w)$ and $(x,u)$ are colored $i$. 
\end{enumerate}

Note that in all the cycles $(u,v,w,x)$ of the second type, $(w,x)$ is the only one we have not selected. Thus, there is exactly one choice for each of those edges to satisfy the even parity condition. Since there is exactly one such cycle for every edge in $A_1$, this selects a dashing for all the remaining edges, and the only thing we have to check is that the $4$-cycles of the first type, the ones entirely in $A_1$, are evenly dashed. 

Now, a $4$-cycle of this type is of form $(1a_1, 1a_2, 1a_3, 1a_4)$, which is a face of a hypercube with vertices $(0a_1, 0a_2, 0a_3, 0a_4, 1a_1, 1a_2, 1a_3, 1a_4)$. There are $5$ other $4$-cycles in this hypercube which have all been evenly dashed (the $0a_i$ vertices form a cycle in $A_0$ and the other $4$ cycles are evenly dashed by our previous paragraph). Thus, we have that:
\begin{align*}
d(0a_1, 0a_2) + d(0a_2, 0a_3) + d(0a_3, 0a_4) + d(0a_4, 0a_1) & = 0; \\
d(0a_1, 0a_2) + d(0a_2, 1a_2) + d(1a_2, 1a_1) + d(1a_1, 0a_1) & = 0; \\
d(0a_2, 0a_3) + d(0a_3, 1a_3) + d(1a_3, 1a_2) + d(1a_2, 0a_2) & = 0; \\
d(0a_3, 0a_4) + d(0a_4, 1a_4) + d(1a_4, 1a_3) + d(1a_3, 0a_3) & = 0; \\
d(0a_4, 0a_1) + d(0a_1, 1a_1) + d(1a_1, 1a_4) + d(1a_4, 0a_4) & = 0.
\end{align*}
Adding these equations in $\ZZ_2$ gives:
\[
d(1a_1, 1a_2) + d(1a_2, 1a_3) + d(1a_3, 1a_4) + d(1a_4, 1a_1) = 0.
\]
Thus, we have constructed an even dashing. The analogous result for odd dashings follow by the same logic if we replace $0$'s on the right sides of the above equations by $1$'s.
\end{proof}

\begin{prop}
\label{thm:cube dashings}
The number of even (or odd) dashings of $I_c^n$ is  
\[
|e(I_c^n)| = |o(I_c^n)| = 2^{2^n-1}.
\]
\end{prop}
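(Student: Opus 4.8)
The plan is to prove the formula by induction on $n$, using the decomposition result Lemma~\ref{lem:even dashings and cuts} as the engine of the inductive step; by Lemma~\ref{lem:parity irrelevant} it suffices to count the even dashings, and I will work with $|e(I_c^n)|$ throughout. The base case is $n = 1$: here $I_c^1$ is a single edge, there are no $2$-colored $4$-cycles to constrain anything, so every one of the $2 = 2^{2^1 - 1}$ assignments of dash/solid is vacuously an even dashing. (One could equally well start at $n = 0$, a single vertex with the empty dashing, giving $1 = 2^{2^0 - 1}$.)

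For the inductive step, I would fix any color, say color $1$. By Corollary~\ref{cor:everything cuts hypercube} this color decomposes $I_c^n$, say $\cut{I_c^n}{A_0}{A_1}{1}$, and by Proposition~\ref{prop:cutting properties} each of $A_0, A_1$ is an $(n-1,0)$-chromotopology, i.e.\ isomorphic to $I_c^{n-1}$. The one genuinely arithmetic input is counting the color-$1$ edges: since the edges of a single color form a perfect matching on the $2^n$ vertices, there are exactly $l = 2^{n-1}$ of them. Now Lemma~\ref{lem:even dashings and cuts} tells me that each even dashing of $A_0 \cong I_c^{n-1}$ together with each of the $2^l = 2^{2^{n-1}}$ ways of dashing the color-$1$ edges extends uniquely to an even dashing of $I_c^n$, and conversely every even dashing of $I_c^n$ arises this way. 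This yields the recursion
\[
|e(I_c^n)| = 2^{2^{n-1}} \cdot |e(I_c^{n-1})|.
\]

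Feeding in the inductive hypothesis $|e(I_c^{n-1})| = 2^{2^{n-1} - 1}$ gives
\[
|e(I_c^n)| = 2^{2^{n-1}} \cdot 2^{2^{n-1} - 1} = 2^{2\cdot 2^{n-1} - 1} = 2^{2^n - 1},
\]
completing the induction. Finally I invoke Lemma~\ref{lem:parity irrelevant} to transfer the count to odd dashings, establishing $|o(I_c^n)| = |e(I_c^n)| = 2^{2^n-1}$.

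Honestly there is no serious obstacle here once Lemma~\ref{lem:even dashings and cuts} is in hand, since that lemma already does the real combinatorial work of showing the extension is free and unique. The only points requiring care are bookkeeping ones: correctly identifying $l = 2^{n-1}$ as the number of same-colored edges, checking that the decomposition pieces really are copies of $I_c^{n-1}$ so that the inductive hypothesis applies, and handling the degenerate base case where no $2$-colored $4$-cycle exists so that the even-dashing condition is vacuous.
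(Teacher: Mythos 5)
Your proof is correct and follows essentially the same route as the paper's: induction via the color decomposition of $I_c^n$ (Corollary~\ref{cor:everything cuts hypercube}), the extension Lemma~\ref{lem:even dashings and cuts} with $l = 2^{n-1}$ color-$1$ edges yielding the recurrence $|e(I_c^n)| = 2^{2^{n-1}}|e(I_c^{n-1})|$, and Lemma~\ref{lem:parity irrelevant} to pass from even to odd dashings. The only difference is cosmetic bookkeeping, such as your explicit remark on the $n=0$ case and your citation of Proposition~\ref{prop:cutting properties} to identify the pieces as copies of $I_c^{n-1}$.
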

\begin{proof}
A convenient property of hypercubes is that every $4$-cycle is a $2$-colored $4$-cycle. Thus, we get to just say ``$4$-cycles'' instead of ``$2$-colored $4$-cycles'' in this proof.

We prove our result by induction. The base case is easy: for $n=1$ (a single edge), there are exactly $2$ even dashings, since there is no $4$-cycle. Suppose our result were true for every $k < n$. We will now show it is also true for $n$. Recall from Corollary~\ref{cor:everything cuts hypercube} that every color decomposes $I_c^n$. Let $\cut{I_c^n}{A^0}{A^1}{1}$, where both $A^i$ have the topology of $I_c^{n-1}$.

Since we have $2^{n-1}$ edges with color $1$, by Lemma~\ref{lem:even dashings and cuts} we get the recurrence
\[
|e(I_c^n)| = 2^{2^{n-1}}|e(I_c^{n-1})|.
\]
With the initial case $|e(I_c^1)| = 2$, we get $|e(I_c^n)| = 2^{2^{n-1} + 2^{n-2} + \cdots + 1} = 2^{2^n-1}$, as desired. The result for $|o(I_c^n)|$ is immediate by Lemma~\ref{lem:parity irrelevant}.
\end{proof}

Note that $|o(I_c^n)| = 2^{2^n}/2$. This suggests that, besides a single factor of $2$, each of the $2^n$ vertices gives exactly one ``degree of freedom'' for odd dashings. We will justify this hunch in the following discussion, in particular with Proposition~\ref{prop:switching freedom}.

\subsection{Vertex Switching}

In \cite{douglas}, Douglas, Gates, and Wang examined dashings from a point of view inspired by Seidel's \emph{two-graphs} (\cite{seidel:survey}). Define the \emph{vertex switch} at a vertex $v$ of a dashed chromotopology $A$ as the operation that produces the same $A$, except with all edges adjacent to $v$ flipped in parity (sending dashed edges to solid edges, and vice-versa). It is routine to verify that a vertex switch preserves odd dashings (in fact, parity in all $4$-cycles), so the odd dashings of $A$ can be split into orbits under all possible vertex switchings, which we will call the \emph{labeled switching classes} (or \emph{LSCs}) of $A$. We emphasize the adjective ``labeled'' because the term \emph{switching class} in \cite{douglas} refers to equivalence classes not only under vertex switchings, but also under different types of vertex permutations.

\begin{figure}[htb]
\begin{center}

\begin{tabular}{cc}
\begin{tikzpicture}[scale=0.15]
\SetVertexNoLabel
\SetUpEdge[labelstyle={draw}]
\Vertex[x=0,y=0]{111}
\Vertex[x=0,y=10]{101}
\Vertex[x=0,y=20]{010}
\Vertex[x=0,y=30]{000}
\Vertex[x=-10,y=10]{110}
\Vertex[x=10,y=10]{011}
\Vertex[x=-10,y=20]{100}
\SetVertexNormal[LineWidth=3]
\Vertex[x=10,y=20]{001}
\SetVertexNormal
\Edge[color=red](100)(101)
\Edge[color=red](000)(001)
\Edge[color=red](010)(011)
\Edge[color=red](110)(111)
\Edge[color=green](000)(100)
\Edge[color=green, style=dashed](001)(101)
\Edge[color=green](010)(110)
\Edge[color=green, style=dashed](011)(111)
\Edge[color=blue](000)(010)
\Edge[color=blue, style=dashed](001)(011)
\Edge[color=blue, style=dashed](100)(110)
\Edge[color=blue](101)(111)
\end{tikzpicture}
& 
\begin{tikzpicture}[scale=0.15]
\SetVertexNoLabel
\SetUpEdge[labelstyle={draw}]
\Vertex[x=0,y=0]{111}
\Vertex[x=0,y=10]{101}
\Vertex[x=0,y=20]{010}
\Vertex[x=0,y=30]{000}
\Vertex[x=-10,y=10]{110}
\Vertex[x=10,y=10]{011}
\Vertex[x=-10,y=20]{100}
\SetVertexNormal[LineWidth=3]
\Vertex[x=10,y=20]{001}
\Edge[color=red](100)(101)
\Edge[color=red, style=dashed](000)(001)
\Edge[color=red](010)(011)
\Edge[color=red](110)(111)
\Edge[color=green](000)(100)
\Edge[color=green](001)(101)
\Edge[color=green](010)(110)
\Edge[color=green, style=dashed](011)(111)
\Edge[color=blue](000)(010)
\Edge[color=blue](001)(011)
\Edge[color=blue, style=dashed](100)(110)
\Edge[color=blue](101)(111)
\end{tikzpicture}
\end{tabular}
\caption{Before and after a vertex switch at the outlined vertex.\label{fig:switching}}
\end{center}
\end{figure}
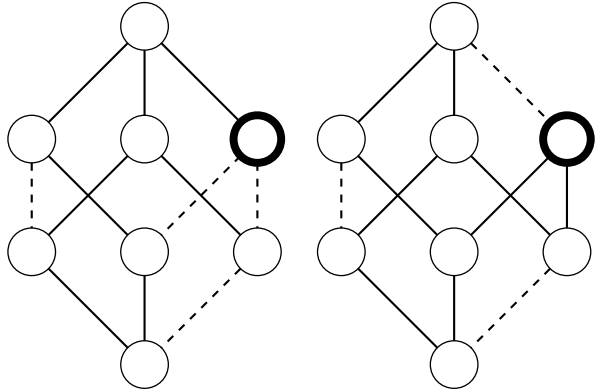

In the representation theory interpretation of adinkras (see Section~\ref{sec:physical motivation}), a vertex switch corresponds to adding a negative sign in front of a component field, which gives an isomorphic\footnote{We talk more about isomorphisms in \ref{sec:clifford representations}. In fact, it is precisely because vertex permutations also give isomorphic representations under this definition that \cite{douglas} uses coarser equivalence classes than we do here.} representation. Thus, it is natural to think about equivalence classes under these transformations. Furthermore, studying vertex switchings will also enable us to better understand the enumeration of dashings (beyond those of $I_c^n$, which we now understand very well). The following computation will not only be useful to study switchings, but will also justify our hunch about the ``degrees of freedom'' from Theorem~\ref{thm:cube dashings}.

\begin{prop}
\label{prop:switching freedom}
In an adinkraizable $(n,k)$-chromotopology $A$, there are exactly $2^{2^{n-k}-1}$ dashings in each LSC.
\end{prop}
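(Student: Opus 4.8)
The plan is to recognize the group of all vertex switchings as a \emph{linear} action on the space of dashings, identify each LSC with a coset of the image of that action, and then compute the image size via rank–nullity. Following the setup of Lemma~\ref{lem:parity irrelevant}, I would regard a dashing as a vector in $V_{\text{free}}(n) = \ZZ_2^{|E(A)|}$. A single switch at a vertex $v$ adds to the current dashing the indicator vector $\delta_v \in \ZZ_2^{|E(A)|}$ of the edges incident to $v$. Since switches are commuting involutions, applying the switches prescribed by a subset $S \subseteq V(A)$ simply adds $\sum_{v \in S}\delta_v$. Hence the entire collection of switching operations is the image of the linear (coboundary) map $\partial^*\colon \ZZ_2^{V(A)} \to \ZZ_2^{|E(A)|}$ sending the indicator of $S$ to $\sum_{v\in S}\delta_v$; concretely, on an edge $e = (u,w)$ the $e$-coordinate of $\partial^*(x)$ is $x_u + x_w$.

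Next I would use the fact (noted in the text) that switching preserves the odd-dashing condition, so $\partial^*$ acts on $o(A)$ by translation and the LSC of a dashing $d$ is exactly the coset $d + \Img(\partial^*)$. In particular every LSC is a full translate of $\Img(\partial^*)$, so all LSCs have the same size, namely $|\Img(\partial^*)|$, and it remains only to compute this number.

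The single piece of real content is the computation of $\ker(\partial^*)$. An element $x \in \ZZ_2^{V(A)}$ lies in the kernel iff $x_u = x_w$ for every edge $(u,w)$, i.e. iff $x$ is constant on each connected component of $A$. Because $A$ is connected, $\ker(\partial^*) = \{\vec{0}, \vec{1}\}$ has order $2$; its nontrivial element is ``switch at every vertex,'' which flips each edge twice and therefore acts trivially. By rank–nullity, together with the fact that $A = I_c^n/L$ has $2^{n-k}$ vertices,
\[
|\Img(\partial^*)| = \frac{|\ZZ_2^{V(A)}|}{|\ker(\partial^*)|} = \frac{2^{2^{n-k}}}{2} = 2^{2^{n-k}-1},
\]
which is the claimed count for each LSC.

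The main obstacle is really just the kernel computation, and it is precisely here that connectivity of $A$ is indispensable: were $A$ disconnected with $c$ components, the kernel would have order $2^c$ and the orbit size would drop accordingly. Everything else — identifying the switching action as linear, and recognizing LSCs as cosets of the image — is bookkeeping once the coboundary map $\partial^*$ is written down.
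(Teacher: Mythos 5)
Your proof is correct and takes essentially the same route as the paper: both arguments reduce to showing that the only set of vertex switches fixing a dashing is the empty set or all of $V(A)$ (your kernel computation for the coboundary map $\partial^*$ is exactly the paper's stabilizer argument, with connectivity doing the same indispensable work), and then counting an orbit of size $2^{2^{n-k}}/2$. Your rank--nullity phrasing via $\Img(\partial^*)$ and the identification of each LSC as a coset $d + \Img(\partial^*)$ is just a linear-algebraic restatement of the paper's orbit--stabilizer bookkeeping.
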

\begin{proof}
Vertex switches commute and each vertex switch is an order-$2$ operation, so they form a $\ZZ_2$-vector space, which we may index by subsets of the vertices. Consider a set of vertex switches that fix a dashing. Then, each edge must have its two vertices both switched or both non-switched. This decision can only be made consistently over all vertices if all vertices are switched or all vertices are non-switched. Thus, the $2^{n-k}$ sets of vertex switches  generate a $\ZZ_2$-vector space of dimension $2^{n-k}-1$. This proves the result.
\end{proof}

\begin{cor}
The cubical chromotopology $I^n_c$ has exactly one labeled switching class.
\end{cor}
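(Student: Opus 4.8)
The plan is to derive the statement as an immediate numerical consequence of the two preceding propositions, so that essentially all of the real work has already been carried out. First I would record that $I_c^n$ is precisely the $(n,0)$-chromotopology: it corresponds to the trivial code $\{\overrightarrow{0}\}$, so here $k = 0$ and in particular $n - k = n$. This is the only ``identification'' step, and it is the reason the counts will line up exactly.

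Next I would apply Proposition~\ref{prop:switching freedom} with $k = 0$. It tells us that every labeled switching class of $I_c^n$ contains exactly $2^{2^{n-k}-1} = 2^{2^n - 1}$ dashings. Since the labeled switching classes are by definition the orbits of $o(I_c^n)$ under vertex switching, they partition the set of odd dashings into nonempty blocks, each of this common size. I would then invoke Proposition~\ref{thm:cube dashings}, which gives $|o(I_c^n)| = 2^{2^n - 1}$. Hence the cardinality of the whole set of odd dashings equals the size of a single labeled switching class; since the classes partition $o(I_c^n)$ into blocks all of this size, there can be only one block, and so $I_c^n$ has exactly one labeled switching class.

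As for difficulty, there is essentially no obstacle here beyond bookkeeping, since the content is entirely front-loaded into Propositions~\ref{thm:cube dashings} and \ref{prop:switching freedom}. The one point deserving a moment's care is that dividing the total count by the orbit size is only legitimate if the orbit size is genuinely uniform across all classes. This is guaranteed by the proof of Proposition~\ref{prop:switching freedom}, which shows that the stabilizer of every dashing under the vertex-switching group is the same two-element subgroup (generated by the all-vertices switch), so that all orbits have the identical size $2^{2^{n-k}-1}$. With that observation in hand the corollary follows at once.
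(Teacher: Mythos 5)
Your proof is correct and is exactly the paper's argument: the paper also deduces the corollary immediately from Proposition~\ref{prop:switching freedom} and Proposition~\ref{thm:cube dashings} with the substitution $k=0$. Your extra remark on the uniformity of orbit sizes is a fine point of care, though it is already built into the statement of Proposition~\ref{prop:switching freedom}.
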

\begin{proof}
This is immediate from Proposition~\ref{prop:switching freedom} and Theorem~\ref{thm:cube dashings}, with the substitution $k = 0$. Alternatively, this is also evident from \cite[Lemma 4.1]{douglas}.
\end{proof}

\subsection{A Homological Computation}
\label{sec:homology}

Finally, we combinine several ideas (even dashings, vertex switchings, and our cell complex interpretation of chromotopologies) to generalize Theorem~\ref{thm:cube dashings}.



\begin{prop}
\label{prop:labeled switching classes}
Let $A$ be an adinkraizable $(n,k)$-chromotopology. Then there are $2^k$ LSCs in $A$. 
\end{prop}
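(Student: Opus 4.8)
The plan is to translate the whole problem into the $\ZZ_2$-cochain complex of the cell complex $X(A)$ and then read off the answer as the order of a single cohomology group. First I would observe that a dashing is literally a $1$-cochain: identifying a dashing with its indicator vector in $\ZZ_2^{E(A)} = C^1$, the \emph{even} dashing condition says exactly that the sum of $d$ over the boundary of every $2$-cell (i.e. every $2$-colored $4$-cycle) vanishes, which is the cocycle condition $\delta^1 d = 0$. Hence $e(A) = Z^1(X(A);\ZZ_2)$, the space of $1$-cocycles. Next I would identify vertex switches with coboundaries: switching at a vertex $v$ adds to $d$ the cochain $\delta^0(\mathbf 1_v)$, whose value on an edge $e$ is $1$ precisely when $v$ is an endpoint of $e$. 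Thus the group generated by all vertex switches is $B^1 = \Img \delta^0$, the space of $1$-coboundaries.

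With these identifications the count is immediate. By Lemma~\ref{lem:parity irrelevant} the odd dashings form a single coset $d_0 + e(A) = d_0 + Z^1$ for any fixed odd dashing $d_0$. Two odd dashings lie in the same LSC if and only if they differ by a coboundary, so the LSCs are in bijection with the cosets $Z^1 / B^1 = H^1(X(A);\ZZ_2)$. Therefore the number of LSCs equals $|H^1(X(A);\ZZ_2)|$, and it remains only to compute this group. (As a cross-check with Proposition~\ref{prop:switching freedom}: since $\ker \delta^0$ consists of the constant $0$-cochains, which form a copy of $\ZZ_2$ on the connected graph $A$, we get $|B^1| = 2^{|V(A)|-1} = 2^{2^{n-k}-1}$, exactly the size computed there for each LSC.)

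For the cohomology computation I would invoke Proposition~\ref{prop:covering space}: $X(I_c^n)$ is a simply-connected covering space of $X(A)$ with deck group $L$, so it is the universal cover and $\pi_1(X(A)) \cong L \cong \ZZ_2^k$. Abelianizing gives $H_1(X(A);\ZZ) \cong \ZZ_2^k$, and since $X(A)$ is connected (so $H_0$ is free) the universal coefficient theorem yields $H^1(X(A);\ZZ_2) \cong \on{Hom}(\ZZ_2^k,\ZZ_2) \cong \ZZ_2^k$, a group of order $2^k$. This produces exactly $2^k$ labeled switching classes. The case $k=0$ correctly recovers the single LSC of $I_c^n$.

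I expect the subtle part to be the homological bookkeeping rather than any hard idea: one must get the direction of the boundary maps right (so that even dashings land in $Z^1$ and switches in $B^1$), and must be careful with $\ZZ_2$ versus $\ZZ$ coefficients when passing from $\pi_1 \cong \ZZ_2^k$ through abelianization to $H^1$. Once the covering-space picture of Proposition~\ref{prop:covering space} pins down $\pi_1(X(A))$, the rest is routine linear algebra over $\ZZ_2$ together with the torsor structure from Lemma~\ref{lem:parity irrelevant}.
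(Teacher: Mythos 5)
Your proof is correct, and its topological core coincides with the paper's: both interpret dashings as $\ZZ_2$-valued one-dimensional (co)chains on the complex $X(A)$, both use Lemma~\ref{lem:parity irrelevant} to pass between odd and even dashings, and both get the decisive input from Proposition~\ref{prop:covering space}, namely $\pi_1(X(A)) \cong L \cong \ZZ_2^k$. Where you genuinely differ is in how the orbit count is organized. The paper works with chains: it computes $\dim e(A) = \dim H_1 + \dim C_0 - \dim H_0$ by rank--nullity, then divides by the per-orbit size $2^{2^{n-k}-1}$ supplied by Proposition~\ref{prop:switching freedom}, so that proposition is a genuine ingredient of its argument. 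You instead dualize: even dashings become the cocycles $Z^1$, vertex switches become exactly the coboundaries $B^1 = \Img(\delta^0)$ (a clean fact the paper never states explicitly), and the LSCs are then literally the cosets of $B^1$ in the torsor $d_0 + Z^1$, in bijection with $Z^1/B^1 = H^1(X(A);\ZZ_2)$ --- no division step, and Proposition~\ref{prop:switching freedom} is demoted to a consistency check via $\dim B^1 = |V(A)| - 1$, since $\ker \delta^0$ is the constants on a connected graph. This buys independence from Proposition~\ref{prop:switching freedom} (your argument in effect reproves it) and makes the identification of switching classes with a cohomology group structural rather than numerical, so it would survive in settings where orbit sizes were not all equal. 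The small cost is extra bookkeeping at the end: landing in cohomology rather than homology, you need the universal coefficient theorem (with the $\on{Ext}$ term vanishing because $H_0$ is free) to convert $\pi_1 \cong \ZZ_2^k$ into $|H^1(X(A);\ZZ_2)| = 2^k$, whereas the paper needs only abelianization; over $\ZZ_2$ on a finite complex the two are of course dimensionally equivalent. One hypothesis you use silently but correctly: $A$ is a simple graph (guaranteed by adinkraizability), so $\delta^0(\mathbf{1}_v)$ really is the indicator cochain of the edges at $v$, and each $2$-cell boundary consists of four distinct edges, so the even-dashing condition is exactly $\delta^1 d = 0$.
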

\begin{proof}
First, vertex switchings preserve parity of all $4$-cycles, so counting orbits of odd dashings (LSCs) under vertex switchings is equivalent to counting orbits of even dashings.

An even dashing can also be thought of as a formal sum of edges over $\ZZ_2$ (we dash an edge if the coefficient is $1$ and do not otherwise), which is precisely a $1$-chain of $X(A)$ over $\ZZ_2$. Second, the even dashings are defined as dashings where all $2$-colored $4$-cycles have sum $0$. Since these $4$-cycles, as elements of $C_1$, are exactly the boundaries of $C_2$, the even dashings are exactly the orthogonal complement of $\Img(d_2)$ inside of $C_1$ by the usual inner product. Thus, the even dashings have $\ZZ_2$-dimension:
\begin{align*}
\dim((\Img(d_2)^\perp) & = \dim(C_1) - \dim(\Img(d_2)) \\
& = (\dim(\ker(d_1))  + \dim(\Img(d_1))) - \dim(\Img(d_2))\\
& = \dim(H_1) + \dim(\Img(d_1)) \\
& = \dim(H_1) + \dim(C_0) - \dim(H_0). 
\end{align*}

However, note that $\dim(C_0) - \dim(H_0) = 2^{n-k}-1$, which is exactly the dimension of the vector space of the vertex switchings for a particular LSC from Proposition~\ref{prop:switching freedom}. Since the product of the number of LSCs and the number of vertex switchings per LSC is the total number of even dashings, dividing the number of even dashings by $2^{2^{n-k}-1}$ gives that the dimension of switching classes is precisely $\dim(H_1)$.

By Proposition~\ref{prop:covering space} and basic properties of universal covers and fundamental groups, $\pi_1(X(A)) = L$, the quotient group, which in this case is the vector space $\ZZ_2^k$. Also, $H_1 = \ZZ_2^k$ since $H_1$ is the abelianization of $\pi_1$. Thus, we have $2^k$ switching classes.
\end{proof}

Propositions~\ref{prop:labeled switching classes} and \ref{prop:switching freedom} immediately give:

\begin{theorem}
\label{thm:general dashings}
The number of even (or odd) dashings of an adinkraizable $(n,k)$-chromotopology $A$ is  
\[
|e(A)| = |o(A)| = 2^{2^{n-k}+k-1}.
\]
\end{theorem}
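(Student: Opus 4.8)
The plan is to obtain the count as a product of two quantities that have already been computed in the two immediately preceding propositions: the number of labeled switching classes and the common size of each such class. First I would recall the group-action setup: vertex switchings preserve the parity of every $2$-colored $4$-cycle, so they act on the set $o(A)$ of odd dashings and partition it into orbits, namely the LSCs. This is precisely the framework under which Propositions~\ref{prop:switching freedom} and \ref{prop:labeled switching classes} were established, so the total number of odd dashings factors as
\[
|o(A)| = (\text{number of LSCs}) \cdot (\text{size of each LSC}).
\]

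Next I would feed in the two propositions. Proposition~\ref{prop:switching freedom} gives that every LSC has exactly $2^{2^{n-k}-1}$ elements; the essential feature is that this size is \emph{uniform} across all classes, which is what makes the factorization above legitimate rather than forcing a sum over orbits of differing sizes. Proposition~\ref{prop:labeled switching classes} then counts the classes themselves as $2^k$. Multiplying these,
\[
|o(A)| = 2^k \cdot 2^{2^{n-k}-1} = 2^{2^{n-k}+k-1}.
\]
Finally, to carry the result over from odd to even dashings I would invoke Lemma~\ref{lem:parity irrelevant}, whose torsor argument yields $|e(A)| = |o(A)|$, giving the claimed formula for both kinds of dashing simultaneously.

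As for the main obstacle: at this stage there is essentially none left, since all the substantive work lives in the two propositions being cited—the homological computation identifying the number of classes with $\dim H_1 = k$, and the free-action argument pinning down the class size as $2^{2^{n-k}-1}$. The only point that genuinely demands care is verifying that the switching-class sizes are uniform, so that counting by a single multiplication is valid; this is exactly what Proposition~\ref{prop:switching freedom} supplies, its statement being independent of which particular class one examines. The theorem is therefore a bookkeeping corollary of the preceding analysis.
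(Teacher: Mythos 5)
Your proposal is correct and matches the paper's own derivation, which obtains the theorem as an immediate consequence of Propositions~\ref{prop:labeled switching classes} and \ref{prop:switching freedom} together with Lemma~\ref{lem:parity irrelevant}. Your added remark that the uniformity of the LSC sizes is what licenses the single multiplication is exactly the right point of care, and nothing further is needed.
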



\section{Ranking}
\label{sec:ranking}
 
In Section~\ref{sec:dashing}, we looked at the set of dashings we can put on a chromotopology to make it dashed. In this section, we look at the set of rank functions we can put on a chromotopology\footnote{All results in this section hold for bipartite prechromotopologies since the only requirement we have is bipartiteness. However, we will keep it simple and just talk about chromotopologies.} to make it ranked. Fix a chromotopology $A$. Call the set of all ranked chromotopologies with the same chromotopology as $A$ the \emph{rank family} $R(A)$ and the elements of $R(A)$ \emph{rankings} of $A$. Figure~\ref{fig:2cube rank family} shows the rank family of $I^2$.

\begin{figure}[htb]
\begin{center}

\begin{tabular}{ccc}
\begin{tikzpicture}[scale=0.10]
\SetUpEdge[labelstyle={draw}]
\Vertex[x=0,y=0]{00}
\Vertex[x=10,y=10]{10}
\Vertex[x=-10,y=10]{01}
\Vertex[x=0,y=20]{11}
\Edge[color=red](10)(11)
\Edge[color=red](00)(01)
\Edge[color=green](00)(10)
\Edge[color=green](01)(11)
\end{tikzpicture} 
&
\begin{tikzpicture}[scale=0.10]
\SetUpEdge[labelstyle={draw}]
\Vertex[x=0,y=0]{10}
\Vertex[x=10,y=10]{00}
\Vertex[x=-10,y=10]{11}
\Vertex[x=0,y=20]{01}
\Edge[color=red](10)(11)
\Edge[color=red](00)(01)
\Edge[color=green](00)(10)
\Edge[color=green](01)(11)
\end{tikzpicture} 
&
\begin{tikzpicture}[scale=0.10]
\SetUpEdge[labelstyle={draw}]
\Vertex[x=0,y=0]{11}
\Vertex[x=10,y=10]{01}
\Vertex[x=-10,y=10]{10}
\Vertex[x=0,y=20]{00}
\Edge[color=red](10)(11)
\Edge[color=red](00)(01)
\Edge[color=green](00)(10)
\Edge[color=green](01)(11)
\end{tikzpicture} 
\\
\begin{tikzpicture}[scale=0.10]
\SetUpEdge[labelstyle={draw}]
\Vertex[x=0,y=0]{01}
\Vertex[x=10,y=10]{11}
\Vertex[x=-10,y=10]{00}
\Vertex[x=0,y=20]{10}
\Edge[color=red](10)(11)
\Edge[color=red](00)(01)
\Edge[color=green](00)(10)
\Edge[color=green](01)(11)
\end{tikzpicture} 
&
\begin{tikzpicture}[scale=0.15]
\SetUpEdge[labelstyle={draw}]
\Vertex[x=0,y=0]{01}
\Vertex[x=0,y=10]{11}
\Vertex[x=10,y=10]{00}
\Vertex[x=10,y=0]{10}
\Edge[color=red](10)(11)
\Edge[color=red](00)(01)
\Edge[color=green](00)(10)
\Edge[color=green](01)(11)
\end{tikzpicture} 
&
\begin{tikzpicture}[scale=0.15]
\SetUpEdge[labelstyle={draw}]
\Vertex[x=0,y=0]{00}
\Vertex[x=0,y=10]{10}
\Vertex[x=10,y=10]{01}
\Vertex[x=10,y=0]{11}
\Edge[color=red](10)(11)
\Edge[color=red](00)(01)
\Edge[color=green](00)(10)
\Edge[color=green](01)(11)
\end{tikzpicture} 

\end{tabular}
\caption{The rank family of $I^2$.\label{fig:2cube rank family}}
\end{center}
\end{figure}
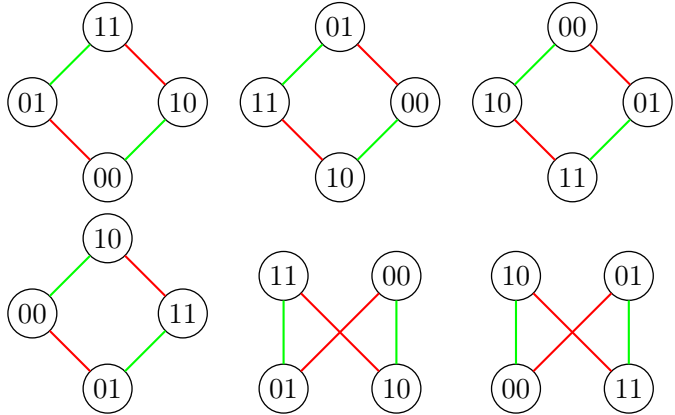

\begin{open}
\label{que:rank family}
What are the enumerative and algebraic properties of $R(A)$? 
\end{open}

After a short survey of the literature (mostly developed in \cite{d2l:graph-theoretic}), we give some original results using the language of posets and lattices in Section~\ref{sec:revisiting}. Finally, with the help of decomposition, we will computationally enumerate the possible rankings for $I^n_c$ with $n \leq 5$.



\subsection{Hanging Gardens}

The main structural theorem for rankings is the following theorem. Let $D(v,w)$ be the graph distance between $v$ and $w$.

\begin{theorem} [DFGHIL, {\cite[Theorem 4.1]{d2l:graph-theoretic}}]
\label{thm:hanging gardens}
Fix a chromotopology $A$. Let $S \subset V(A)$ and $h_S\colon S \to \ZZ$ satisfy the following properties:
\begin{enumerate}
\item $h_S$ takes only odd values on bosons and only even values on fermions, or vice-versa.
\item For every distinct $s_1$ and $s_2$ in $S$, we have $D(s_1, s_2) \geq |h_s(s_1) - h_s(s_2)|$.
\end{enumerate}
Then, there exists a unique ranking of $A$, corresponding to the rank function $h$, such that $h$ agrees with $h_S$ on $S$ and $A$'s set of sinks is exactly $S$. By symmetry, there also exists a unique ranking of $A$ whose set of sources is exactly $S$.
\end{theorem}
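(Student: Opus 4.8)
The plan is to prove both existence and uniqueness at once by writing down the only rank function that could possibly work, the ``hanging gardens'' formula
\[
h(v) = \max_{s \in S}\bigl(h_S(s) - D(v,s)\bigr),
\]
and then checking it has the advertised properties. First I would argue this formula is \emph{forced}, which handles uniqueness. Suppose some valid ranking $h$ with sink set $S$ and $h|_S = h_S$ exists. For any $s \in S$ and any vertex $v$, each edge along a path from $v$ to $s$ changes the rank by $\pm 1$, so $h_S(s) - h(v) \le D(v,s)$, giving $h(v) \ge \max_{s \in S}(h_S(s) - D(v,s))$. Conversely, since $S$ is exactly the sink set, $v$ has a rank-monotone ascending path to some sink $s_v$; this path has length $h_S(s_v) - h(v) \ge D(v,s_v)$, so $h(v) \le h_S(s_v) - D(v,s_v)$. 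The two bounds force $h(v) = \max_{s \in S}(h_S(s) - D(v,s))$.

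For existence I would \emph{define} $h$ by the formula and verify three things. First, $h$ is a legal rank function: for an edge $vw$, bipartiteness forces $|D(v,s) - D(w,s)| = 1$ for every $s$, so $|h(v) - h(w)| \le 1$, while condition (1) guarantees that $h_S(s) - D(v,s)$ has the same parity for all $s$ (depending only on the part containing $v$), so $h(v)$ and $h(w)$ have opposite parities and $|h(v) - h(w)| = 1$ exactly. Second, $h|_S = h_S$: for $s \in S$ the index $s' = s$ contributes $h_S(s)$, while condition (2) makes every other term $h_S(s') - D(s,s')$ at most $h_S(s)$. Third, a vertex $v \notin S$ is never a sink: the maximizing index $s^\ast$ has $s^\ast \ne v$, so the neighbor $w$ of $v$ one step closer to $s^\ast$ satisfies $h(w) \ge h_S(s^\ast) - D(w,s^\ast) = h(v) + 1$, placing $w$ above $v$.

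The main obstacle is the reverse containment, that every $s \in S$ is \emph{genuinely} a sink, i.e.\ has no neighbor of strictly higher rank; this is precisely where condition (2) does its real work. If a neighbor $w$ of $s$ had $h(w) = h_S(s) + 1$, then $h(w) = h_S(s') - D(w,s')$ for some $s' \ne s$ lying above $s$, and concatenating the edge $sw$ with a geodesic from $w$ to $s'$ would yield an ascending route whose length undercuts the margin that condition (2) demands between $s$ and $s'$; I expect this distance-and-parity bookkeeping to be the delicate step. Once the sink set is shown to equal $S$, the source statement follows by symmetry: applying the sink case to $-h_S$ — equivalently, replacing the formula by $h(v) = \min_{s \in S}(h_S(s) + D(v,s))$ — produces the unique ranking whose source set is exactly $S$.
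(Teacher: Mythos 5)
Your formula $h(v)=\max_{s\in S}\bigl(h_S(s)-D(v,s)\bigr)$ is the right construction (the paper itself gives no proof of this theorem, citing DFGHIL and offering only the ``pin the sinks and let everything hang'' intuition that your formula formalizes), and your uniqueness argument, the legality of $h$, the verification $h|_S=h_S$, and the claim that no vertex outside $S$ is a sink are all correct. But the step you explicitly deferred --- that every $s\in S$ genuinely is a sink --- is a real gap, and it is not merely delicate: with condition (2) as stated it cannot be closed, because the statement is then false. Carry out your own bookkeeping: if a neighbor $w$ of $s$ has $h(w)=h_S(s)+1$, then $h(w)=h_S(s')-D(w,s')$ for some $s'\neq s$, and the triangle inequality $D(s,s')\le 1+D(w,s')$ gives $h_S(s')-h_S(s)=1+D(w,s')\ge D(s,s')$. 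Against the non-strict hypothesis $D(s,s')\ge |h_S(s')-h_S(s)|$ this forces equality rather than a contradiction, and equality really occurs: in $I_c^2$ take $S=\{00,11\}$ with $h_S(00)=0$ and $h_S(11)=2$, so that both hypotheses hold with $D(00,11)=2=|h_S(11)-h_S(00)|$. In any ranking agreeing with $h_S$, a geodesic from $00$ to $11$ has two steps whose rank increments sum to $2$, so both are $+1$ and $00$ is covered by its neighbor; hence no ranking has sink set exactly $S$, and your formula indeed outputs the ranking with sink set $\{11\}\subsetneq S$.

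The repair is that condition (2) must be read with strict inequality, $D(s_1,s_2)>|h_S(s_1)-h_S(s_2)|$ for distinct $s_1,s_2\in S$; note that by bipartiteness and the parity condition (1), $D(s_1,s_2)$ and $|h_S(s_1)-h_S(s_2)|$ are always congruent mod $2$, so strictness is equivalent to demanding a margin of at least $2$ (the $\geq$ in the paper's statement appears to be a slip). With strictness, your deferred computation closes in one line: the displayed chain gives $h_S(s')-h_S(s)\ge D(s,s')$, contradicting $D(s,s')>h_S(s')-h_S(s)$, so every $s\in S$ is a sink and existence is complete; every other step of your write-up survives unchanged, including the symmetry reduction for sources via $h(v)=\min_{s\in S}\bigl(h_S(s)+D(v,s)\bigr)$. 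You should make two small points explicit when finalizing: the ascending path used in uniqueness terminates at a sink because the poset is finite, and an assignment with $|h(v)-h(w)|=1$ on every edge really does realize $A$ as the Hasse diagram of a ranked poset (each edge is automatically a cover relation since ranks are integers).
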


In other words, any ranking of $A$ is determined by a set of sinks (or sources) and the relative ranks of those sinks/sources. We can think of such a choice as the following: pick some nodes as sinks\footnote{If we chose sources instead of sinks, we can imagine the other nodes ``floating'' up; the name ``Floating Gardens'' also evokes a pleasant image.} and ``pin'' them at acceptable relative ranks, and let the other nodes naturally ``hang'' down. Thus, Theorem~\ref{thm:hanging gardens} is also called the ``Hanging Gardens'' Theorem. Figure~\ref{fig:3cube hanging} shows an example.

\begin{figure}[htb]
\begin{center}

\begin{tabular}{cc}
\begin{tikzpicture}[scale=0.15]
\SetUpEdge[labelstyle={draw}]
\SetVertexNoLabel
\Vertex[x=0,y=0]{111}
\Vertex[x=0,y=10]{101}
\Vertex[x=0,y=20]{010}
\Vertex[x=0,y=30]{000}
\Vertex[x=-10,y=10]{110}
\Vertex[x=10,y=10]{011}
\SetVertexNormal[LineWidth=4pt]
\Vertex[x=-10,y=20]{100}
\Vertex[x=10,y=20]{001}
\Edge[color=red](100)(101)
\Edge[color=red](000)(001)
\Edge[color=red](010)(011)
\Edge[color=red](110)(111)
\Edge[color=green](000)(100)
\Edge[color=green](001)(101)
\Edge[color=green](010)(110)
\Edge[color=green](011)(111)
\Edge[color=blue](000)(010)
\Edge[color=blue](001)(011)
\Edge[color=blue](100)(110)
\Edge[color=blue](101)(111)
\end{tikzpicture}
&
\begin{tikzpicture}[scale=0.15]
\SetVertexNoLabel
\SetUpEdge[labelstyle={draw}]
\Vertex[x=0,y=0]{111}
\Vertex[x=0,y=10]{101}
\Vertex[x=10,y=0]{010}
\Vertex[x=20,y=10]{000}
\Vertex[x=-10,y=10]{110}
\Vertex[x=10,y=10]{011}
\SetVertexNormal[LineWidth=4pt]
\Vertex[x=-10,y=20]{100}
\Vertex[x=10,y=20]{001}
\Edge[color=red](100)(101)
\Edge[color=red](000)(001)
\Edge[color=red](010)(011)
\Edge[color=red](110)(111)
\Edge[color=green](000)(100)
\Edge[color=green](001)(101)
\Edge[color=green](010)(110)
\Edge[color=green](011)(111)
\Edge[color=blue](000)(010)
\Edge[color=blue](001)(011)
\Edge[color=blue](100)(110)
\Edge[color=blue](101)(111)
\end{tikzpicture}
\end{tabular}

\caption{Left: $I^3$. Right: Hanging Gardens on $I^3$ applied to the two outlined vertices. 
\label{fig:3cube hanging}}
\end{center}
\end{figure}
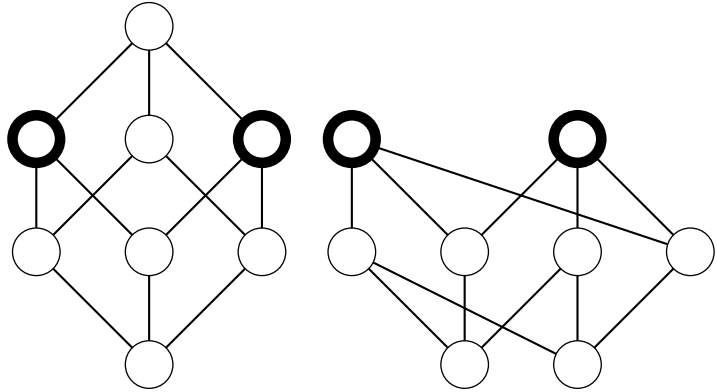

In particular, note that we can pick the set of sinks to contain only a single element, which defines a unique ranking. Thus, for any vertex $v$ of a chromotopology $A$, by Theorem~\ref{thm:hanging gardens} we can get a ranking $A^v$ defined by its only having one sink $v$ (visually, $A^v$ ``hangs'' from its only sink $v$). We call $A^v$ the \emph{$v$-hooked ranking} and all such rankings \emph{one-hooked}. By symmetry, we can also define the \emph{$v$-anchored} ranking $A_v$, which ``floats'' from its only source $v$. For example, Figure~\ref{fig:3cube} is both the $111$-hooked ranking $A^{111}$ and the $000$-anchored ranking $A_{000}$ of $I^3$.

Now, we introduce two operators on $R(A)$.
Given a ranking $B$ in $R(A)$ (with rank function $h$) and a sink $s$, we define $D_s$, the \emph{vertex lowering on $s$}, to be the operation that sends $B$ to the ranking with rank function $h'$ where everything is unchanged except $h'(s) = h(s) - 2$ (visually, we have ``flipped'' $s$ down two ranks and its edges with it). Observe that since $s$ was a sink, this operation retains the fact that all covering relations have rank difference $1$ and thus we still get a ranking. We define $U_s$, the \emph{vertex raising on $s$}, to be the analogous operation for $s$ a source. We call both of these operators \emph{vertex flipping} operators.

\begin{prop} [{DFGHIL, \cite[Theorem 5.1, Corollary 5.2]{d2l:graph-theoretic}}] 
\label{prop:flipping around}
Let $A$ be a ranking. For any vertex $v$:
\begin{enumerate} 
\item there is a sequence of vertex lowerings that take $A$ to $A^v$;
\item there is a sequence of vertex raisings that take $A^v$ to $A$;
\item there is a sequence of vertex raisings that take $A$ to $A_v$;
\item there is a sequence of vertex lowerings that take $A_v$ to $A$.
\end{enumerate}
Furthermore, in all of these sequences we do not need to ever raise or lower $v$.
\end{prop}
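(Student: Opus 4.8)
The plan is to prove part (1) directly with a monovariant (potential) argument, deduce (2) by reversing the constructed sequence, and obtain (3) and (4) from the sink/source symmetry already built into Theorem~\ref{thm:hanging gardens}.

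For part (1), I would repeatedly apply vertex lowerings to sinks other than $v$ and argue that this process terminates exactly at $A^v$. The crucial bookkeeping observation is that a lowering $D_s$ changes the rank of only the single vertex $s$, and since we never act on $v$, the value $h(v)$ stays \emph{literally} constant (not merely constant up to shift) throughout the whole process. I can therefore use $\Phi(B) = \sum_{w \in V(A)} h_B(w)$ as a well-defined integer monovariant, where $h_B$ is the rank function of the ranking $B$. Each $D_s$ decreases $\Phi$ by exactly $2$. To see that $\Phi$ is bounded below, note that along any path from $v$ to $w$ in the Hasse diagram the rank changes by $\pm 1$ at each edge (edges are cover relations), so $h_B(w) \geq h(v) - D(v,w)$; summing over $w$ gives $\Phi(B) \geq \sum_{w} (h(v) - D(v,w))$, a constant depending only on $A$ and $v$. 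Hence the process must halt.

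The process halts precisely when no sink other than $v$ remains. Since a finite poset always has at least one maximal element, this forces $v$ to be the unique sink of the terminal ranking. That terminal ranking then has sink set $\{v\}$ and the fixed value $h(v)$, so by the uniqueness clause of Theorem~\ref{thm:hanging gardens} applied with $S = \{v\}$ and $h_S(v) = h(v)$ it must equal $A^v$. Because we only ever lowered sinks distinct from $v$, we never lowered $v$, which establishes (1) together with the ``furthermore'' clause. For (2), I would simply reverse the sequence from (1): the key point is that $D_s$ is inverted by $U_s$, since a sink $s$ with $h_B(s) = h_B(w) + 1$ for every neighbor $w$ lands, after lowering, at $h_B(s) - 2 = h_B(w) - 1$ below all its neighbors and thus becomes a \emph{source}, so $U_s$ returns the ranking to $B$. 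Reading $A = B_0 \to \cdots \to B_m = A^v$ backwards converts each $D_{s_i}$ into $U_{s_i}$ and carries $A^v$ back to $A$ without ever raising $v$. Finally, (3) and (4) are the mirror statements obtained by interchanging sinks with sources and lowerings with raisings; the identical argument, now invoking the ``by symmetry'' half of Theorem~\ref{thm:hanging gardens} that produces $A_v$ from a prescribed source set, gives (3), and reversing that sequence gives (4).

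The main obstacle is the termination-and-identification step in part (1): one must guarantee both that the lowering process cannot run forever and that it cannot stall in some configuration other than $A^v$. The distance-based lower bound on $\Phi$ disposes of the former, while the existence of a maximal element in any finite poset forces the halting configuration to have $v$ as its lone sink; uniqueness in Hanging Gardens then pins the limit down to $A^v$. Everything else (reversibility of the operators and the sink/source duality) is routine once this core claim is in hand.
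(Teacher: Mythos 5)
Your proposal is correct and follows essentially the same route as the paper, whose proof is only a sketch: ``pin'' $v$ and greedily lower all other sinks until no move remains, then obtain the remaining parts by reversing the sequence and by sink/source symmetry. Your write-up additionally supplies the details the paper leaves implicit---the monovariant $\Phi(B)=\sum_w h_B(w)$ with the distance bound $h_B(w)\geq h(v)-D(v,w)$ for termination, the observation that a lowered sink becomes a source so $U_s$ inverts $D_s$, and the appeal to the uniqueness clause of Theorem~\ref{thm:hanging gardens} to identify the terminal ranking with $A^v$---all of which are accurate.
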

\begin{proof}
The main idea of the proof is again very visually intuitive:  ``pin'' $v$ to a fixed rank and let everything else fall down by gravity (slightly more formal: greedily make arbitrary vertex lowerings, except on $v$, until it is no longer possible). The other claims follow by reversing the steps and/or applying symmetry between sinks and sources.
\end{proof}

\begin{cor}
\label{cor:any to any}
Any two rankings with the same chromotopology $A$ can be obtained from each other via a sequence of vertex-raising or vertex-lowering operations.
\end{cor}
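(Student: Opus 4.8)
The plan is to route any two rankings through a common intermediary, namely a one-hooked ranking, and then invoke Proposition~\ref{prop:flipping around}. Fix an arbitrary vertex $v \in V(A)$ and let $A^v$ denote the $v$-hooked ranking. The key observation is that $A^v$ depends only on the chromotopology $A$ and not on any particular starting ranking: by the uniqueness clause of Theorem~\ref{thm:hanging gardens}, $A^v$ is the unique ranking whose only sink is $v$. Hence it serves as a common ``hub'' reachable from every ranking of $A$.

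Now let $B_1$ and $B_2$ be two rankings sharing the chromotopology $A$. Applying part (1) of Proposition~\ref{prop:flipping around} to $B_1$, there is a sequence of vertex-lowering operations carrying $B_1$ to $A^v$. Applying part (2) of Proposition~\ref{prop:flipping around} to $B_2$, there is a sequence of vertex-raising operations carrying $A^v$ to $B_2$. Concatenating these two sequences yields a sequence of vertex-flipping operations that transforms $B_1$ into $B_2$, which is exactly the claim.

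There is essentially no hard step: the substantive work is entirely contained in Proposition~\ref{prop:flipping around}, and the only point requiring care is the well-definedness of the hub $A^v$ independently of the starting ranking, which is immediate from the uniqueness asserted in the Hanging Gardens theorem. If one wished, one could equally route through a $v$-anchored ranking $A_v$ using parts (3) and (4), or indeed through any single fixed ranking of $A$; the choice of the one-hooked ranking is merely the most convenient.
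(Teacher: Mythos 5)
Your proof is correct and is precisely the argument the paper leaves implicit: the corollary follows from Proposition~\ref{prop:flipping around} by routing any two rankings through the common hub $A^v$, lowering $B_1$ to $A^v$ and then raising $A^v$ to $B_2$. Your explicit remark that $A^v$ is well-defined independently of the starting ranking (via the uniqueness clause of Theorem~\ref{thm:hanging gardens}) is a worthwhile clarification of exactly the point the paper takes for granted.
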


Corollary~\ref{cor:any to any} shows that there exists a connected graph $G$ with $V(G) = R(A)$ and $E(G)$ corresponding to vertex flips. In the literature (say \cite{d2l:graph-theoretic}), $R(A)$, equipped with this graph structure, is called the \emph{main sequence}. 

\subsection{The Rank Family Poset}

Consider a chromotopology $A$. We know from the discussion in the previous section that its rank family has the structure of a graph. In this section, we show that it actually has much more structure. Our main goal will be to explicitly prove some observations made in \cite[Section 8]{d2l:graph-theoretic}.

\begin{figure}[htb]
\begin{center}
\includegraphics[scale=0.75]{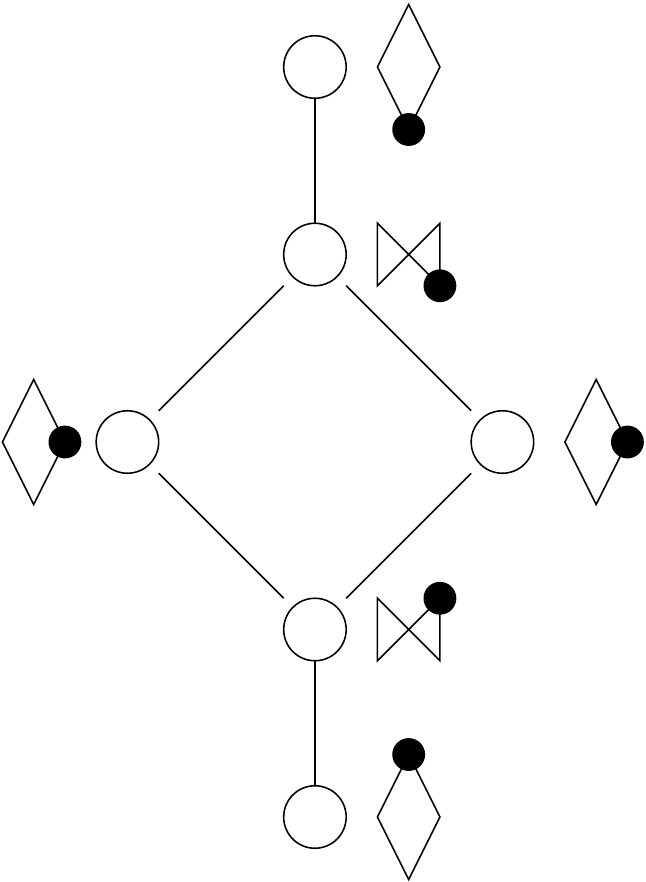}
\hspace{0.5in}
\parbox{5in}{\caption{The rank family poset for $P_v(I^2)$, where next to each node is a corresponding ranking. The rankings are presented as miniature posets, with the black dots corresponding to $v$, the vertex we are not allowed to raise.}
\label{fig:2cube rank poset}}
\end{center}
\end{figure}

\begin{theorem}
\label{thm:generating poset}
For a chromotopology $A$ and any vertex $v$ of $A$, there exists a poset $P_v(A)$ such that:
\begin{enumerate}
\item $R(A)$ is the vertex set of $P_v(A)$;
\item $P_v(A)$ is a symmetric ranked poset, with exactly one element in the top rank and exactly one element in the bottom rank;
\item each covering relation in $P_v(A)$ corresponds to vertex-flipping on some vertex $w \neq v$.
\end{enumerate}
\end{theorem}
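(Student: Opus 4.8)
The plan is to realize $P_v(A)$ as a reachability order on $R(A)$ under the vertex flips that avoid $v$, controlled by a single integer potential. First I would fix a canonical rank function for each ranking: a ranking determines $h$ only up to an additive constant, and since $v$ is never flipped in the operations of Proposition~\ref{prop:flipping around}, for each $B \in R(A)$ I normalize $h_B$ by $h_B(v) = 0$ and set
\[
N(B) = \sum_{w \in V(A)} h_B(w).
\]
By the definitions of the flipping operators, raising a source $w \neq v$ alters only $h_B(w)$, by $+2$, and lowering a sink $w \neq v$ alters it by $-2$; thus every legal flip changes $N$ by exactly $\pm 2$, and all values of $N$ on $R(A)$ lie in a single residue class modulo $2$.

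Next I would define the order on $P_v(A)$ by declaring $B \le B'$ whenever $B'$ is obtained from $B$ by a sequence of raisings on vertices $w \neq v$ (equivalently, $B$ from $B'$ by such lowerings). Reflexivity and transitivity are immediate, and antisymmetry holds because $N$ strictly increases along every nonempty such chain; this establishes (1). Declaring $\on{rank}(B) = (N(B) - N_{\min})/2$, where $N_{\min}$ is the least value of $N$, gives an integer-valued function that rises by exactly $1$ under each single flip. The parity observation shows no ranking can lie strictly between $B$ and $U_w(B)$, so the single flips are precisely the covering relations and $\on{rank}$ is a genuine rank function; this proves $P_v(A)$ is ranked and yields (3).

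For the extremes and the symmetry I would argue as follows. By Proposition~\ref{prop:flipping around}, from any $B$ there is a chain of lowerings avoiding $v$ down to $A^v$, so $A^v \le B$ for every $B$; dually $B \le A_v$. Hence $A^v$ and $A_v$ are the global minimum and maximum, and since strict comparability forces a strict change in $N$, each sits alone in its rank (bottom, resp.\ top), giving half of (2). For self-duality, consider the involution $B \mapsto \bar B$ sending a ranking to its upside-down ranking, with rank function $-h_B$ (again normalized, since $h_B(v)=0$). A direct check on rank functions gives $\overline{U_w(B)} = D_w(\bar B)$ --- raising $w$ then flipping equals flipping then lowering $w$, using that $w$ is a source of $B$ exactly when it is a sink of $\bar B$. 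Thus the bar map carries each cover $B \lessdot U_w(B)$ to the cover $\overline{U_w(B)} \lessdot \bar B$, so it is an order-reversing automorphism and $P_v(A)$ is self-dual; in particular $\overline{A^v} = A_v$. This completes (2).

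The step demanding the most care is the construction of the order itself: checking that reachability by flips is genuinely a ranked partial order whose covers are exactly the single flips. All of this difficulty is absorbed by the potential $N$ --- once one verifies that each legal flip moves $N$ by $\pm 2$ and that all values of $N$ share one parity, antisymmetry, gradedness, and the identification of the covers follow at once, and the only further inputs are Proposition~\ref{prop:flipping around} for the two extremes and the elementary bookkeeping of the flip involution.
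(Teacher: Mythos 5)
Your proof is correct, and it takes a genuinely different route from the paper's. The paper builds $P_v(A)$ constructively, level by level: it places $A^v$ alone at the bottom, generates each successive rank by applying all raisings $U_w$ ($w \neq v$) to the previous one, invokes Theorem~\ref{thm:hanging gardens} to see that the process terminates only at the single ranking $A_v$ (the unique ranking with no source besides $v$), and proves rank-symmetry by an induction using the upside-down map $\phi$. You instead define the order abstractly as reachability under raisings avoiding $v$ and let the potential $N(B)=\sum_w h_B(w)$ (with $h_B(v)=0$) do all the structural work: since every legal flip moves $N$ by exactly $\pm 2$, you get antisymmetry, the identification of covers with single flips, integrality of the rank function $(N(B)-N_{\min})/2$, and uniqueness of the extremal elements in their ranks, all at once; Proposition~\ref{prop:flipping around} is then the only input needed to locate $A^v$ and $A_v$ as global minimum and maximum, and the bar involution $h_B \mapsto -h_B$ (which is the paper's $\phi$, normalized at $v$) gives self-duality via the clean identity $\overline{U_w(B)} = D_w(\bar B)$ rather than by induction on ranks. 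Your approach buys a shorter, more conceptual argument that avoids Hanging Gardens entirely and makes the gradedness and cover structure transparent; the paper's explicit generation of $P_v(A)$ by raising operators buys a construction that feeds directly into Corollary~\ref{cor:verma}, where the image of $A^v$ under the $U(A)/U_v$-action is read off from the construction itself (though your reachability order supports that corollary just as well). One small remark: the global parity claim you state for $N$ is not strictly needed for the cover identification --- since your order is generated by raisings alone, any intermediate ranking between $B$ and $U_w(B)$ would force $N(U_w(B)) \geq N(B)+4$ --- but it is needed for integrality of your rank function, and it does follow, as you say, from connectivity of $R(A)$ under flips avoiding $v$.
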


\begin{proof}
Construct $P_v(A)$, as a ranked poset, in the following way: on the bottom rank $0$ put $A^v$ as the unique element. Once we finish constructing rank $i$, from any element $B$ on rank $i$, perform a vertex-raising on all sources (except $v$) to obtain a set of rankings $S(B)$. Put the union of all $S(B)$ (as $B$ ranges through the elements on rank $i$) on rank $i+1$, adding covering relations $C > B$ if we obtained $C$ from $B$ via a vertex-raising. It is obvious from this construction that the covering relations in $P_v(A)$ are exactly the vertex flippings on vertices that are not $v$. 

We stop this process if all the elements of rank $i$ have no sources besides $v$ to raise. By Theorem~\ref{thm:hanging gardens}, this is only possible for a single ranking, namely $A_v$. Thus, there is exactly one element in the top rank of $P_v(A)$ as well. By Proposition~\ref{prop:flipping around}, we can get from $A^v$ to any element of $R(A)$ by vertex-raising only, without ever raising $v$. This means that every element of $R(A)$ has appeared exactly once in our construction. 

Now, consider the map $\phi$ that takes a ranking $B$ of rank $k$ to the ranking $B'$, in which each any $v$ with rank $i$ in $B$ has rank $k-i$ in $B'$. Since $\phi$ takes $A_v$ to $A^v$, and vice versa, the top and bottom ranks are symmetric. However, $\phi$ also switches covering relations of vertex-raisings to vertex-lowerings. Thus, we can show that for every $i$ the $i$-th ranks and the $(k-i)$-th ranks are symmetric by induction on $i$. This makes $P_v(A)$ into a symmetric poset as desired.
\end{proof}

Note that the vertex-flips $D_s$ and $U_s$ can be extended linearly to act on formal sums of $\RR[R(A)]$, if we let them send rankings for which the corresponding flip is not allowed to $0$. Define $U(A)$ to be the algebra generated by all $U_s$ with $s \in A$. 

\begin{cor}
\label{cor:verma} 
The image of $A^v$ under the action of the quotient $U(A)/U_v$ is $\RR[R(A)].$
\end{cor}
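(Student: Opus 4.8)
The plan is to show that the cyclic module generated by $A^v$ under the action of $U(A)/U_v$ already contains every basis vector of $\RR[R(A)]$, which forces the image to be everything. Since the operators $U_s$ act linearly and preserve $\RR[R(A)]$, the image of $A^v$ is by definition a linear subspace of $\RR[R(A)]$ (it is the submodule generated by $A^v$). Hence it suffices to produce, for each ranking $B \in R(A)$, an element $u \in U(A)/U_v$ with $u \cdot A^v = B$.

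First I would invoke Proposition~\ref{prop:flipping around}(2): for any ranking $B$ there is a sequence of vertex-raisings carrying $A^v$ to $B$, and, crucially, this sequence never raises $v$. Writing the successive sources raised as $s_1, s_2, \ldots, s_k$ (all different from $v$), the sequence corresponds to the monomial $u_B = U_{s_k} \cdots U_{s_1}$, which lies in the subalgebra generated by the $U_s$ with $s \neq v$, i.e. in $U(A)/U_v$.

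Next I would verify that $u_B \cdot A^v = B$ as an honest equality in $\RR[R(A)]$, with no cancellation to $0$. This is the one place requiring care: each $U_{s_i}$ kills any ranking of which $s_i$ is not a source, so I must check that at every intermediate stage the vertex being raised is genuinely a source. But this is exactly the content of the sequence furnished by Proposition~\ref{prop:flipping around}: each raising in it is a legal vertex-raising of the current ranking, so applying the operators one at a time reproduces the sequence step by step and terminates at $B \neq 0$. The poset $P_v(A)$ of Theorem~\ref{thm:generating poset}, whose covering relations are precisely these $v$-avoiding vertex-raisings and whose unique bottom element is $A^v$, guarantees such a saturated chain of raisings from $A^v$ up to any prescribed $B$.

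Finally, since $B$ was arbitrary, every basis element of $\RR[R(A)]$ lies in the image of $A^v$ under $U(A)/U_v$; hence the image spans, and therefore equals, all of $\RR[R(A)]$. The only (mild) obstacle is the bookkeeping of non-vanishing across intermediate applications, which the existence of the legal raising sequence resolves automatically.
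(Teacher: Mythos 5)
Your proposal is correct and follows essentially the same route as the paper: the paper's proof cites the construction of $P_v(A)$ in Theorem~\ref{thm:generating poset} (which itself rests on Proposition~\ref{prop:flipping around}), and your argument simply unwinds that construction, making explicit the bookkeeping that each intermediate raising is legal so no operator kills the vector. Your care about non-vanishing and your identification of the $v$-avoiding subalgebra with $U(A)/U_v$ match the paper's (equally informal) treatment, so nothing is missing.
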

\begin{proof}
This is immediate from the construction in Theorem~\ref{thm:generating poset}, where we started with a single ranking $A^v$. Taking the image under vertex raisings is exactly taking the image of the $U(A)$-action on $A^v$. Forbidding the vertex raising at $v$ is exactly restricting this action to the quotient $U(A)/U_v$. 
\end{proof}

\begin{figure}[htb]
\begin{center}
\includegraphics[scale=0.75]{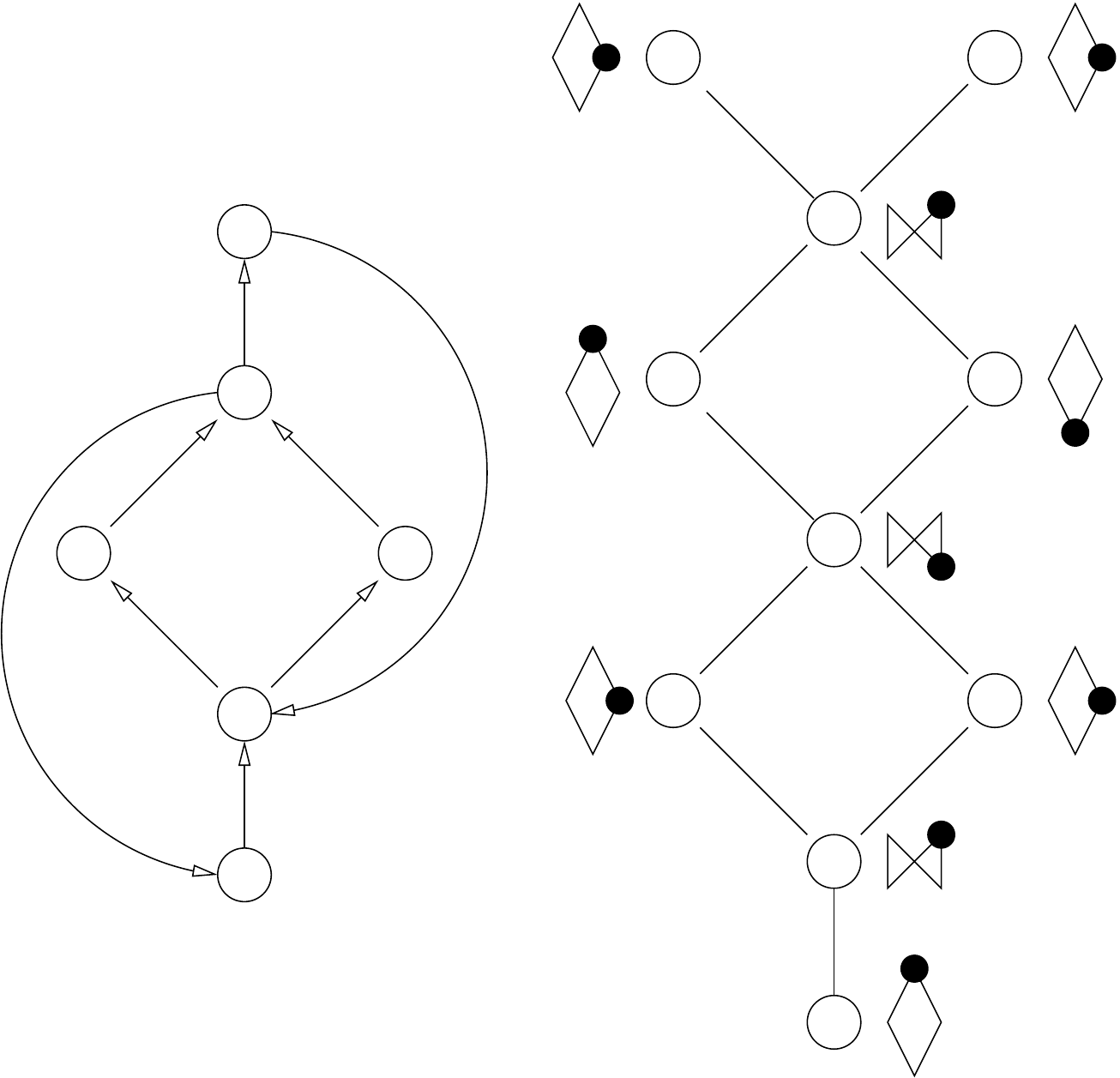}
\hspace{0.5in}
\parbox{5in}{\caption{Left: if we were to allow vertex raising at $v$, we no longer get a poset since we introduce cycles. Right: we can also think of this situation by an infinite poset leading upwards.}
\label{fig:2cube verma}}
\end{center}
\end{figure}

The authors of \cite{d2l:graph-theoretic} noted that the rank family is reminiscent of a Verma module. Corollary~\ref{cor:verma} is an algebraic realization of this observation. The ranking $A^v$ takes the role of the \emph{lowest-weight vector}. If we allowed vertex raisings at $v$, we would have obtained an infinite repeating family of rankings, as in Figure~\ref{fig:2cube verma}. When we strip the redundant rankings by forbidding $U_v$, we leave ourselves with a finite $R(A)$. 

\subsection{Revisiting the Hanging Gardens}
\label{sec:revisiting}

In this section, we will put even more structure on $P_v(A)$ with the language of \emph{lattices}. A quick overview of the concepts we will need are in Appendix~\ref{app:lattices}.

We first construct an auxiliary poset $E_v(A)$, which we call the \emph{$v$-elevation poset} of $A$:
\begin{itemize}
\item let the vertices of $E_v(A)$ be ordered pairs $(w, h)$, where $w \in A$, $w \neq v$, and $h \in \{1, 2, \ldots, D(w,v)\}$.
\item whenever $D(w_1, w_2) = 1$ and $D(w_1, v) + 1 = D(w_2, v)$, have $(w_1, h)$ cover $(w_2, h)$ and $(w_2, h+1)$ cover $(w_1, h)$.
\end{itemize}

\begin{figure}[htb]
\begin{center}

\begin{tikzpicture}[scale=0.15]
\SetUpEdge[labelstyle={draw}]
\SetVertexNoLabel
\Vertex[x=0,y=0]{A}
\Vertex[x=10,y=0]{B}
\Vertex[x=20,y=0]{C}
\Vertex[x=30,y=10]{D}
\Vertex[x=40,y=10]{E}
\Vertex[x=50,y=10]{F}
\Vertex[x=30,y=-10]{G}
\Vertex[x=40,y=-10]{H}
\Vertex[x=50,y=-10]{I}
\Vertex[x=60,y=20]{J}
\Vertex[x=60,y=0]{K}
\Vertex[x=60,y=-20]{L}


\foreach \w in {D,E,F,G,H,I}{%
  \Edge(K)(\w)};%
\foreach \w in {D,E,F}{%
  \Edge(J)(\w)};%
\foreach \w in {G,H,I}{%
  \Edge(L)(\w)};%
\Edges(A,D,B,F,C,E,A,G,B,I,C,H,A)

\end{tikzpicture}
\caption{The elevation poset $E_{000}(I_c^3)$. Nodes $(w,h)$ on each vertical line have the same $w$-value. In order (left to right), the $w$-values are: $100, 010, 001, 110, 101, 011, 111$.
\label{fig:elevation posets}}
\end{center}
\end{figure}
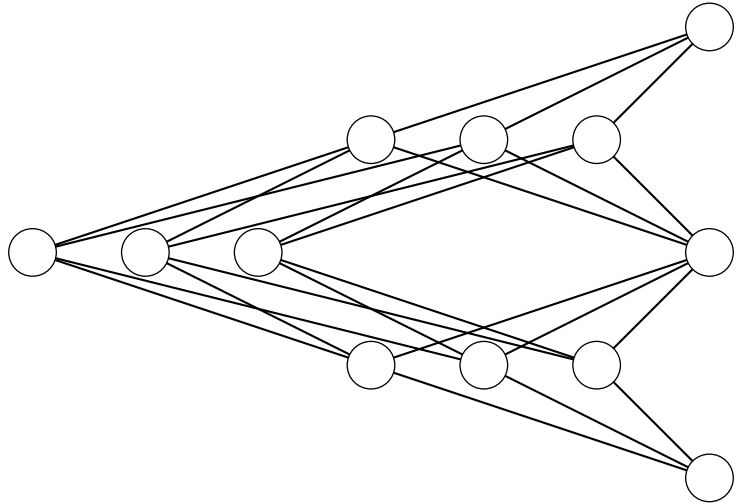

Our main result of this section is the following theorem.

\begin{theorem}
\label{thm:elevation poset}
The $v$-elevation poset and the $v$-rank family poset are related by
\[
P_v(A) = J(E_v(A)).
\]
Thus, the rank family poset $P_v(A)$ is a finite distributive lattice.
\end{theorem}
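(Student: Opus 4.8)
The plan is to establish the isomorphism $P_v(A) = J(E_v(A))$ by exhibiting an explicit order-preserving bijection between the rankings of $A$ (the elements of $P_v(A)$) and the order ideals (down-sets) of the elevation poset $E_v(A)$. The central idea is that a ranking $B \in R(A)$ is completely determined, via Hanging Gardens (Theorem~\ref{thm:hanging gardens}), by how far each vertex $w \neq v$ has been ``raised'' above its lowest possible position. In the base ranking $A^v$, every vertex $w$ sits at rank $-D(w,v)$ relative to the unique sink $v$; in an arbitrary ranking $B$, the vertex $w$ sits some even number of steps higher. I would define, for each ranking $B$, the set
\[
I(B) = \{(w,h) \in E_v(A) : w \text{ has been raised at least } h \text{ units above its position in } A^v\},
\]
and argue that $B \mapsto I(B)$ is the desired bijection onto $J(E_v(A))$.

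The key steps, in order, are as follows. First I would check that $I(B)$ is always an order ideal of $E_v(A)$: the covering relations in $E_v(A)$ encode exactly the constraint that a vertex $w_1$ nearer to $v$ cannot be raised to a height incompatible with that of an adjacent $w_2$ farther from $v$, which is precisely condition (2) of Theorem~\ref{thm:hanging gardens} (the distance/rank-difference inequality) rephrased incrementally. Second, I would show the map is a bijection: injectivity is clear since the heights determine the rank function, and surjectivity follows because any order ideal $I$ assigns to each $w$ a consistent height $\max\{h : (w,h) \in I\}$ (with $0$ if no such pair exists) that satisfies the Hanging Gardens compatibility conditions, hence yields a legitimate ranking by the uniqueness clause of Theorem~\ref{thm:hanging gardens}. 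Third, I would verify that covering relations match: adding a single minimal element to an ideal $I$ corresponds to a single vertex-raising $U_w$ on a source $w \neq v$, which is exactly the covering relation used to build $P_v(A)$ in Theorem~\ref{thm:generating poset}. This last point is what pins down that the two posets agree as posets and not merely as sets.

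The main obstacle will be the careful bookkeeping in the second and third steps, specifically verifying that the poset $E_v(A)$ was defined with exactly the right covering relations so that its order ideals biject with \emph{valid} rankings rather than overcounting or undercounting. The subtlety is that a source of $B$ that one is allowed to raise corresponds precisely to a minimal element of $E_v(A)$ not yet in the ideal $I(B)$; I must confirm that the two defining cover relations of $E_v(A)$ — namely $(w_1,h)$ covering $(w_2,h)$ and $(w_2,h+1)$ covering $(w_1,h)$ whenever $D(w_1,w_2)=1$ and $D(w_1,v)+1=D(w_2,v)$ — together force an ideal's heights to obey the triangle-type inequality of Theorem~\ref{thm:hanging gardens} on \emph{every} pair, not just adjacent pairs, which should follow by chaining the inequalities along a geodesic. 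Once the bijection and the cover-relation correspondence are in place, the final sentence of the theorem is immediate: $J(E_v(A))$ is a distributive lattice for any finite poset $E_v(A)$ by the fundamental theorem of finite distributive lattices (Birkhoff), so $P_v(A)$ inherits that structure.
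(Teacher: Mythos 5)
Your proposal is correct and follows essentially the same route as the paper: the paper's proof uses exactly your bijection (an ideal records, for each $w \neq v$, all pairs $(w,h')$ with $h'$ up to the number of times $w$ has been elevated above its position in $A^v$, with rank $2h - D(w,v)$), verifies it by checking that the order-ideal condition on $E_v(A)$ is equivalent to the local condition $|h(w_1)-h(w_2)|=1$ on adjacent vertices, and then invokes Theorem~\ref{thm:ftfdl}. Your third step, explicitly matching cover relations of $J(E_v(A))$ with single vertex-raisings in $P_v(A)$, is a point the paper leaves implicit, so it is a welcome tightening rather than a departure.
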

\begin{proof}
We show that there is a bijection between order ideals of $E_v(A)$ and elements of $P_v(A)$. The second claim in the theorem follows immediately from Theorem~\ref{thm:ftfdl}.


Each ideal $I$ of $E_v(A)$ gives exactly one ranking in $P_v(A)$, as follows: for every vertex $w \in A$, take the maximum $h \in \ZZ$ such that $(w, h) \in I$, taking $h = 0$ if no $(w,h)$ appears in $I$. Now assign to $w$ the rank $2h-D(w,v)$. In other words, $h$ indexes the elevation of $w$ by counting the number of total times we flip $w$ up from the initial state of the $v$-hooked ranking (which corresponds to the empty ideal), justifying the name \emph{elevation poset}. It remains to check that this map is a bijection. 

Take a ranking $A'$ in $P_v(A)$. For any $w \in A'$, we have $h(w) = 2h - D(w, v)$ for some $0 \leq h \leq D(w,v)$. Define $I \subset V(E_v(A))$ to contain all $(w,h')$, possibly empty, with $h' \leq h$. The property of $A'$ being a ranking is equivalent to the condition that for every pair of neighbors $w_1$ and $w_2$ with $D(v, w_2) = D(v, w_1)+1$, we have $|h(w_1) - h(w_2)| = 1$. However, this in turn is equivalent to the condition that the maximal $h_1$ and $h_2$ such that $(w_1, h_1)$ and $(w_2, h_2)$ appear (as before, define one of them to be $0$ if no corresponding vertices exist in $I$) in $I$ satisfy either $h_1 = h_2$ or $h_1 = h_2+1$, which is exactly the requirement for $I$ to be an order ideal. Thus, our bijection is complete. 
\end{proof}

The proof of Theorem~\ref{thm:elevation poset} gives another interpretation of Theorem~\ref{thm:hanging gardens}. Consider the order ideals of $E_v(A)$. Each such order ideal corresponds to an antichain of maximal elements, which is some collection of $(w_i, h(w_i))$. It can be easily checked that in the corresponding element of $P_v(A)$, the $w_i$ are exactly the sinks, placed at rank $2h_i - D(w_i, v)$. 

Even though Theorem~\ref{thm:elevation poset} gives us more structure on the rank family, it is very difficult in general to count the order ideals of an arbitrary poset. The typical cautionary tale is the case of the (extremely well-understood) Boolean algebra $B_n$, where the problem of counting the order ideals, known as Dedekind's Problem, has resisted a closed-form solution to this day, with answers computed up to only $n = 8$ (see \cite{wiedemann:dedekind}). 

\subsection{Ranking the Cubical Chromotopology}
\label{sec:counting rankings}

Counting the cardinality of $R(I_c^n)$ for general $n$ seems difficult. Instead, we'll attempt an algorithmic attack with the help of decomposition.
 
For any $A \in R(I_c^n)$, recall from Section~\ref{sec:decomposition} that the color $n$ must decompose $A$ uniquely into $A_0 \nearrow_n A_1$ or $A_0 \searrow_n A_1$, where each of $A_0$ and $A_1$ is a ranking in $R(I_c^{n-1})$. Thus, we can iterate over potential pairs of rankings $(A_0, A_1)$ and see if each of them could have come from some $A$. It suffices to check that each pair of vertices $\inc(c, 0 \to n)$ and $\inc(c, 1 \to n)$, where $c \in \ZZ_2^{n-1}$, has rank functions differing by exactly $1$. However, this requires $2^{n-1}$ comparisons for each pair of ranking in $R(I^{n-1})$. The following lemma cuts down the number of comparisons.

\begin{lemma}
\label{lem:cutting hanging gardens}
For an $(n,k)$-ranking $A$ and $(n-1, k)$-rankings $A_0$ and $A_1$, we have $A = A_0 \nearrow_n A_1$ if and only if the colors and vertex labelings of the three rankings are consistent and the following condition is satisfied: for each $c \in \ZZ_2^{n-1}$ and the pair of corresponding vertices $s_0 = \inc(c, 0 \to n)$ and $s_1 = \inc(c, 1 \to n)$ such that at least one of $s_0$ or $s_1$ is a sink (in $A_0$ or $A_1$, respectively), we have $|h(s_0) - h(s_1)| = 1$.
\end{lemma}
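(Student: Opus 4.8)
The plan is to reduce the claim to a statement about a single integer-valued function on $\ZZ_2^{n-1}$ and then control its extrema. Fix the rank functions $g_0$ and $g_1$ of $A_0$ and $A_1$, and let $t$ be the global shift that the gluing $A_0 \nearrow_n A_1$ applies to $A_1$ (chosen so that the base vertices $z_0, z_1$ satisfy $h(z_1) = h(z_0) + 1$). Define $\delta(c) = h(s_1) - h(s_0) = g_1(c) - g_0(c) + t$ for each $c \in \ZZ_2^{n-1}$, so that $\delta(\overrightarrow{0_{n-1}}) = 1$. As noted in the paragraph preceding the lemma, the gluing $A_0 \nearrow_n A_1$ is a genuine $(n,k)$-ranking exactly when every color-$n$ edge spans one rank, i.e.\ when $|\delta(c)| = 1$ for all $c$; the forward direction of the lemma is then immediate, since it only asserts this equality on the sub-collection of sink-pairs. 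So the whole content is the converse: assuming consistency of colors and labels and that $|\delta(c)| = 1$ whenever $s_0$ is a sink of $A_0$ or $s_1$ is a sink of $A_1$, I must deduce $|\delta(c)| = 1$ for every $c$.

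First I would record two elementary facts. Since $A_0$ and $A_1$ are rankings of the same connected graph $I_c^{n-1}$, adjacent vertices have ranks differing by exactly $1$ in each, so for neighbors $c, c'$ (differing in one bit $\neq n$) both $g_0(c') - g_0(c)$ and $g_1(c') - g_1(c)$ lie in $\{-1, +1\}$; hence $\delta(c') - \delta(c) \in \{-2, 0, +2\}$. Moreover, because a rank function on a bipartite graph has a fixed parity on each side of the bipartition, $g_1 - g_0$ is constant modulo $2$, so $\delta$ is constant modulo $2$; as $\delta(\overrightarrow{0_{n-1}}) = 1$, the function $\delta$ is odd everywhere and in particular never $0$. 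Thus it suffices to prove the two bounds $\max_c \delta(c) \le 1$ and $\min_c \delta(c) \ge -1$, which together with oddness force $\delta(c) \in \{-1, 1\}$.

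The heart of the argument is an extremal propagation, which I expect to be the main obstacle. Let $M = \max_c \delta(c)$ and $S = \{c : \delta(c) = M\}$; note $M \ge \delta(\overrightarrow{0_{n-1}}) = 1$. For $c \in S$ and a neighbor $c'$, the case $g_1(c') - g_1(c) = +1$ and $g_0(c') - g_0(c) = -1$ is impossible (it would give $\delta(c') = M + 2$), so whenever a neighbor increases $g_1$ it must also increase $g_0$, and then $\delta(c') = M$, i.e.\ $c' \in S$. Now suppose for contradiction that no $c \in S$ has $s_1$ a sink of $A_1$; then every $c \in S$ has a neighbor $c'$ with $g_1(c') = g_1(c) + 1$, and the previous sentence forces $c' \in S$. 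Iterating produces values of $g_1$ that increase without bound inside the finite set $S$, a contradiction. Hence some $c \in S$ has $s_1$ a sink of $A_1$; this is a sink-pair, so the hypothesis gives $|\delta(c)| = 1$, and since $\delta(c) = M \ge 1$ we conclude $M = 1$. Applying the identical argument to $-\delta = h(s_0) - h(s_1)$ (where the critical configuration is now $s_0$ a sink of $A_0$) yields $\min_c \delta(c) \ge -1$. This establishes both bounds and completes the proof.
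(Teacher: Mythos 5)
Your proof is correct, and at its core it runs on the same engine as the paper's: follow rank-raising (covering) edges in one factor, dragging the partner vertex along the same color, until a sink is reached, exploiting the fact that the gap $h(s_1)-h(s_0)$ changes by $0$ or $\pm 2$ across any such step. But the organization is genuinely different, and the difference matters. The paper argues by contradiction from a single violating pair: assuming (``without loss of generality'') $h(s_0) > h(s_1)$, it raises $s_0$ repeatedly until one vertex of the pair is a sink, and the accumulated rank differences contradict the sink-pair hypothesis. You instead run a global extremal argument on $\delta(c) = h(s_1)-h(s_0)$: the set of maximizers of $\delta$ is closed under raising $s_1$ (since a step raising $g_1$ but lowering $g_0$ would overshoot the maximum), hence must contain a pair with $s_1$ a sink of $A_1$, forcing $\max_c \delta = 1$; dually $\min_c \delta \geq -1$ via sinks of $A_0$. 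This buys two things. First, termination is transparent in your version ($g_1$ strictly increases inside a finite set), where the paper leaves the stopping step of its walk implicit. Second, and this is an actual improvement: your parity observation that $\delta$ is constant mod $2$ along the connected graph (hence odd everywhere, hence never $0$) disposes of the case $h(s_0)=h(s_1)$, which the paper's WLOG does not literally cover --- when $|h(s_0)-h(s_1)| \neq 1$ the difference could a priori be $0$, so neither strict inequality holds, and it is exactly a parity argument like yours (or a parity count along the paper's walk) that rules this configuration out. The paper's walk-from-a-violation is marginally shorter; your extremal formulation isolates the only two facts used (every edge changes each rank function by exactly $1$; a non-sink can always be raised) and is airtight at the one point where the paper is glib.
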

\begin{proof}
These are clearly both necessary conditions. It obviously suffices if the adjacency condition $|h(s_0) - h(s_1)| = 1$ were checked over all $c$ for all pairs $s_0$ and $s_1$ in $A$ corresponding to the same $c$. It remains to show that checking the situations where at least one $s_i$ is a sink (in their respective $A_i$) is enough.

Suppose we had a situation where checking just these pairs were not enough. This means for all pairs of vertices corresponding to the same $c$ (where at least one vertex is a sink in its respective $A_i$) we meet the adjacency condition, but for some such pair where neither are sinks, we have $|h(s_0) - h(s_1)| \neq 1$. Let $(s_0^{(0)} = s_0, s_1^{(0)} = s_1)$ be such a pair. Without loss of generality, assume $h(s_0) > h(s_1)$. Since $s_0$ is not a sink in $A_0$, there is some $s_0^{(1)}$ covering $s_0^{(0)}$ by an edge with some color $i \neq n$. Similarly, let $s_1 = s_1^{(0)}$ be connected to $s_1^{(1)}$ via color $i$. Continuing this process, we eventually must come to a pair of vertices $s_0^{(l)}$ and $s_1^{(l)}$ where at least one is a sink. However, $h(s_0^{(l)}) = h(s_0) + l > h(s_1^{(l)})$. Since we assumed that $h(s_0) > h(s_1)$ and $|h(s_0^{(l)}) - h(s_1^{(l)})| = 1$, the only way for these equations to be possible is if for each $i < l$, we had $h(s_1^{(i+1)}) = h(s_1^{(i)}) + 1$. But this meant $|h(s_0^{(0)}) - h(s_1^{(0)})| = 1$ in the first place, a contradiction. Thus, these situations do not exist, and it suffices to only check pairs where at least one vertex is a sink.
\end{proof}

Lemma~\ref{lem:cutting hanging gardens} makes the following algorithm possible:


\begin{framed}
\begin{enumerate}
\item For the data structure, represent all rankings $A$ by a set of sinks $S(A)$ and their ranks as in Theorem~\ref{thm:hanging gardens}. 
\item Start with $R(I_c^1)$, which is a set of $2$ rankings.
\item Given a set rankings in $R(I_c^{n-1})$, iterate over all pairs of (possibly identical) rankings $(A, B)$ in $R(I_c^{n-1}) \times R(I_c^{n-1})$. For each pair,
\begin{enumerate}
\item Consider the ranking $B'$ which is identical to $B$ except with the rank function $h_{B'}(\overrightarrow{0}) = h_B(\overrightarrow{0}) + 1$.
\item For each sink $s \in S(A) \cup S(B')$, check that $|h_A(s) - h_{B'}(s)| = 1$. 
\item If the above is satisfied for all $s$, put $A \nearrow_n B'$ in  $R(I_c^n)$.
\end{enumerate}
\end{enumerate}
\end{framed}

We used the above algorithm to compute the results for $n \leq 5$, which we include in Table~\ref{table:counting cubes} along with the counts of dashings (recall this is $o(n) = 2^{2^n-1}$) and adinkras (which we obtain by multiplying $|R(I_c^n)|$ and $o(n)$ as the dashings and rankings are independent). Finding the answer for $n = 6$ seems intractible with an algorithm that is at least linear in the number of solutions. For chromotopologies other than $R(I_c^n)$ that can be decomposed, Lemma~\ref{lem:cutting hanging gardens} still allows us to perform some similar computations. However, doing a case-by-case analysis for different chromotopologies seems uninteresting without unifying rules.

\begin{table}
\begin{center}$
\begin{array}{|l|l|l|l|}
\hline 
n & \text{dashings} & \text{rankings} & \text{adinkras}\\ 
\hline
1 & 2 &  2 & 4 \\
2 & 8 & 6 & 48 \\ 
3 & 128 & 38 & 4864 \\ 
4 & 32768 & 990 & 32440320 \\
5 & 2147483648 & 395094 & 848457904422912 \\
\hline
\end{array}
$\end{center}
\caption{Enumeration of dashings, rankings, and adinkras with chromotopology $I_c^n$. \label{table:counting cubes}}
\end{table}

\section{Back to Physics}
\label{sec:together}

So far, we mostly discussed pure combinatorics in our discussions about chromotopologies, dashings, and rankings. We now revisit the original physics context. We survey the recent papers but also examine some of the foundational questions. In particular, we suggest a definition of isomorphism for adinkraic representations, a notion that has not yet been rigorously treated in the literature.

\subsection{Constructing Representations}
\label{sec:construction}

Take an adinkra $A$, and consider the component fields (the bosons $\phi$ and the fermions $\psi$) as a basis. Then, consider a set of matrix generators $\{\rho(Q_i)\}$ in that basis, where $\rho(Q_i)$ is the adjacency matrix of the subgraph of $A$ induced by the edges of color $i$.  If we order all the $\phi$ to come before all the $\psi$ in the row/column orderings, these matrices are block-antisymmetric of the form
$$\rho(Q_i) = \begin{pmatrix}0 & L_i \\ R_i & 0\end{pmatrix},$$
where the $L_i$ and $R_i$ are \cite{gates:genomics}'s \emph{garden matrices}. For the adinkra in Figure~\ref{fig:2cube}, we have the following matrices, where the row/column indices are in the order $00, 11, 10, 01$.
$$\rho(Q_1) = \begin{pmatrix}0 & 0 & 1 & 0 \\ 0 & 0 & 0 & 1 \\ 1 & 0 & 0 & 0 \\ 0 & 1 & 0 & 0 \end{pmatrix} \hspace{1 cm} \rho(Q_2) = \begin{pmatrix} 0 & 0 & 0 & 1 \\ 0 & 0 & -1 & 0 \\ 0 & -1 & 0 & 0 \\ 1 & 0 & 0 & 0 \end{pmatrix}$$
\begin{figure}[htb]
\begin{center}

\begin{tikzpicture}[scale=0.10]
\SetUpEdge[labelstyle={draw}]
\Vertex[x=0,y=0]{00}
\Vertex[x=10,y=10]{10}
\Vertex[x=-10,y=10]{01}
\Vertex[x=0,y=20]{11}
\Edge[color=red, style=dashed](10)(11)
\Edge[color=red](00)(01)
\Edge[color=green](00)(10)
\Edge[color=green](01)(11)
\end{tikzpicture} 

\caption{An adinkra with chromotopology $I^2$. \label{fig:2cube}}
\end{center}
\end{figure}
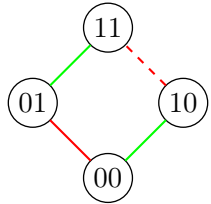

So far, we have encoded the graph and the dashing into the matrices, but we do not yet have a representation of the supersymmetry algebra $\po{1}{N}$. In fact, the $\rho(Q_i)$ form a representation of the Clifford algebra $\cl(N)$, which we discuss further in Section~\ref{sec:clifford representations}. The missing information (up to scalars) is the ranks of the vertices, which we can add into these matrices by adding the Hamiltonian operator $H$'s to appropriate entries (recall Section~\ref{sec:physical motivation} for details). In this sense, we are partitioning the infinite-dimensional basis of the representation into finite-dimensional ``slices,'' each slice corresponding to a single finite-dimensional representation corresponding to our finite-dimensional matrices.

An obvious question to consider whenever we study representations is the following:

\begin{open}
\label{que:irreducible}
Which adinkratic representations are irreducible? 
\end{open}

In the valise case, this is well-understood (see \cite{d2l:clifford}) with a surprising answer. If $L$ were not a maximal subspace inside $\ZZ_2^n$, we may quotient $I_c^n/L$ further to give a subrepresentation. Thus, irreducible valise adinkratic representations must have maximal doubly-even codes, which are self-orthogonal. There seems to be no good general method for other rankings. The intuition of the obstruction is that this method of creating subrepresentations require the vertices in each coset to come from the same rank, corresponding to the same engineering dimension. This kind of physics constraint is intricately connected to the selection of the right notion of isomorphism for adinkratic representations, which we now discuss.

\subsection{When are Two Adinkras Isomorphic?}
\label{sec:isomorphism}

A natural problem in considering representations is to selecting the right definition of isomorphism. The instinct for this choice seems to be completely intuitive for the authors of the literature (see \cite{gates:genomics} and \cite{douglas}), but this may be the first formal discussion.

\begin{open}
\label{que:isomorphism classes}
What is the right definition of ``isomorphism'' for two adinkraic representations? How does it relate to the combinatorics of adinkras?
\end{open}

We usually consider two representations isomorphic if they are conjugate by some change of basis. However, because of our physics context we need more restrictions. To find the right notion, we now recall/define three types of transformations and discuss what it means for them to give the ``same'' adinkra.
\begin{itemize}
\item Recall that a \emph{vertex switching} changes the dashing of all edges adjacent to a vertex. This corresponds to simply changing the sign (as a function) of the component field corresponding to that vertex, or equivalently, conjugation of the representation by a diagonal matrix of all $1$'s except for a single $(-1)$. It is reasonable to consider this move as an operation that preserves isomorphism.
\item Let a \emph{color permutation} permute the names of the colors (in the language of codes, it is a simultaneous column permutation of the bitstrings corresponding to each vertex). In our situation, this is just a shuffling of the generators, so at first glance it is reasonable to consider this operation to preserve isomorphism. The existing literature, e.g. \cite{d2l:clifford}, seems to do so as well. However, this is not quite what we want in a natural definition, where we need to consider the base ring fixed. By analogy, consider the $k[x,y]$-modules $k[x,y]/(x)$ and $k[x,y]/(y)$, which may look  ``equivalent'' (they are indeed isomorphic as algebras) but are not isomorphic as modules. They should not be: we really want $A \oplus B$ to be isomorphic to $A \oplus B'$ if $B$ and $B'$ were isomorphic, but the direct sums $k[x,y]/(x) \oplus k[x,y]/(x)$ and $k[x,y]/(x) \oplus k[x,y]/(y)$ are not isomorphic in any reasonable way. 

In fact, the existing adinkra literature notices this problem when considering disconnected adinkras (i.e. adinkras with topology of a disconnected graph). These graphs correspond to direct sums of representations of single adinkras. However, since a color permutation is done over all disjoint parts simultaneously, if we consider color permutations as operations that preserve isomorphism, we obtain situations where $A \cong C$ and $B \cong D$, but $A \oplus B \not \cong C \oplus D$. The literature deals with this situation by calling color permutations \emph{outer isomorphisms}. We believe the correct thing to do is to just to {\bf not} consider these situations isomorphic and treat them as a separate kind of similarity.

\item Let a \emph{vertex permutation} permute the vertex labels of an adinkra $A$. This corresponds to conjugating the matrices $\rho(Q_i)$ by permutation matrices. Here it makes sense to impose further physics constraints: we want these transformations to preserve the engineering dimensions of the component fields. This prevents us from allowing arbitrary vertex permutations and simply considering two adinkraic representations isomorphic if they're conjugate. On the adinkras side, this corresponds to us enforcing that the rank function of $A$ be preserved under any vertex permutation (in particular, bosons must go to bosons, and fermions to fermions). Happily, this neatly corresponds to the natural definition of isomorphism for ranked posets.
\end{itemize}

When we have a combinatorial representation of a physics situation, there are two ways our brains naturally want to define \emph{isomorphism}, one using the physics intuition (which is the ``right'' one but harder to see), and one using the combinatorial intuition (which is ``wrong'' but easier to see).

\begin{itemize}
\item Following physics requirements, we define two adinkras $A$ and $B$ to be \emph{isomorphic} if there is some matrix $R$ that transforms each generator $\rho(Q_I)$ of $A$ to the corresponding $\rho(Q_I)$ in $B$ via conjugation, with the stipulation that such a conjugation preserves the ranks of the component fields. To be explicit, let the component fields be partitioned into $P_1 \cup P_2 \cup \cdots$, where each $P_i$ contains all $\psi_j$ or $\phi_j$ of some rank (equivalently, engineering dimension). We require $M$ to be block-diagonal with respect to this partition.
\item Following combinatorial intuition, we define two adinkras to be \emph{$C$-isomorphic} if there is a sequence from one to the other via only vertex switchings or ranked poset isomorphisms. Note that from our earlier discussion, both of these operations preserve isomorphism, so $C$-isomorphism is more restrictive than isomorphism\footnote{In turn, our \emph{isomorphism} is more restrictive than a physics constraint put forth in \cite{gates:genomics}, which can be restated as the requirement that $M$ must be block-diagonal on $\Phi \cup \Psi$, the partition into bosons and fermions (this is coarser than the partition into the $P_i$).}. 
\end{itemize}

In a perfect world these two notions would exactly match. We're not so lucky here: the adinkra topology is an invariant of the operations in the definition of $C$-isomorphism, but there are adinkras with different topologies that correspond to isomorphic representations; see \cite[Examples 4.2, 4.5]{d2l:clifford}. We still do not have a complete picture of the nuances between the two definitions.

$C$-isomorphism is studied in more detail in \cite{douglas} (where it is simply called ``isomorphism''), which gives a deterministic algorithm to tell if two adinkras are $C$-isomorphic. Similar discussion relevant to isomorphism (even though it was not defined as such) can be found in \cite{gates:genomics} and \cite{d2l:clifford}. We believe a lot of interesting mathematics remain in the area. As a potential example, both \cite{gates:genomics} and \cite{douglas} distinguish adinkras with the help of what amounts to the trace of the matrix 
$$ \rho(Q_1) \rho(Q_2) \cdots \rho(Q_N) $$
after multiplying by the matrix $\begin{pmatrix} I & 0 \\ 0 & -I \end{pmatrix}. $ We would like to point out that this is precisely the well-known \emph{supertrace} from the theory of superalgebras. 

Finally, we can rephrase our notion of \emph{isomorphism} with the language of quivers. Our isomorphism classes are exactly isomorphism classes of quiver representations with each node corresponding to one of the partitions $P_i$. For example, a valise representation (which corresponds to a $2$-partition) for $I_c^2$ cam be identified with the quiver from Figure~\ref{fig:quiver}, where the $2$ nodes correspond to the bipartition $B \cup F$ of bosons and fermions. Each pair of edges of a color, say $i$, corresponds to the off-diagonal block matrices $M$ and $M'$ that arise when we write each $Q_i$ as $\begin{pmatrix} 0 & M \\ M' & 0 \end{pmatrix}$ corresponding to the bipartition. In general, if $B$ is further partitioned into $m_1$ parts and $F$ into $m_2$ parts, we would obtain $(m_1 + m_2)$ vertices in total and $2m_1 m_2$ edges of each color. 

\begin{figure}[htb]
\begin{center}

\begin{tikzpicture}[scale=0.15]
\SetUpEdge[labelstyle={draw}]
\Vertex[x=0,y=0]{B}
\Vertex[x=15,y=0]{F}
\Edge[style={->, bend left=10, color=red}](B)(F)
\Edge[style={<-, bend right=10, color=red}](B)(F)
\Edge[style={->, bend left=40, color=green}](B)(F)
\Edge[style={<-, bend right=40, color=green}](B)(F)
\end{tikzpicture} 

\caption{The quiver corresponding to a valise adinkra for $I_c^2$. \label{fig:quiver}}
\end{center}
\end{figure}
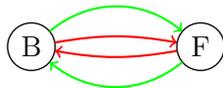

\subsection{Clifford Representations}
\label{sec:clifford representations}

In Section~\ref{sec:physical motivation}, we called the adinkraic representations arising from valise adinkras ``Clifford supermultiplets.'' This is no big surprise -- when we ignore the Hamiltonian $H$ in the defining relations 
$$\{Q_I, Q_J\} = 2\delta_{IJ}H,$$ 
we get precisely the Clifford algebra relations 
$$\{Q_I, Q_J\} = 2\delta_{IJ}.$$ 

In other words, when we forget about the rank of an adinkra and look at only the dashed chromotopology (alternatively, the valise, where no bosons or fermions are privileged by rank from the other fields of the same type), we are really looking at a Clifford algebra representation, something that we saw in \ref{sec:construction} and in the proof of Theorem~\ref{thm:adinkraizable as quotient}. Therefore, we can think of adinkraic representations as extensions of representations of the Clifford algebra, a well-known subject (see \cite{atiyah} or \cite{lawson}). \cite{d2l:supermodules} makes this analogy more rigorous by realizing these representations as filtered Clifford supermodules. 

While Clifford algebras are well-understood, the following is a natural question to ask:
\begin{open}
Can adinkras give us better intuition (organizational or computational) about the theory of Clifford representations?
\end{open}

In \cite{d2l:clifford}, each valise adinkra with the $I^n_c$ chromotopology is used to explicitly construct a representation of the Clifford algebra $\cl(n)$. This introduces a plethora of representations with lots of isomorphisms between them -- after all, there are at most $2$ irreducible representations for each Clifford algebra over $\RR$.

\subsection{Extensions}
\label{sec:extensions}
As we brushed over in Section~\ref{sec:physical motivation}, the adinkraic representations correspond to the $1$-dimensional (more precisely, $(1,0)$-dimensional) worldline situation with $\N{}$ supercharge generators. We talk about the more general context in this section. Helpful expositions of related concepts are \cite{freed} and \cite{varadarajan}.

In general, we are interested in some $(1+q)$-dimensional vector space over $\RR$ with Lorentzian signature $(1,q)$. Besides our $(1,0)$ situation, some examples are $(1,1)$ (\emph{worldsheet}) and $(1,3)$ (\emph{Lorentzian spacetime}). We can write this more general situation as $(1,q|\N{})$-supersymmetry\footnote{Here is another unfortunate source of language confusion: for physicists, $\N{}$ means the number of supersymmetry generators, whereas mathematicians would instead count the total number of dimensions and write $d\N{}$ instead of $\N{}$, where $d$ is the real dimension of the minimal spin-($1/2$) representation of $\RR^{1+q}$. These minimal dimensions are $1,1,2,4,8,\ldots$ starting with $n=1$. Luckily, for most of this paper, $d=1$ and we have no problems. A clear explanation is given in \cite{freed}.} We would then call the corresponding superalgebra $\po{1+q}{\N}$, which specializes to the particular superalgebra $\po{1}{\N{}}$ we have been working with when $q = 0$.

In the case where $(1+q) = 2, 6 \pmod{8}$, we actually get two different types of supercharge generators (this again corresponds to the fact that there are two Clifford algebra representations over $\RR$ in those situations), so we can partition $\N = P+Q$ and call these situations $(1,q|P,Q)$-supersymmetry.

\begin{open}
What happens when we look at $q>0$? What kind of combinatorial objects appear? Will the machinery we developed for adinkraic representations in the wordline case be useful? 
\end{open}

\cite{gates:dimensional_extension} examines the $(1,1)$-case, where the combinatorics get more complex. The reader may have gotten the intuition that the dashings and rankings are fairly independent conditions of the adinkra. This is true for the $(1,0)$-case but no longer holds for the $(1,1)$-case, where certain forbidden patterns arise that depend both on the dashings and rankings. \cite{hubsch:weaving} creates $(1,1|P,Q)$ representations by tensoring and quotienting worldline representations, similar in spirit to the construction of representations of semisimple Lie algebras.

In a different direction, \cite{faux:dimensional_enhancement} and \cite{faux:spin_holography} examine which worldline representations can be ``shadows'' of higher-dimensional ones and give related consistency-tests and algorithms. As worldline representations are involved, the $1$-dimensional theory already built plays an instrumental role.

\section{Acknowledgements}
First and foremost, we thank Sylvester ``Jim'' Gates for teaching the subject to us. We thank Brendan Douglas, Greg Landweber, Kevin Iga, Richard Stanley, Joel Lewis, and Steven Sam for helpful discussions. Anatoly Preygel and Nick Rozenblyum gave us very enlightening lessons in algebraic topology and made the relevant sections possible. Alexander Postnikov made the observation about quivers. We are especially grateful to Tristan H\"{u}bsch for his unreasonably generous gift of time and patience through countless communications. 

The author was supported by an NSF graduate research fellowship.

\appendix

\section{Clifford Algebras}
\label{app:clifford}

The \emph{Clifford algebra} is an algebra $\cl(n)$ with generators $\gamma_1, \ldots, \gamma_n$ and the anticommutation relations 
\[
\{\gamma_i, \gamma_j\} = 2\delta_{i,j} \cdot 1.
\]

The Clifford algebra can be defined for any field, but we will typically assume $\RR$. There are also more general definitions than what we give, though we won't need them for our paper. For references, see \cite{atiyah} or \cite{deligne}.

We can associate an element of the Clifford algebra to any $n$-bitstring $b = b_1 b_2 \cdots b_n$, by defining
\[
\clif(b) = \prod_i \gamma_i^{b_i},
\]
where the product is taken in increasing order of $i$. Call these elements \emph{monomials}. The $2^n$ possible monomials form a basis of $\cl(n)$ as a vector space, and the $2^{n+1}$ signed monomials $\pm \clif(b)$ form a multiplicative group $\smon(n)$, or just $\smon$ when the context is clear. It is easy to see that two signed monomials of degrees $a$ and $b$ commute if and only if $ab = 0 \pmod{2}$, and one could equivalently define Clifford algebras as commutative superalgebras with odd and even parts generated by the odd and even degree monomials, respectively.

The following facts are needed for Proposition~\ref{prop:codes}:

\begin{lemma}
\label{lem:clifford commutation}
The image of a code $L$ under $\clif$ is commutative if and only if for all $a, b \in L$,
$$(a \cdot b) + \wt(a)\wt(b) = 0 \pmod{2}.$$
\end{lemma}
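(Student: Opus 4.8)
The plan is to reduce the statement to a single sign computation for a pair of monomials. A set of elements of $\cl(n)$ is commutative precisely when its elements commute pairwise, and the relevant elements here are the $\clif(a)$ for $a \in L$; so I would fix two codewords $a, b \in L$ and determine the scalar $\epsilon \in \{\pm 1\}$ for which $\clif(a)\clif(b) = \epsilon\,\clif(b)\clif(a)$. The image of $L$ under $\clif$ is commutative if and only if $\epsilon = 1$ for every such pair. Rather than invoke the parity heuristic from the appendix, I would compute $\epsilon$ directly from the defining relations, which keeps the argument self-contained and handles overlapping supports correctly.

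To compute $\epsilon$, I would write $\clif(a) = \prod_{i \in S_a} \gamma_i$ and $\clif(b) = \prod_{j \in S_b} \gamma_j$, where $S_a, S_b \subseteq [n]$ are the supports of $a$ and $b$ and each product is taken in increasing order, and then move the block $\clif(b)$ leftward past the block $\clif(a)$ using only $\gamma_i\gamma_j = -\gamma_j\gamma_i$ for $i \neq j$ and $\gamma_i^2 = 1$. Since the block move preserves the internal order of each factor, the only sign contributions come from the cross-pairs: each ordered pair $(i,j)$ with $i \in S_a$ and $j \in S_b$ is crossed exactly once, contributing $-1$ when $i \neq j$ and $+1$ when $i = j$ (a generator commutes with itself, so the diagonal pairs produce no sign). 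Hence $\epsilon = (-1)^M$, where $M = \#\{(i,j) : i \in S_a,\ j \in S_b,\ i \neq j\} = |S_a|\,|S_b| - |S_a \cap S_b|$.

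Finally I would translate this count back into the language of codes. Here $|S_a| = \wt(a)$, $|S_b| = \wt(b)$, and $|S_a \cap S_b|$ is the number of coordinates in which both $a$ and $b$ equal $1$, which reduces mod $2$ to the dot product $a \cdot b$ in $\ZZ_2^n$. Thus $M \equiv \wt(a)\wt(b) - (a \cdot b) \equiv \wt(a)\wt(b) + (a \cdot b) \pmod 2$, so $\clif(a)$ and $\clif(b)$ commute if and only if $M$ is even, i.e. if and only if $(a \cdot b) + \wt(a)\wt(b) \equiv 0 \pmod 2$. Ranging over all pairs $a, b \in L$ then yields the lemma.

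I expect the only delicate point to be the bookkeeping in the middle step: justifying that passing the block $\clif(b)$ through $\clif(a)$ contributes exactly one factor per cross-pair, and correctly isolating the diagonal pairs $i = j$ (where the shared generator contributes $+1$ rather than an anticommutation sign). Once that count is pinned down, the reduction mod $2$ and the identification of $|S_a \cap S_b|$ with the dot product are routine.
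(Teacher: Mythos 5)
Your proposal is correct and is essentially the paper's own argument: the paper likewise moves the $\gamma_j$'s of $\clif(b)$ through $\clif(a)$, counts $\wt(a)\wt(b)$ transpositions, and corrects for the shared generators (your diagonal pairs $i=j$) to arrive at exactly $(a \cdot b) + \wt(a)\wt(b)$ sign factors mod $2$. Your cross-pair bookkeeping, with $M = |S_a||S_b| - |S_a \cap S_b|$, is just a slightly more explicit packaging of the same sign computation, and it is handled correctly.
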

\begin{proof}
Finally, consider $\clif(a) = \gamma_{a_1} \ldots \gamma_{a_r}$ and $\clif(b) = \gamma_{b_1}\ldots \gamma_{b_s}$, where $r = \wt(a)$ and $s = \wt(b)$. Note we can get from $\clif(a)\clif(b)$ to $\clif(b)\clif(a)$ in $\wt(a)\wt(b)$ transpositions, where we move, in order $\gamma_{b_1}, \cdots, \gamma_{b_s}$ through $\clif(a)$ to the left, picking up exactly $\wt(a)$ powers of $(-1)$. However, we've also overcounted once for each time $a$ and $b$ shared a generator $\gamma_i$, since $\gamma_i$ commutes with itself. Therefore, we have exactly $(a \cdot b) + \wt(a)\wt(b)$ powers of $(-1)$. The condition for commutativity is then that this quantity be even for all pairs $a$ and $b$, which is equivalent to the second defining condition for dashing codes.
\end{proof}

\begin{prop}
\label{prop:clifford weird group}
A code $L$ is a dashing code if and only if $L$ has the property that for a suitable sign function $s(v) \in \{\pm 1\}$ with $s(\overrightarrow{0}) = 1$, the set $\smon_L = \{s(v) \clif(v) \mid v \in L\}$ form a subgroup of $\smon$.
\end{prop}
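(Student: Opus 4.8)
The plan is to prove both directions by translating the group-theoretic condition on $\smon_L$ into the two defining conditions of a dashing code, using the elementary identity $\clif(v)^2 = (-1)^{\binom{\wt(v)}{2}}$ together with the observation that $\binom{r}{2}$ is even precisely when $r \equiv 0,1 \pmod 4$. Writing $\clif(a)\clif(b) = \epsilon(a,b)\clif(a+b)$ for a sign $\epsilon(a,b) \in \{\pm 1\}$ (addition of bitstrings being in $\ZZ_2^n$), I would first record the key reformulation: closure of $\smon_L = \{s(v)\clif(v) \mid v \in L\}$ under multiplication is equivalent to the cocycle-type identity $s(a)s(b)\epsilon(a,b) = s(a+b)$ for all $a,b \in L$. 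Indeed $a+b \in L$ since $L$ is linear, distinct bitstrings yield distinct monomials, so $s(a+b)\clif(a+b)$ is the unique element of $\smon_L$ whose monomial part is $\clif(a+b)$; and a nonempty finite subset of a group closed under multiplication is automatically a subgroup.

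For the forward direction, I would assume $\smon_L$ is a subgroup, so the identity above holds. Setting $b=a$ and using $s(0)=1$ gives $\epsilon(a,a)=1$; but $\epsilon(a,a) = \clif(a)^2 = (-1)^{\binom{\wt(a)}{2}}$, which forces $\wt(a) \equiv 0,1 \pmod 4$, the first dashing-code condition. Comparing the identity for the pair $(a,b)$ with that for $(b,a)$, and using $s(a+b)=s(b+a)$, forces $\epsilon(a,b)=\epsilon(b,a)$, i.e. $\clif(a)$ and $\clif(b)$ commute; by Lemma~\ref{lem:clifford commutation} this is exactly the second dashing-code condition. Hence $L$ is a dashing code.

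For the converse, I would assume $L$ is a dashing code and examine the full preimage $\smon_L^{\mathrm{full}} = \{\pm\clif(v) \mid v \in L\}$, which sits in the central extension $1 \to \{\pm 1\} \to \smon_L^{\mathrm{full}} \to L \to 1$. The second condition together with Lemma~\ref{lem:clifford commutation} makes all the $\clif(v)$ pairwise commute, so $\smon_L^{\mathrm{full}}$ is abelian; the first condition gives $\clif(v)^2 = (-1)^{\binom{\wt(v)}{2}} = 1$, so every element has order dividing $2$. Thus $\smon_L^{\mathrm{full}}$ is an elementary abelian $2$-group, i.e. an $\FF_2$-vector space, inside which $\{\pm 1\}$ is a one-dimensional subspace. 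Choosing any linear complement $\smon_L$ produces a subgroup meeting $\{\pm 1\}$ trivially and projecting isomorphically onto $L$; this complement is exactly $\{s(v)\clif(v) \mid v \in L\}$ for a sign function $s$, and since it contains the identity $1 = \clif(0)$ we obtain $s(0)=1$, as required.

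The conceptual heart, and the step I would be most careful to state cleanly, is the identification of the two dashing-code conditions with ``$\clif(v)^2 = 1$'' and ``the $\clif(v)$ commute''; once these are in hand, the converse reduces to the fact that a subspace of an $\FF_2$-vector space always splits off, which is precisely what guarantees the existence of the a priori mysterious sign function $s$. The only genuine computation is the sign $\clif(v)^2 = (-1)^{\binom{\wt(v)}{2}}$ and the congruence $\binom{r}{2} \equiv 0 \pmod 2 \iff r \equiv 0,1 \pmod 4$, both of which are elementary and can be verified directly.
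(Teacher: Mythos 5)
Your proof is correct. The forward direction is essentially the paper's argument, executed more carefully: both hinge on the computation $\clif(v)^2 = (-1)^{\wt(v)(\wt(v)-1)/2}$ (forcing $\wt(v) \equiv 0,1 \pmod 4$) and on Lemma~\ref{lem:clifford commutation} translating pairwise commutation of the $\clif(v)$ into the dot-product condition. Where the paper argues somewhat tersely that $-1 \notin \smon_L$ forces the squares to equal $1$, and that elements ``equal up to sign'' must commute, your cocycle identity $s(a)s(b)\epsilon(a,b) = s(a+b)$ makes both steps mechanical (set $b=a$ for the squares, using $s(\overrightarrow{0})=1$; compare $(a,b)$ with $(b,a)$ for commutativity), and it correctly isolates the one fact this relies on, namely that $\smon_L$ contains exactly one signed lift of each monomial. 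The genuine divergence is in the converse. The paper constructs $s$ explicitly: choose a basis $l_1,\ldots,l_k$ of $L$, set $s(l_i)=1$, and extend multiplicatively, with well-definedness guaranteed precisely because the $\clif(l_i)$ commute and square to $1$. You instead pass to the full preimage $\{\pm\clif(v) \mid v \in L\}$, note that the two dashing-code conditions make it an elementary abelian $2$-group (hence an $\FF_2$-vector space, using that $-1 \neq 1$ over $\RR$, so $\{\pm 1\}$ is a genuine one-dimensional subspace), and split off $\{\pm 1\}$ by choosing a linear complement, which projects bijectively onto $L$ and is of the form $\{s(v)\clif(v)\}$ with $s(\overrightarrow{0})=1$ since a subgroup contains the identity. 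These are two faces of the same splitting: the paper's basis construction exhibits one particular complement, while your argument shows abstractly that a complement must exist. Your version buys a cleaner conceptual statement (the central extension $1 \to \{\pm 1\} \to \smon_L^{\mathrm{full}} \to L \to 1$ splits because everything is an $\FF_2$-vector space) at the cost of being nonconstructive; the paper's version hands you the sign function concretely, which is what its application in Proposition~\ref{prop:dashing codes} actually uses, though either form suffices there.
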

\begin{proof}
Without loss of generality, say $\clif(v) = s(v) \prod_{i=1}^k \gamma_i$. Then
\begin{align*}
\clif(v)^2 & = (\gamma_1 \gamma_2 \cdots \gamma_{k}) (\gamma_1 \gamma_2 \cdots \Q_{k}) \\
& = (-1)^{(k-1)} (\gamma_2 \gamma_3 \cdots \gamma_{k}) (\gamma_1) (\gamma_1 \gamma_2 \cdots \gamma_{k}) \\
& = (-1)^{k(k-1)/2}.
\end{align*}
Suppose $s$ exists. Then, we must not have $(-1) \in \smon_L$ (since we already have $1 \in \smon_L$. Therefore, it is necessary to have the last quantity equal $1$, which happens exactly when $\wt(v) = 0$ or $1 \pmod{4}$ for all $v \in L$. Now, since $\clif(a)\clif(b)$ and $\clif(b)\clif(a)$ are equal up to sign, we must have them commute for all $a$ and $b$. By Lemma~\ref{lem:clifford commutation} this is the remainder of the requirement of $L$ being a dashing code. Thus, it is necessary for $L$ to be a dashing code for $s$ to exist.

If $L$ were a dashing code, then pick a basis $l_1, \ldots, l_k$ of $L$ and assign $s(l_i) = 1$ for all $i$. Note by the above equations $\clif(l_i)^2 = 1$ for all $i$. The linear independence of the $l_i$ is equivalent to saying that no group axioms are broken by this choice of $l_i$. Now, greedily define 
$$s(\prod_{i \in I} \clif(l_i)) = \prod_{i \in I} s(\prod(\clif(l_i)),$$
which is well-defined and closed under multiplication since the $l_i$ commute and square to $1$. 
\end{proof}



\section{Lattices}
\label{app:lattices}

For a reference, see \cite{stanley:ec1} or any other standard treatment of posets.

An \emph{order ideal} of a poset $P$ is a subset of elements $S \subset P$ such that if $s \in S$ and $s > t$, then $t \in S$. There is a bijection between the order ideals of a poset $P$ and the \emph{antichains} (sets of pairwise incomparable elements) of $P$, where each order ideal $I$ is mapped to the set of maximal elements in $I$. We define $J(P)$ to be the poset of order ideals of $P$, ordered by inclusion. 

The \emph{least upper bound} $x \vee y$ of $x$ and $y$ in a poset $P$ is an element $z$ (which must be unique if it exists) such that $z \geq x$ and $z \geq y$ and for all $z' \geq x$ and $z' \geq y$, $z' \geq z$. We can similarly define the \emph{greatest lower bound} $x \wedge y$ when we reverse all the directions in the previous definition. A poset is a \emph{lattice} if every pair of elements has a least upper bound and a greatest lower bound. A lattice is \emph{distributive} if the following hold:
\begin{align*}
x \vee (y \wedge z) & = (x \vee y) \wedge (x \vee z) \\
x \wedge (y \vee z) & = (x \wedge y) \vee (x \wedge z)
\end{align*}

It is routine to check that $J(P)$ is a distributive lattice. Furthermore:

\begin{theorem} [Fundamental Theorem for Finite Distributive Lattices]
\label{thm:ftfdl}
The map $J$ is a bijection between finite posets and finite distributive lattices.
\end{theorem}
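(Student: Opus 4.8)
The statement is Birkhoff's representation theorem, and the plan is to prove that $J$ is a bijection (understood on isomorphism classes) by exhibiting an explicit inverse. The inverse sends a finite distributive lattice $L$ to its poset $P(L)$ of \emph{join-irreducible} elements, where $p \in L$ is join-irreducible if $p$ is not the minimum element and $p = x \vee y$ forces $p \in \{x, y\}$; equivalently, in a finite lattice, $p$ covers exactly one element. I would establish the two round-trip isomorphisms $L \cong J(P(L))$ and $Q \cong P(J(Q))$ separately; together with the fact (noted as routine just before the statement) that $J(P)$ is always a finite distributive lattice, these show that $J$ and $P$ are mutually inverse, hence that $J$ is a bijection.

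For the first round trip, define $\eta\colon L \to J(P(L))$ by $\eta(x) = \{p \in P(L) : p \leq x\}$. First I would check that $\eta(x)$ is genuinely an order ideal of $P(L)$ and that $\eta$ is order-preserving. The content is showing that $\eta$ is an order-isomorphism. Injectivity reduces to the standard fact that in any finite lattice every element is the join of the join-irreducibles beneath it, so $\eta(x) = \eta(y)$ forces $x = \bigvee \eta(x) = \bigvee \eta(y) = y$, and the same identity shows $\eta$ reflects order.

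Surjectivity is the crux, and it is the one place where distributivity is essential. I would prove the key lemma that in a distributive lattice every join-irreducible $p$ is \emph{join-prime}: if $p \leq x \vee y$ then $p \leq x$ or $p \leq y$. This follows because $p = p \wedge (x \vee y) = (p \wedge x) \vee (p \wedge y)$ by distributivity, and join-irreducibility forces $p$ to equal one of the two meets, giving $p \leq x$ or $p \leq y$; a short induction extends this to arbitrary finite joins. Granting this, for any order ideal $I$ of $P(L)$ the element $x_I = \bigvee I$ satisfies $\eta(x_I) = I$: the inclusion $I \subseteq \eta(x_I)$ is clear, and join-primeness shows any join-irreducible $\leq x_I = \bigvee I$ lies below some element of $I$, hence (as $I$ is an ideal) lies in $I$.

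For the second round trip, I would identify the join-irreducibles of $J(Q)$: an order ideal of $Q$ is join-irreducible in $J(Q)$ exactly when it is a \emph{principal} ideal $\langle q\rangle = \{t \in Q : t \leq q\}$, since any non-principal ideal is the join (union) of its proper principal subideals. The map $q \mapsto \langle q \rangle$ is then an order-isomorphism $Q \to P(J(Q))$. This yields injectivity of $J$: if $J(P) \cong J(Q)$, then $P \cong P(J(P)) \cong P(J(Q)) \cong Q$. I expect the join-primeness lemma to be the main obstacle, since it is the sole use of the distributive hypothesis and is exactly the property that fails for general lattices (e.g. the pentagon $N_5$ or diamond $M_3$), which is why the correspondence collapses without distributivity.
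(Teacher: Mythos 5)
The paper offers no proof of Theorem~\ref{thm:ftfdl} at all: it is quoted as the classical Fundamental Theorem for Finite Distributive Lattices (Birkhoff's representation theorem), with the reader directed to \cite{stanley:ec1}, so there is no in-paper argument to compare against. Your proposal is the standard textbook proof of that theorem, and it is correct: the inverse $P(\cdot)$ taking a lattice to its poset of join-irreducibles, the map $\eta(x) = \{p \in P(L) : p \leq x\}$ with injectivity from the finite-lattice identity $x = \bigvee \eta(x)$, surjectivity via the join-primeness lemma (which is, as you say, the sole use of distributivity and exactly what fails in $N_5$ and $M_3$), and the identification of the join-irreducibles of $J(Q)$ with the principal ideals $\langle q \rangle$. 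A few small points to make explicit in a full write-up: the bijection must be read on isomorphism classes, as you correctly flag at the outset; when you argue that a non-principal ideal $I$ is join-reducible, reduce the many-term union to a binary join, e.g.\ $I = \langle q \rangle \vee I'$ where $q$ is one maximal element of $I$ and $I'$ is the ideal generated by the remaining maximal elements (binary join-irreducibility extends to finite joins by an easy induction, and unlike the $p \leq x \vee y$ direction this needs no distributivity); and handle the degenerate cases, namely that the empty ideal is the bottom of $J(Q)$ and so is rightly excluded from $P(J(Q))$, while $\eta$ sends the bottom of $L$ to the empty ideal since join-irreducibles are by definition not minimal. With those details filled in, your two round-trip isomorphisms $L \cong J(P(L))$ and $Q \cong P(J(Q))$ do establish the theorem as stated.
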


\bibliographystyle{plain}
\bibliography{adinkras}

\end{document}